\newtheorem{theorem}{Theorem}[section]
\newtheorem{lemma}{Lemma}[section]
\newtheorem{proposition}{Proposition}[section]
\theoremstyle{definition}
\newtheorem{definition}{Definition}[section]
\theoremstyle{remark}
\numberwithin{equation}{section}
\def\f{\frac}
\def\hf1{^\f{1}{1-\xi^2}}
\def\be{\begin{equation}}
\def\en{\end{equation}}
\def\bs{\begin{split}}
\def\es{\end{split}}
\def\ba{\begin{align}}
\def\ea{\end{align}}
\author[Z. Qiu]{Zhaoyang Qiu}
\address{School of Mathematics and Statistics, Huazhong University of Science and Technology, Wuhan, 430074, China.}
\email{tangyb@hust.edu.cn. zhqmath@163.com}
\author[Y. Tang]{Yanbin Tang}
\title[strong pathwise solution and LDP to the stochastic Boussinesq equations]
{Strong pathwise solution and large deviation principle for the stochastic Boussinesq equations with partial diffusion term}
\keywords{Stochastic Boussinesq equations, strong pathwise solution, large deviation principle, global existence, stochastic compactness.}
\subjclass[2010]{35Q35, 76D05, 60H15, 60F10}
\date{\today}
\begin{document}
\begin{abstract}
We establish the existence and uniqueness of local strong pathwise solutions to the stochastic Boussinesq equations with partial diffusion term forced by multiplicative noise on the torus in $\mathbb{R}^{d},d=2,3$. The solution is strong in both PDE and probabilistic sense. In the two dimensional case, we prove the global well-posedness of strong pathwise solutions to the system  with large initial data forced by additive noise. After the global existence and uniqueness of strong solutions are established, the large deviation principle (LDP) is proved by the weak convergence method.  The weak convergence is shown by a tightness argument in the appropriate functional space.
\end{abstract}

\maketitle
\section{Introduction}

The Boussinesq equations are widely considered as the fundamental model for the study of large scale atmospheric and oceanic flows, built environment, dispersion of dense gases and internal dynamical structure of stars, which also retain some features of Navier-Stokes and Euler equations, see \cite{Busse,Gibbon,Pedlosky} for further background. The PDEs perturbed randomly are considered as a primary tool in the modeling of uncertainty, especially while describing fundamental phenomenon in physics, climate dynamics, communication systems and gene regulation systems. Hence, the study of the well-posedness and dynamical behaviour of PDEs subjected to the noise which is largely applied to the theoretical and practical areas has drawn a lot of attention. In this paper we consider the following stochastic Boussinesq equations with partial diffusion term driven by multiplicative noise:
\begin{eqnarray}\label{Equ1.1}
\left\{\begin{array}{ll}
du-\Delta udt+(u\cdot \nabla)udt+\nabla \pi dt=\theta e_{d}dt+f(u,\theta)d\mathcal{W},\\
d\theta+(u\cdot\nabla)\theta dt=0,\\
\nabla\cdot u=0,\\
\end{array}\right.
\end{eqnarray}
where  $u=(u_{1},\cdots,u_{d}), d=2,3$,  $\pi$ and $\theta$ denote the velocity, pressure and temperature, respectively;
$e_{2}=(0,1),e_{3}=(0,0,1)$, $\mathcal{W}$ is a Q-Wiener process that will be introduced in Section 2. The initial conditions are random variables $u(0,\cdot)=u_{0}(\omega, x),~~ \theta(0,\cdot)=\theta_{0}(\omega, x)$ with sufficient spatial regularity introduced later. We focus on the periodic boundary conditions, with the spatial domain being the torus $\mathbb{T}^{d}=(-\pi,\pi)^{d}, d=2,3$. In short, the Boussinesq equations model the interaction between the incompressible fluid flow and thermal dynamics. We will refer to the first and second equations in system \eqref{Equ1.1} as equations of momentum and temperature, respectively. For the deterministic Boussinesq equations, there have been some existence and regularity results in \cite{Yzhuan,Ye,Zhuan} for the full viscous case and in \cite{Chae} for the partial viscous case. The well-posedness of solutions in two dimensions with noise driven by the cylindrical Wiener process was given in \cite{Trenchea} for strong solution, in \cite{Chueshov} for global solutions that are weak in PDE sense and strong in probability sense, in \cite{Brzezniak, Yamazaki} for martingale solutions, and in \cite{Alonso, du} for maximal strong solutions, especially \cite{du} established the global existence of strong solution where system was driven by non-degenerate multiplicative noises which provided a regularizing effect, the initial data was also required to be sufficiently small.
For examples of results on the well-posedness of solutions to the system driven by other types of noise such as fractional Brownian motion or L\'evy noise we refer the reader to \cite{Bessaih, Huang1, Huang2, HuangLi}. If $\theta=0$, the system \eqref{Equ1.1} reduces to the stochastic Navier-Stokes equations, for which substantial developments have been made in recent years, see for example, \cite{Temam,Millet,Breit,Gibbon,Flandoli2,Glatt,Chueshov,Rozovskii,Masmoudi,Zhang,Sundar,DWang}. Also for the stochastic Euler equations, we refer the reader to \cite{BessaihH,Flandoli1,PeszatS,Mariani,Kim1}.
Here we shall study the well-posedness and large deviation principle of strong solutions to the Boussinesq system \eqref{Equ1.1} with only partial diffusion, that is, only the equation of momentum has viscous diffusion but the equation of temperature has no diffusion.

 We now give an overview of this paper, including the main difficulties and our new ideas and results.  For the first part, we mainly concentrate on proving the local existence and uniqueness of strong solution to the system \eqref{Equ1.1} forced by multiplicative noise in dimensions two and three, where  the solutions   are strong in both PDE and probabilistic sense evolving continuously in $H^{s}$ for integer $s>\frac{d}{2}+1$. A key ingredient which allows us to obtain the strong pathwise solution is to establish the compactness in suitable functional space. In the stochastic setting, the embedding $L^{2}(\Omega;X)\hookrightarrow L^{2}(\Omega;Y)$ might not be compact, even if $X\hookrightarrow\hookrightarrow Y$. As a result, the usual compactness criteria, such as the Aubin or Arzel\`{a}-Ascoli type theorems, can not be used directly. Thus, we rely on the Yamada-Watanabe type argument to obtain the pathwise solution after we establish the existence of martingale solution and pathwise uniqueness.

  The first difficulty we meet in the proof of above mentioned local existence is how to construct a suitable approximation scheme. Actually, the term $\|\nabla u,\nabla \theta\|_{L^{\infty}}\cdot\|u,\theta\|_{s}$ appears when we establish a priori estimates for approximation solutions, which prevents us from closing the a priori $L^{2}(\Omega; H^{s})$ estimate. Inspired by \cite{Kim1}, we add a cut-off function to render the nonlinear term, which will allow us to obtain the uniform a priori estimates. By a fully exploit the hidden structure, we find that $\|\nabla \theta\|_{L^{\infty}}$ can be controlled by the initial data and $\|\nabla u\|_{L^{\infty}}$, so it is enough for the cut-off function which depends only on $\|\nabla u\|_{L^{\infty}}$. This would allow us to adapt a mixed method to construct the approximation solutions, that is, the equation of temperature is solved directly by applying the standard method of characteristics, while the momentum equation is approximated by a finite dimensional Galerkin scheme. However, the cut-off function brings difficulty in the proof of uniqueness which plays a crucial role in the process of passing martingale solution to strong pathwise solution. In order to overcome this difficulty, we first show that there exists a pathwise solution of high regularity in $H^{s'}$ for integer $s'=s+1$.

  With the spirit of \cite{Lai} and \cite{Glatt-Holtz}, we can extend the range of regularity index to $s>\frac{d}{2}+1$ by applying a density and stability argument. Here, the term $\|\eta\|_{{s-1}}^{2}(\|u_{j}\|_{s+1}^{2}+\|\theta_{j}\|_{s+1}^{2})$ arises due to the coupled construction of system (\ref{Equ1.1}) with $\eta=\theta_{j}-\theta_{k}$, and  the desired convergence requires  $\mathbb{E}\|\eta\|_{{s-1}}^{2}(\|u_{j}\|_{s+1}^{2}+\|\theta_{j}\|_{s+1}^{2})\sim o(1)$ which makes our estimates more complicated.

The main contribution of this paper is to obtain the global existence of strong pathwise solution of the system \eqref{Equ1.1} with large initial data forced by additive noise in dimension two. We mention that the diffusion term $\Delta u$ plays an essential role throughout the proof. Here, we use a stochastic analogue of logarithmic Gronwall's lemma to show that $\sup_{t\in [0,\xi\wedge T]}\|\nabla w, \nabla\theta\|_{L^{4}}<\infty$, where $w={\rm curl}u$ and $\xi$ is the maximal existence time of the strong pathwise solution. However, unlike in \cite{Glatt-Holtz}, this estimate is not sufficient for our case to conclude that
$\xi=\infty$ a.s. since  $\xi$ might not be a blow-up time under this norm. To overcome the difficulty, we establish a non-blowup criterion for the solution in the stochastic case, which shows that once we built the bound of the gradient of the solution in $L^{\infty}$ that can be controlled by $\|\nabla w\|_{L^{4}}$ and $\|\nabla u\|_{L^{2}}$ using the Sobolev embedding and the Biot-Savart law, the global result will follow. We point out that the coupled constitution makes the proof of the bound  $\sup_{t\in [0,\xi\wedge T]}\|\nabla w, \nabla\theta\|_{L^{4}}$ more complex compared to the Euler equation \cite{Glatt-Holtz}, more technique estimates are required, see Proposition 5.2.

After achieving the global well-posedness of solutions to the system \eqref{Equ1.1},  we turn to proving the large deviation principle by the weak convergence approach based on the variational representations of infinite-dimensional Wiener processes introduced by \cite{Budhiraja, Dupuis}. Authors in \cite{Chueshov} and \cite{Duan} also achieved large deviations in space $\mathcal{C}([0,T];H) \cap L^{2}(0,T;V)$ for the two dimensional stochastic Boussinesq equations by weak convergence approach whereas we establish it in space $\mathcal{X}=[\mathcal{C}([0,T];X^{s-1})\cap L^{2}(0,T;X^{s})]\times \mathcal{C}([0,T];H^{s-1})$ for any integer $s>2$. Unlike the system considered here, their equation of temperature has a diffusion term which proves to be useful in their estimates. Actually, the space $\mathcal{X}$ is a ``nonoptimal'' space. This is due to the fact that no diffusion term appears in the temperature equation, and hence  the weak convergence will be proved using the tightness argument by following ideas from \cite{Millet}.

The paper is organized as follows. In Section 2, we recall some deterministic and stochastic preliminaries associated with system \eqref{Equ1.1} and then state our results. In Section 3, we build the existence of strong pathwise solution in $H^{s}$ for integer $s>\frac{d}{2}+2$. Then, we extend the existence of strong pathwise solution to $H^s$ for integer $s>\frac{d}{2}+1$ using a density and stability argument in Section 4. Section 5 is devoted to applying the stochastic logarithmic Gronwall  lemma to establish the global existence of strong pathwise solution under the case of additive noise in two dimensional. The large deviation principle is then proved in Section 6.
\bigskip

\section{Preliminaries and main results}\label{sec2}
\setcounter{equation}{0}
In this section, we begin by reviewing some deterministic and stochastic preliminaries associated with system (1.1) and then give our main results.

For each integer $s\geq0$, let
\begin{equation}\label{2.1}
X^{s}(\mathbb{T}^{d})=\left\{u\in (H^{s}(\mathbb{T}^{d}))^{d}:\nabla\cdot u=0\right\},
\end{equation}
where
\begin{eqnarray*}
H^{s}(\mathbb{T}^{d}):=\left\{u\in L^{2}(\mathbb{T}^{d}):\|u\|_{H^{s}}^{2}=\sum_{k\in \mathbb{Z}}(1+k^{2})^{s}|\hat{u}(k)|^{2}<\infty\right\},
\end{eqnarray*}
denotes the Sobolev spaces of functions having distributional derivatives up to order $s\in \mathbb{N}^{+}$ integrable in $L^{2}(\mathbb{T}^{d})$. The inner product of $X^{s}$ will be denoted by $(\cdot,\cdot)_{X^{s}}=\sum_{|\alpha|\leq s}(\partial^{\alpha}\cdot,\partial^{\alpha}\cdot)_{L^{2}}$.
We denote by $P$ the Leray projector which is the orthogonal projection from $L^{2}(\mathbb{T}^{d})$ into the closed subspace $L^2_{\rm div}(\mathbb{T}^d):=\{u\in (L^{2}(\mathbb{T}^{d}))^{d}:\nabla\cdot u=0\}$ and is also a bounded linear operator from $H^{s}(\mathbb{T}^{d})$ into $X^{s}(\mathbb{T}^{d})$.

In order to estimate the nonlinear terms, we shall use the following commutator and Moser estimates which were proved in \cite{Kato2,MajdaA}.
\begin{lemma}\label{lem2.1}
For $u,v\in H^{s}(\mathbb{T}^{d})$ if $s>\frac{d}{2}+1$,
\begin{equation}\label{2.2}
\sum\limits_{0\leq|\alpha|\leq s}\|\partial^{\alpha}(u\cdot \nabla) v-u\cdot \nabla \partial^{\alpha}v\|_{L^{2}}\leq C(\|\nabla u\|_{L^{\infty}}\|v\|_{s}+\|\nabla v\|_{L^{\infty}}\|u\|_{s}),
\end{equation}
and if $s>\frac{d}{2}$,
 \begin{equation}\label{2.3}
\|uv\|_{s}\leq C(\|u\|_{L^{\infty}}\|v\|_{s}+\|v\|_{L^{\infty}}\|u\|_{s}),
\end{equation}
for some positive constants $C=C(s,\mathbb{T}^{d})$ independent of $u$ and $v$.
\end{lemma}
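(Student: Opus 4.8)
The plan is to reduce both inequalities to a single Gagliardo--Nirenberg type product bound and then read off (2.2) and (2.3) from the Leibniz expansion. Throughout I write $\|\cdot\|_s=\|\cdot\|_{H^s}$ and use the homogeneous seminorms $\|\cdot\|_{\dot H^m}$, noting $\|w\|_{\dot H^m}\le\|w\|_s$ for $m\le s$. The core estimate I would establish first is: for multi-indices $\beta,\gamma$ with $|\beta|+|\gamma|=m\le s$,
\[
\|\partial^\beta f\,\partial^\gamma g\|_{L^2}\le C\bigl(\|f\|_{L^\infty}\|g\|_{\dot H^m}+\|g\|_{L^\infty}\|f\|_{\dot H^m}\bigr).
\]
Its proof combines H\"older's inequality $\|\partial^\beta f\,\partial^\gamma g\|_{L^2}\le\|\partial^\beta f\|_{L^{2m/|\beta|}}\|\partial^\gamma g\|_{L^{2m/|\gamma|}}$ (the two exponents are conjugate for $L^2$ precisely because $|\beta|+|\gamma|=m$) with the classical Gagliardo--Nirenberg inequality $\|\partial^j w\|_{L^{2m/j}}\le C\|w\|_{L^\infty}^{1-j/m}\|w\|_{\dot H^m}^{j/m}$ from \cite{Kato2,MajdaA}. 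Substituting and using $1-|\beta|/m=|\gamma|/m$ recasts the bound as $(\|f\|_{L^\infty}\|g\|_{\dot H^m})^{|\gamma|/m}(\|f\|_{\dot H^m}\|g\|_{L^\infty})^{|\beta|/m}$; Young's inequality with the conjugate exponents $m/|\gamma|$ and $m/|\beta|$ then converts this geometric mean into the desired sum. The endpoints $|\beta|=0$ and $|\beta|=m$, where one factor carries the full $L^\infty$ weight, are immediate.

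The Moser estimate (2.3) follows at once. Expanding $\partial^\alpha(uv)=\sum_{\beta\le\alpha}\binom{\alpha}{\beta}\partial^\beta u\,\partial^{\alpha-\beta}v$ by the Leibniz rule, applying the core estimate to each summand with $m=|\alpha|$, and summing over $\beta\le\alpha$ and $1\le|\alpha|\le s$ (the $|\alpha|=0$ term being $\|uv\|_{L^2}\le\|u\|_{L^\infty}\|v\|_{L^2}$) yields $\|uv\|_s\le C(\|u\|_{L^\infty}\|v\|_s+\|v\|_{L^\infty}\|u\|_s)$. The hypothesis $s>\frac d2$ enters only through the finiteness of the $L^\infty$ norms.

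For the commutator estimate (2.2) I would exploit a cancellation. Writing $\partial^\alpha((u\cdot\nabla)v)=\sum_{\beta\le\alpha}\binom{\alpha}{\beta}\partial^\beta u\cdot\nabla\partial^{\alpha-\beta}v$, the term $\beta=0$ equals $u\cdot\nabla\partial^\alpha v$ and is exactly cancelled by the subtracted quantity, so only summands with $|\beta|\ge1$ survive. In each such summand I peel one derivative off $\partial^\beta u$ to expose a factor $\nabla u$ and view $\nabla\partial^{\alpha-\beta}v=\partial^{\alpha-\beta}(\nabla v)$ as a derivative of $\nabla v$; the summand then has the form $\partial^{\beta'}(\nabla u)\,\partial^{\gamma'}(\nabla v)$ with $|\beta'|+|\gamma'|=|\alpha|-1$. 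Applying the core estimate with $f=\nabla u$, $g=\nabla v$ and $m=|\alpha|-1$ bounds it by $C(\|\nabla u\|_{L^\infty}\|\nabla v\|_{\dot H^{|\alpha|-1}}+\|\nabla v\|_{L^\infty}\|\nabla u\|_{\dot H^{|\alpha|-1}})$, and since $\|\nabla w\|_{\dot H^{|\alpha|-1}}\le\|w\|_{|\alpha|}\le\|w\|_s$, summing over $|\alpha|\le s$ gives (2.2). The stronger requirement $s>\frac d2+1$ enters precisely here, as it is what places $\nabla u,\nabla v$ in $L^\infty$ so that they may serve as the base functions in the interpolation.

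I expect the main obstacle to be the bookkeeping in the interpolation step of the core estimate: one must verify that the H\"older exponents are genuinely conjugate and, crucially, that Young's inequality delivers the product structure with $\nabla u,\nabla v$ (rather than $u,v$) occupying the $L^\infty$ slots, since this is exactly the form needed to close the a priori $H^s$ estimates elsewhere in the paper. The remaining steps are routine, the cancellation of the top-order term in the commutator being the only genuinely structural input.
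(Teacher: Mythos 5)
The paper does not actually prove Lemma \ref{lem2.1}: it cites Kato--Ponce \cite{Kato2} and Majda \cite{MajdaA}, and your argument is precisely the classical Gagliardo--Nirenberg/Moser proof given in those references (conjugate H\"older exponents, interpolation of intermediate derivatives between $L^\infty$ and $\dot H^m$, Young's inequality, and cancellation of the $\beta=0$ term in the commutator). Your proof is correct; the only technicality worth flagging is that on $\mathbb{T}^d$ the Gagliardo--Nirenberg inequality should be applied to the mean-zero part of the function, which is harmless here since $\partial^\beta f$ with $|\beta|\ge 1$ depends only on that part and $\|f-\bar f\|_{L^\infty}\le 2\|f\|_{L^\infty}$.
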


By Lemma \ref{lem2.1}, we can use the H\"{o}lder inequality to obtain the following estimates which will be applied throughout the rest of the paper.

\begin{lemma}\label{lem2.2}
For $s>\frac{d}{2}+1$, there exists a positive constant $C=C(s,\mathbb{T}^{d})$ such that,

 (i) if $u\in H^{s}(\mathbb{T}^{d})$ and $v\in H^{s+1}(\mathbb{T}^{d})$, then $P(u\cdot\nabla v)\in X^{s}(\mathbb{T}^{d})$ and
\begin{equation*}
\|P(u\cdot\nabla) v\|_{s}\leq C(\|u\|_{L^{\infty}}\|v\|_{s+1}+\|\nabla v\|_{L^{\infty}}\|u\|_{s}),
\end{equation*}

(ii)  if $u\in H^{s}(\mathbb{T}^{d})$ and $v\in H^{s+1}(\mathbb{T}^{d})$, then
\begin{equation*}
\left|\sum\limits_{|\alpha|\leq s}(\partial^{\alpha}(u\cdot \nabla) v,\partial^{\alpha}v)\right|\leq C(\|\nabla v\|_{L^{\infty}}\|u\|_{s}+\|\nabla u\|_{L^{\infty}}\|v\|_{s})\|v\|_{s}.
\end{equation*}
\end{lemma}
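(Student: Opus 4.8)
The plan is to derive both estimates directly from the commutator and Moser estimates of Lemma \ref{lem2.1}, treating the two parts separately; neither requires new machinery, only a careful bookkeeping of which factor carries the top-order derivative.

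For part (i), I would first invoke the boundedness of the Leray projector $P$ from $H^s$ into $X^s$ to reduce matters to estimating $\|u\cdot\nabla v\|_s$, which simultaneously gives the membership $P(u\cdot\nabla v)\in X^s$. Writing $u\cdot\nabla v=\sum_j u_j\,\partial_j v$ and applying the Moser estimate \eqref{2.3} (valid since $s>\frac{d}{2}$) to each product $u_j\,\partial_j v$ gives
\[
\|u_j\,\partial_j v\|_s\leq C\left(\|u_j\|_{L^\infty}\|\partial_j v\|_s+\|\partial_j v\|_{L^\infty}\|u_j\|_s\right).
\]
Bounding $\|\partial_j v\|_s\leq\|v\|_{s+1}$, $\|u_j\|_s\leq\|u\|_s$, $\|u_j\|_{L^\infty}\leq\|u\|_{L^\infty}$ and $\|\partial_j v\|_{L^\infty}\leq\|\nabla v\|_{L^\infty}$, then summing over $j$, yields the claimed bound. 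This part is routine.

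For part (ii), the key idea is to isolate the top-order term via the commutator. For each multi-index $\alpha$ with $|\alpha|\leq s$, set $R_\alpha:=\partial^\alpha(u\cdot\nabla v)-u\cdot\nabla\partial^\alpha v$ and decompose
\[
\sum_{|\alpha|\leq s}(\partial^\alpha(u\cdot\nabla)v,\partial^\alpha v)=\sum_{|\alpha|\leq s}(u\cdot\nabla\partial^\alpha v,\partial^\alpha v)+\sum_{|\alpha|\leq s}(R_\alpha,\partial^\alpha v).
\]
The second sum is handled by Cauchy--Schwarz together with the commutator estimate \eqref{2.2}: since $\|\partial^\alpha v\|_{L^2}\leq\|v\|_s$ for each $\alpha$, it is bounded by $\big(\sum_{|\alpha|\leq s}\|R_\alpha\|_{L^2}\big)\|v\|_s\leq C(\|\nabla u\|_{L^\infty}\|v\|_s+\|\nabla v\|_{L^\infty}\|u\|_s)\|v\|_s$, which is exactly of the desired form.

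The main point---and the only step requiring genuine care---is the first sum, the top-order contribution where the commutator structure is absent and one apparently loses a derivative on $v$. Here I would integrate by parts over the torus, using
\[
(u\cdot\nabla\partial^\alpha v,\partial^\alpha v)=\tfrac{1}{2}\int_{\mathbb{T}^d}u\cdot\nabla|\partial^\alpha v|^2\,dx=-\tfrac{1}{2}\int_{\mathbb{T}^d}(\nabla\cdot u)\,|\partial^\alpha v|^2\,dx,
\]
with no boundary terms by periodicity. This term vanishes identically when $u$ is divergence-free, and in general is bounded by $\tfrac{1}{2}\|\nabla\cdot u\|_{L^\infty}\|\partial^\alpha v\|_{L^2}^2\leq C\|\nabla u\|_{L^\infty}\|v\|_s^2$, which again fits within the target estimate. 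Summing the two contributions yields part (ii). I expect this integration-by-parts step to be the decisive one, since it is precisely where the spurious top-order derivative on $v$ is removed and the estimate closes with the stated right-hand side.
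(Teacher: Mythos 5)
Your proof is correct and is essentially the argument the paper intends: the paper gives no detailed proof of Lemma \ref{lem2.2}, deriving it in one line from Lemma \ref{lem2.1} and the H\"{o}lder inequality, and your treatment --- boundedness of $P$ plus the Moser estimate \eqref{2.3} for (i), and the commutator decomposition with the integration-by-parts cancellation $(u\cdot\nabla\partial^{\alpha}v,\partial^{\alpha}v)=-\tfrac{1}{2}\int_{\mathbb{T}^{d}}(\nabla\cdot u)|\partial^{\alpha}v|^{2}\,dx$ handling the top-order term for (ii) --- is exactly the standard instantiation of that derivation. No gaps.
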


The following are some spaces used for solutions involving fractional derivative in time. These spaces are useful since solutions of stochastic evolution system are H\"{o}lder continuous of order strictly less than $\frac{1}{2}$ with respect to time.

For any fixed $p>1$ and $\alpha\in(0,1)$ we define,
\begin{equation*}
W^{\alpha,p}(0,T;X):=\left\{v\in L^{p}(0,T;X):\int_{0}^{T}\int_{0}^{T}\frac{\|v(t_{1})-v(t_{2})\|_{X}^{p}}{|t_{1}-t_{2}|^{1+\alpha p}}dt_{1}dt_{2}<\infty\right\},
\end{equation*}
endowed with the norm,
\begin{equation*}
\|v\|^p_{W^{\alpha,p}(0,T;X)}:=\int_{0}^{T}\|v(t)\|_{X}^{p}dt+\int_{0}^{T}\int_{0}^{T}\frac{\|v(t_{1})-v(t_{2})\|_{X}^{p}}{|t_{1}-t_{2}|^{1+\alpha p}}dt_{1}dt_{2},
\end{equation*}
where $X$ is a separable Hilbert space.
For the case $\alpha=1$, we take,
\begin{equation*}
W^{1,p}(0,T;X):=\left\{v\in L^{p}(0,T;X):\frac{dv}{dt}\in L^{p}(0,T;X)\right\},
\end{equation*}
which is the classical Sobolev space with its usual norm,
\begin{equation*}
\|v\|_{W^{1,p}(0,T;X)}^{p}:=\int_{0}^{T}\|v(t)\|_{X}^{p}+\left\|\frac{dv}{dt}(t)\right\|_{X}^{p}dt.
\end{equation*}
Note that for $\alpha\in(0,1)$, $ W^{1,p}(0,T;X)$ is a subspace of $ W^{\alpha,p}(0,T;X)$.

We will use the following compact embedding results given in  \cite{Flandoli2} to achieve a tightness argument.
\begin{lemma}\!\!\!{\rm\cite[Theorems 2.1]{Flandoli2}}\label{lem2.3} Suppose that $X_{1}\subset X_{0}\subset X_{2}$ are Banach spaces and $X_{1}$ and $X_{2}$ are reflexive and the embedding of $X_{1}$ into $X_{0}$ is compact.
Then for any $1<p<\infty,~ 0<\alpha<1$, the embedding,
\begin{eqnarray*}
L^{p}(0,T;X_{1})\cap W^{\alpha,p}(0,T;X_{2})\hookrightarrow L^{p}(0,T;X_{0}),
\end{eqnarray*}
and
\begin{eqnarray*}
\mathcal{C}([0,T];X_1)\cap \mathcal{C}^\alpha([0,T];X_2)\hookrightarrow \mathcal{C}([0,T];X_0),
\end{eqnarray*}
is compact.
\end{lemma}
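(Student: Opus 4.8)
\noindent\emph{Proof proposal.} The plan is to derive both statements from two classical ingredients: an Ehrling-type interpolation inequality generated by the compact embedding $X_{1}\hookrightarrow\hookrightarrow X_{0}$, together with a compactness criterion tailored to each target space (Kolmogorov--Riesz--Fr\'echet for the $L^{p}$-in-time statement, and Arzel\`a--Ascoli for the $\mathcal{C}$-in-time statement). First I would record Ehrling's lemma: since $X_{1}\hookrightarrow\hookrightarrow X_{0}\hookrightarrow X_{2}$, for every $\ve>0$ there is $C_{\ve}>0$ with
\[
\|z\|_{X_{0}}\le \ve\,\|z\|_{X_{1}}+C_{\ve}\,\|z\|_{X_{2}}\qquad\text{for all }z\in X_{1}.
\]
This follows by the usual contradiction argument: normalising $\|z_{n}\|_{X_{0}}=1$ with $\|z_{n}\|_{X_{1}}$ bounded and $\|z_{n}\|_{X_{2}}\to0$, compactness of $X_{1}\hookrightarrow X_{0}$ extracts a limit of unit $X_{0}$-norm that must vanish in $X_{2}$, hence in $X_{0}$, a contradiction. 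Reflexivity of $X_{1},X_{2}$ I would use to guarantee that bounded sequences possess weakly convergent subsequences, which serves to identify the strong limits produced below.

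For the first (Bochner $L^{p}$) embedding, let $\{v_{n}\}$ be bounded by $M$ in $L^{p}(0,T;X_{1})\cap W^{\alpha,p}(0,T;X_{2})$. Applying Ehrling to the differences and integrating in time gives
\[
\|v_{n}-v_{m}\|_{L^{p}(0,T;X_{0})}\le \ve\,\|v_{n}-v_{m}\|_{L^{p}(0,T;X_{1})}+C_{\ve}\,\|v_{n}-v_{m}\|_{L^{p}(0,T;X_{2})}\le 2\ve M+C_{\ve}\,\|v_{n}-v_{m}\|_{L^{p}(0,T;X_{2})},
\]
so it suffices to extract a subsequence that is Cauchy in $L^{p}(0,T;X_{2})$, after which letting $\ve\to0$ produces a Cauchy, hence convergent, subsequence in $L^{p}(0,T;X_{0})$. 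Relative compactness in $L^{p}(0,T;X_{2})$ I would obtain from the vector-valued Kolmogorov--Riesz--Fr\'echet theorem. Its range hypothesis---that the time-averages $\tfrac{1}{b-a}\int_{a}^{b}v_{n}(t)\,dt$ lie in a fixed compact subset of $X_{2}$---holds because these averages are bounded in $X_{1}$ and $X_{1}\hookrightarrow\hookrightarrow X_{2}$. The equi-smallness of time translations is the crux, and comes from the claim
\[
\int_{0}^{T-h}\|v(t+h)-v(t)\|_{X_{2}}^{p}\,dt\le C\,h^{\alpha p}\,\|v\|_{W^{\alpha,p}(0,T;X_{2})}^{p},
\]
which, applied to each $v_{n}$, yields $\sup_{n}\|v_{n}(\cdot+h)-v_{n}(\cdot)\|_{L^{p}(0,T-h;X_{2})}\le CM\,h^{\alpha}\to0$. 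I would prove the displayed estimate by averaging the triangle inequality over an intermediate variable $r\in(t,t+h)$, using $|t+h-r|\le h$ to insert the factor $|t+h-r|^{-(1+\alpha p)}$, and then invoking Fubini to bound the result by the Gagliardo seminorm of $v$ in $W^{\alpha,p}(0,T;X_{2})$.

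For the second (continuous-in-time) embedding, let $\{v_{n}\}$ be bounded by $M$ in $\mathcal{C}([0,T];X_{1})\cap \mathcal{C}^{\alpha}([0,T];X_{2})$. For each fixed $t$ the values $\{v_{n}(t)\}$ are bounded in $X_{1}$, hence precompact in $X_{0}$, giving pointwise precompactness. Equicontinuity in $X_{0}$ follows from Ehrling combined with the H\"older bound: for $s,t\in[0,T]$,
\[
\|v_{n}(t)-v_{n}(s)\|_{X_{0}}\le \ve\,\|v_{n}(t)-v_{n}(s)\|_{X_{1}}+C_{\ve}\,\|v_{n}(t)-v_{n}(s)\|_{X_{2}}\le 2\ve M+C_{\ve}M\,|t-s|^{\alpha},
\]
so choosing $\ve$ small and then $|t-s|$ small makes the right-hand side uniformly small. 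The vector-valued Arzel\`a--Ascoli theorem then delivers a subsequence convergent in $\mathcal{C}([0,T];X_{0})$.

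I expect the main obstacle to be the time-translation estimate in $X_{2}$ together with the correct verification of the Kolmogorov--Riesz hypotheses in the Bochner setting; once the $W^{\alpha,p}$ seminorm is shown to control the $L^{p}$-modulus of continuity by $Ch^{\alpha}$, the Ehrling reduction renders the remainder of both arguments routine.
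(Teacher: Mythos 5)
Your proposal is correct. Note first that the paper gives no proof of this lemma at all: it is quoted from the literature (\cite[Theorem 2.1]{Flandoli2}), so there is no internal argument to compare against. What you have written is, in essence, the standard proof of that cited result: an Ehrling inequality $\|z\|_{X_{0}}\le \varepsilon\|z\|_{X_{1}}+C_{\varepsilon}\|z\|_{X_{2}}$ reduces compactness in $L^{p}(0,T;X_{0})$ (resp.\ $\mathcal{C}([0,T];X_{0})$) to compactness in the weaker norm of $X_{2}$, which is then supplied by the Kolmogorov--Riesz--Fr\'echet/Simon criterion (resp.\ Arzel\`a--Ascoli). The crux is indeed the translation estimate, and your averaging/Fubini argument for it is sound: for $r\in(t,t+h)$ both $|t+h-r|$ and $|r-t|$ are at most $h$, so $\|v(t+h)-v(t)\|^{p}\le 2^{p-1}h^{\alpha p}\int_{t}^{t+h}\bigl(\tfrac{\|v(t+h)-v(r)\|^{p}}{|t+h-r|^{1+\alpha p}}+\tfrac{\|v(r)-v(t)\|^{p}}{|r-t|^{1+\alpha p}}\bigr)dr$, and integrating in $t$ and exchanging the order of integration bounds the left side by $2^{p}h^{\alpha p}$ times the Gagliardo seminorm. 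Your verification of the range hypothesis of the Kolmogorov--Riesz criterion is also correct, since the time averages are bounded in $X_{1}$ and $X_{1}\hookrightarrow\hookrightarrow X_{0}\hookrightarrow X_{2}$ is compact as the composition of a compact and a continuous embedding; the final Cauchy-in-$L^{p}(0,T;X_{0})$ extraction via Ehrling closes the argument.

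Two minor remarks. First, the reflexivity of $X_{1}$ and $X_{2}$ is never actually used in your proof (your remark about weakly convergent subsequences plays no role); this is harmless, as the conclusion holds without reflexivity --- that hypothesis is an artifact of the proof technique in \cite{Flandoli2}. Second, the second embedding, $\mathcal{C}([0,T];X_{1})\cap\mathcal{C}^{\alpha}([0,T];X_{2})\hookrightarrow\hookrightarrow\mathcal{C}([0,T];X_{0})$, is not literally Theorem 2.1 of \cite{Flandoli2} (that theorem is only the $L^{p}$ statement), so your direct Arzel\`a--Ascoli argument --- pointwise precompactness in $X_{0}$ from the compact embedding, equicontinuity in $X_{0}$ from Ehrling plus the uniform $\alpha$-H\"older bound in $X_{2}$ --- is exactly what is needed to justify that part of the lemma, and in this respect your write-up is more complete than the paper's citation.
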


We now describe the stochastic setting of the problem. Let $\mathcal{S}:=(\Omega,\mathcal{F},\{\mathcal{F}_{t}\}_{t\geq0},\mathbb{P}, \mathcal{W})$ be a fixed stochastic basis and $(\Omega,\mathcal{F},\mathbb{P})$ a complete probability space. Denote $Q$ as a linear positive, trace class (hence compact) operator in Hilbert space $H$ and let $\mathcal{W}$ be a Wiener process defined on the Hilbert space $H$ with covariance operator $Q$, which is adapted to the complete, right continuous filtration $\{\mathcal{F}_{t}\}_{t\geq 0}$. If $\{e_{k}\}_{k\geq 1}$ is a complete orthonormal basis of $H$ such that $Qe_{i}=\lambda_{i}e_{i}$, then $\mathcal{W}$ can be written formally as the expansion $\mathcal{W}(t,\omega)=\sum\limits_{k\geq 1}\sqrt{\lambda_{k}}e_{k}\mathcal{W}_{k}(t,\omega)$ where $\{\mathcal{W}_{k}\}_{k\geq 1}$ is a sequence of independent standard one-dimensional Brownian motions. We also have that $\mathcal{W}\in \mathcal{C}([0,\infty),H)$ a.s. see \cite{Prato}.

Let $H_{0}=Q^{\frac{1}{2}}H$, then $H_{0}$ is a Hilbert space with the inner product
\begin{equation*}
\langle h,g\rangle_{H_{0}}=\langle Q^{-\frac{1}{2}}h,Q^{-\frac{1}{2}}g\rangle_{H},~ \forall~~ h,g\in H_{0},
\end{equation*}
with the induced norm $\|\cdot\|_{H_{0}}^{2}=\langle\cdot,\cdot\rangle_{H_{0}}$. The imbedding map $i:H_{0}\rightarrow H$ is Hilbert-Schmidt and hence compact operator with $ii^{\ast}=Q$. Now consider another separable Hilbert space $X$ and let $L_{Q}(H_{0},X)$ be the space of linear operators $S:H_{0}\rightarrow X$ such that $SQ^{\frac{1}{2}}$ is a linear Hilbert-Schmidt
operator from $H$ to $X$,  endowed with
  the norm $\|S\|_{L_{Q}}^{2}=tr(SQS^{\ast}) =\sum\limits_{k}| SQ^{\frac{1}{2}}e_{k}|_{X}^{2}$.
 Set $L_2(H,X):=\{SQ^{\frac{1}{2}}: \, S\in L_{Q}(H_{0},X)\}$ with the norm defined by
  $\|f\|^2_{L_2(H,X)}  =\sum\limits_{k}|fQ^{\frac{1}{2}}e_{k}|_{X}^{2}$.  
%

For a $X$-valued predictable process $G\in L^{2}(\Omega;L^{2}_{loc}([0,\infty),L_{2}(H,X)))$  by taking $G_{k}=GQ^{\frac{1}{2}}e_{k}$, one can define the stochastic integral,
\begin{equation*}
M_{t}:=\int_{0}^{t}Gd\mathcal{W}=\sum_{k}\int_{0}^{t}
GQ^{\frac{1}{2}}e_{k}d\mathcal{W}_{k}=\sum_{k}\int_{0}^{t}G_{k}d\mathcal{W}_{k},
\end{equation*}
which is an $X$-valued square integrable martingales, and the Burkholder-Davis-Gundy inequality holds,
\begin{equation}\label{2.4}
\mathbb{E}\left(\sup_{0\leq t\leq T}\left\|\int_{0}^{t}Gd\mathcal{W}\right\|_{X}^{p}\right)\leq c_{p}\mathbb{E}\left(\int_{0}^{T}\|G\|_{L_{2}(H,X)}^{2}dt\right)^{\frac{p}{2}},
\end{equation}
for any $p\geq1$, for more details see \cite{Prato}.

 We shall also use the stochastic integrals evolving on $W^{s,p}(\mathbb{T}^{d})$, and recall some details of the construction given in \cite{Rozovskii1}. Suppose that $p\geq 2, m\geq 0$ and define
\begin{equation*}
\mathbb{W}^{s,p}:=\left\{f:\mathbb{T}^{d}\rightarrow L_{2}~~ {\rm such~ that}~~ f_{k}(\cdot)=f(\cdot)e_{k}\in W^{s,p}, \sum_{|\alpha|\leq m}\int_{\mathbb{T}^{d}}|\partial^{\alpha}f|_{L_{2}}^{p}dx<\infty\right\},
\end{equation*}
which is a Banach space with norm,
\begin{equation*}
\|f\|_{\mathbb{W}^{s,p}}^{p}:=\sum_{|\alpha|\leq s}\int_{\mathbb{T}^{d}}|\partial^{\alpha}f|_{L_{2}}^{p}dx.
\end{equation*}

We next introduce the conditions imposed on the noise intensity $f$. For Banach spaces $X$ and $Y$, let $\mathcal{L}(X,Y)$ be the space of functions $f\in \mathcal{C}(X\times [0,\infty);Y)$ that satisfy the linear growth and Lipschitz conditions. Namely, there exist positive constants $C_{1}$, $C_{2}$ such that,
\begin{eqnarray*}
&&\|f(x,t)\|_{Y} \leq C_{1}(1+\|x\|_{X}), \hspace{.5cm} \text{for all } x\in X, t>0,\label{growth}\\
&&\|f(x,t)-f(y,t)\|_{Y} \leq C_{2}\|x-y\|_{X}, \hspace{.5cm} \text{for all } x,y\in X, t>0.
\end{eqnarray*}
Denote the space of functions $f\in \mathcal{C}(X\times [0,\infty);Y)$ that only satisfy the linear growth condition as $\mathcal{L}_{g}(X,Y)$. Then, we assume that
\begin{equation}\label{2.7}
f\in {\rm \mathcal{L}}(L^{2},L_{2}(H;L^{2}))\cap {\rm \mathcal{L}}(H^{s-1},L_{2}(H;H^{s-1}))\cap {\rm \mathcal{L}}(H^{s},L_{2}(H;H^{s})),
\end{equation}
for fixed integer $s>\frac{d}{2}+1$. In the process of proving local existence of pathwise solution, we also impose additional conditions as follows,
\begin{eqnarray}
f\in \mathcal{L}_{g}(H^{s'},L_{2}(H;H^{s'})), \label{2.9}
\end{eqnarray}
 where $s'=s+1$. In the case of additive noise, we assume that
\begin{equation}\label{2.10}
f\in L^{2}(\Omega,L^{2}_{loc}([0,\infty);L_{2}(H;H^{s})))\cap L^{4}(\Omega,L^{4}_{loc}([0,\infty);\mathbb{W}^{2,4})),
\end{equation}
and that $f$ is predictable. Next, we introduce the definition of the local, maximal and global solutions to system (\ref{Equ1.1}).

\begin{definition}\label{def2.1}\rm{(Local strong pathwise solution)} Let $(\Omega,\mathcal{F},\{\mathcal{F}_{t}\}_{t\geq0},\mathbb{P},\mathcal{W})$ be a fixed probability space. Assume that $(u_{0},\theta_{0})$ is an $X^{s}\times H^{s}$-valued $\mathcal{F}_{0}$-measurable random variable and $f$ satisfies conditions (\ref{2.7})-(\ref{2.9}).

 (i) The triple $(u,\theta,\tau)$ is a local strong pathwise solution if $\tau$ is a strictly positive stopping time such that $(u(\cdot \wedge \tau),\theta(\cdot \wedge \tau))$ is an $X^{s}\times H^{s}$-valued $\{\mathcal{F}_{t}\}$-progressively measurable process for $s>\frac{d}{2}+1,~d=2,3$, with,
\begin{equation*}
u(\cdot \wedge \tau)\in \mathcal{C}([0,\infty),X^{s})\cap L_{loc}^{2}(0,\infty;X^{s+1}),\hspace{.3cm} \text{and} \hspace{.3cm} \theta(\cdot \wedge \tau)\in \mathcal{C}([0,\infty),H^{s}), ~\mathbb{P}~\mbox{a.s.}
\end{equation*}
 and satisfying,
\begin{eqnarray}\label{2.11}
\left\{\begin{array}{ll}
u(t\wedge \tau)+\int_{0}^{t\wedge \tau}Au+P(u\cdot\nabla )udr=u_{0}+\int_{0}^{t\wedge \tau}P\theta e_{d}dr+\int_{0}^{t\wedge \tau}Pf(u,\theta)d\mathcal{W},\\
\theta(t\wedge \tau)+\int_{0}^{t\wedge \tau}(u\cdot\nabla )\theta dr=\theta_{0},
\end{array}\right.
\end{eqnarray}
where $A=-P\triangle u$ is the Stokes operator, for every $t\geq0$.

(ii) The pathwise uniqueness of the solution holds in the following sense:
if $(u_{1},\theta_{1},\tau_{1})$ and $(u_{2},\theta_{2},\tau_{2})$ are local strong pathwise solutions of system (1.1), with  $\mathbb{P}\{(u_{1}(0),\theta_{1}(0))=(u_{2}(0),\theta_{2}(0))\}=1$, then,
\begin{eqnarray*}
\mathbb{P}\{(u_{1}(t,x),\theta_{1}(t,x))=(u_{2}(t,x),\theta_{2}(t,x));\forall t\in[0,\tau_{1}\wedge\tau_{2}]\}=1.
\end{eqnarray*}
\end{definition}
\begin{definition}\label{def2.2} \rm{(Maximal and global solution)} A maximal pathwise solution is a triple $(u,\theta,\{\tau_{n}\}_{n\geq1}, \xi)$ such that each pair $(u,\theta,\tau_{n})$ is a local pathwise solution and $\{\tau_{n}\}$ is an increasing sequence with
$\lim_{n\rightarrow\infty}\tau_{n}=\xi$ and
\begin{equation*}
\sup\limits_{t\in[0,\tau_{n}]}\|\nabla u(t)\|_{L^{\infty}}\geq R,~~\mbox{on the set} ~~ \{\xi<\infty\}.
\end{equation*}
A maximal pathwise solution $(u,\theta,\{\tau_{n}\}_{n\geq1},\xi)$ is global if $\xi=\infty$ a.s.
\end{definition}

We now state the existence results of this paper.
\begin{theorem}\label{the2.1} Let $(\Omega,\mathcal{F},\{\mathcal{F}_{t}\}_{t\geq0},\mathbb{P},\mathcal{W})$ be a fixed probability space. Assume that $(u_{0},\theta_{0})$ is an $X^{s}\times H^{s}$-valued $\mathcal{F}_{0}$-measurable random variable for integer $s>\frac{d}{2}+1,~d=2,3$ and $f$ satisfies conditions (\ref{2.7})-(\ref{2.9}). Then there exists a unique maximal strong pathwise solution $(u,\theta,\{\tau_{n}\}_{n\geq1},\xi)$ of \eqref{Equ1.1} in the sense of Definitions 2.1 and 2.2.
\end{theorem}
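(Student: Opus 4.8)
The plan is to follow the now-standard paradigm for strong pathwise solutions of stochastic PDEs: truncate the nonlinearity so the a priori estimates close, build a martingale solution of the truncated problem by stochastic compactness, prove pathwise uniqueness, upgrade to a pathwise solution via a Gy\"{o}ngy--Krylov/Yamada--Watanabe argument, and finally remove the truncation by localization to produce the maximal solution. Since the $H^s$ estimate is obstructed by the term $\|\nabla u,\nabla\theta\|_{L^\infty}\|u,\theta\|_s$, I would first insert a smooth cut-off $\phi_R(\|\nabla u\|_{L^\infty})$, with $\phi_R\equiv1$ on $[0,R]$ and $\phi_R\equiv0$ on $[2R,\infty)$, in front of the transport terms $(u\cdot\nabla)u$ and $(u\cdot\nabla)\theta$. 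Crucially, as noted in the introduction, the transport structure of the temperature equation lets one bound $\|\nabla\theta\|_{L^\infty}$ by the initial data and $\|\nabla u\|_{L^\infty}$, so a cut-off depending only on $\|\nabla u\|_{L^\infty}$ suffices.

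For the truncated system I would use the mixed scheme of \cite{Breit}: solve the temperature equation exactly by the method of characteristics with $u$ treated as a given divergence-free field, and approximate the momentum equation by a finite-dimensional Galerkin projection. Differentiating the Galerkin system by $\partial^\alpha$ for $|\alpha|\le s$, testing with $\partial^\alpha u$, and invoking the commutator and Moser estimates of Lemmas~\ref{lem2.1}--\ref{lem2.2}, the It\^{o} formula, and the growth hypothesis \eqref{2.7}, the cut-off tames the dangerous nonlinear terms and yields a uniform bound in $L^2(\Omega;L^\infty_tH^s)\cap L^2(\Omega;L^2_tH^{s+1})$. Because the cut-off will later obstruct uniqueness, I would derive the same estimate one order higher, in $H^{s'}$ with $s'=s+1$, using the extra growth condition \eqref{2.9}. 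Fractional-in-time bounds in $W^{\alpha,2}(0,T;X^{s'-1})$ (the stochastic integral supplying the H\"{o}lder exponent below $\tfrac12$) then combine with the compact embedding of Lemma~\ref{lem2.3} to give tightness of the laws of the approximations; Prokhorov and Skorohod produce a.s.-convergent copies on a new space whose limit is a martingale solution of the truncated system.

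Next I would establish pathwise uniqueness for the truncated system by estimating the difference of two solutions in the lower norm $X^{s-1}\times H^{s-1}$, where the $H^{s'}$ regularity from the previous step makes the truncated nonlinear differences Lipschitz, closing with a stochastic Gronwall argument. Existence of a martingale solution plus pathwise uniqueness lets the Gy\"{o}ngy--Krylov lemma \cite{Krylov} upgrade convergence in law to convergence in probability on the \emph{original} probability space, giving a pathwise solution $(u^R,\theta^R)$ of the truncated system there. Setting $\tau_R=\inf\{t\ge0:\|\nabla u^R(t)\|_{L^\infty}\ge R\}$, the cut-off is inactive on $[0,\tau_R]$, so $(u^R,\theta^R)$ solves the original system \eqref{Equ1.1}; uniqueness forces consistency across different $R$, and taking $R_n\uparrow\infty$, $\tau_n=\tau_{R_n}$, and $\xi=\lim_n\tau_n$ yields the maximal solution with the stated blow-up criterion on $\|\nabla u\|_{L^\infty}$.

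The main obstacle is the interplay between the missing diffusion in the temperature equation and the cut-off. With no smoothing, all regularity of $\theta$ must be transported from the initial data along the (cut-off) velocity flow, and the coupling forces the velocity and temperature estimates to be closed simultaneously at two regularity levels. Reconciling the cut-off, which is indispensable for the a priori bounds, with pathwise uniqueness, which is indispensable for passing to a pathwise solution, is the crux, and is exactly what compels the detour through the higher regularity space $H^{s'}=H^{s+1}$.
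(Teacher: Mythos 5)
Your proposal faithfully reconstructs the first half of the paper's argument (its Section 3): the cut-off $\varphi_{R}(\|\nabla u\|_{L^{\infty}})$, the observation that $\|\nabla\theta\|_{L^{\infty}}$ is transported and hence controlled by the data and $\|\nabla u\|_{L^{\infty}}$, the mixed Galerkin/characteristics scheme, tightness and Skorokhod, pathwise uniqueness exploiting the higher regularity $s'=s+1$, Gy\"{o}ngy--Krylov, de-truncation by a stopping time, and the maximality construction. However, as it stands it proves the theorem only for initial data in $X^{s'}\times H^{s'}$, i.e.\ in effect for integer $s>\frac{d}{2}+2$, not for data $(u_{0},\theta_{0})\in X^{s}\times H^{s}$ with integer $s>\frac{d}{2}+1$ as the theorem assumes. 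The ``detour through $H^{s'}$'' that you correctly identify as the crux is not free: the uniform $H^{s'}$ bound for the approximations requires $\sup_{n}\|P_{n}u_{0}\|_{s'}<\infty$ and $\|\theta_{0}\|_{s'}<\infty$, and neither holds for $H^{s}$ data ($P_{n}u_{0}$ is not uniformly bounded in $H^{s+1}$ when $u_{0}\in H^{s}$ only, and since the temperature equation is pure transport, $\theta_{n}$ has exactly the regularity of $\theta_{0}$ and no more). For the same reason the uniqueness estimate for the truncated system cannot be closed at level $s$: the cut-off difference term and the transport coupling produce the factors $\|u_{1}\|_{s+1}$ and, more fatally, $\|\theta_{1}\|_{s+1}$, which for $H^{s}$ data are simply not finite (there is no smoothing to generate them). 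A further obstruction is the limit passage in $\varphi_{R}(\|\nabla u_{n}\|_{L^{\infty}})$: it needs a.s.\ convergence in a space embedding into $W^{1,\infty}$, while tightness gives $\mathcal{C}([0,T];H^{s-1})$, and $H^{s-1}\hookrightarrow W^{1,\infty}$ only when $s>\frac{d}{2}+2$.

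What is missing is the paper's entire Section 4, a density and stability argument. One mollifies the data, $u_{0}^{j}=\rho_{j^{-1}}u_{0}$, $\theta_{0}^{j}=\rho_{j^{-1}}\theta_{0}$, applies your construction to obtain maximal pathwise solutions $(u_{j},\theta_{j})$ of the original, untruncated system in $H^{s'}$, and then shows these converge a.s.\ at level $H^{s}$ on a common random time interval via the abstract lemma of Glatt-Holtz and Ziane (the paper's Lemma 4.2), which requires verifying the pairwise Cauchy-type condition (4.2) and the small-time condition (4.3). Verifying (4.2) is the genuinely delicate step: because of the coupling, the difference estimates at level $s$ produce the quantity $(\|u_{j}-u_{k}\|_{s-1}^{2}+\|\theta_{j}-\theta_{k}\|_{s-1}^{2})(\|u_{j}\|_{s+1}^{2}+\|\theta_{j}\|_{s+1}^{2})$, which must itself be shown to vanish in expectation via an It\^{o} product formula argument combined with the mollifier rates $\|\rho_{\epsilon}f-f\|_{s-1}\leq C\epsilon\|f\|_{s}$ of Lemma 4.1. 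Without this second stage your construction does not reach the regularity range claimed in Theorem 2.1.
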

\begin{theorem}\label{the2.2} Let $(\Omega,\mathcal{F},\{\mathcal{F}_{t}\}_{t\geq0},\mathbb{P}, \mathcal{W})$ be a fixed probability space. Assume that the noise is additive, and $f$ satisfies condition (\ref{2.10}). Then there exists a unique global strong pathwise solution of \eqref{Equ1.1} with $d=2$ in the sense of Definition 2.2.
\end{theorem}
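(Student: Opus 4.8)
The plan is to prove global existence in two dimensions (Theorem \ref{the2.2}) by upgrading the local maximal solution from Theorem \ref{the2.1} to a global one, i.e.\ by showing that the stopping time $\xi$ satisfies $\xi=\infty$ a.s. By Definition \ref{def2.2} it suffices, on the set $\{\xi<\infty\}$, to rule out the blow-up of $\sup_{t\in[0,\tau_n]}\|\nabla u(t)\|_{L^\infty}$. The overall strategy is a two-tier a priori estimate: first control a lower-order quantity that is \emph{available} in two dimensions, namely $\sup_{t\in[0,\xi\wedge T]}\|\nabla w,\nabla\theta\|_{L^4}$ with $w=\mathrm{curl}\,u$, and then feed this into a non-blow-up criterion that bounds $\|\nabla u\|_{L^\infty}$. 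First I would record the basic energy identities: applying $P$ and testing the momentum equation, together with the transport structure of the temperature equation (which, having no diffusion, conserves all $L^p$ norms of $\theta$ along the flow since $\nabla\cdot u=0$), yields uniform control of $\|u\|_{L^2}$, $\|\theta\|_{L^p}$ and, after an It\^o computation on the enstrophy, of $\|\nabla u\|_{L^2}=\|w\|_{L^2}$ up to the buoyancy forcing $\theta e_d$.

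Next I would derive the evolution of the higher quantity $\|\nabla w\|_{L^4}^4+\|\nabla\theta\|_{L^4}^4$. Applying $\nabla$ and $\mathrm{curl}$ to the equations and using the Moser and commutator bounds of Lemma \ref{lem2.1} and the H\"older-type estimates of Lemma \ref{lem2.2}, the troublesome nonlinear contributions organize into terms of the form $\|\nabla u\|_{L^\infty}(\|\nabla w\|_{L^4}^4+\|\nabla\theta\|_{L^4}^4)$ plus lower-order coupling between the two equations; here the additive nature of the noise (condition (\ref{2.10})) keeps the stochastic term linear, so the It\^o correction and the martingale part are handled by the Burkholder-Davis-Gundy inequality (\ref{2.4}) and the $\mathbb{W}^{2,4}$ regularity of $f$. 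The point of working in two dimensions is that the logarithmic Sobolev inequality allows $\|\nabla u\|_{L^\infty}$ to be bounded by $\|\nabla w\|_{L^4}$ up to a logarithmic factor, so that the differential inequality for $Y(t):=\|\nabla w\|_{L^4}^4+\|\nabla\theta\|_{L^4}^4$ takes the shape $dY\lesssim Y\log(e+Y)\,dt+(\text{l.o.t.})\,dt+\text{mart.}$ This is precisely where I would invoke a \emph{stochastic analogue of the logarithmic Gronwall lemma} to conclude $\sup_{t\in[0,\xi\wedge T]}Y(t)<\infty$ a.s.; the delicate part, as the introduction warns, is that the coupling of $u$ and $\theta$ forces extra technical estimates (cf.\ Proposition 5.2) beyond the scalar Euler case.

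Finally, with $\sup_{t\in[0,\xi\wedge T]}\|\nabla w,\nabla\theta\|_{L^4}<\infty$ in hand, I would close the argument through the non-blow-up criterion. The observation is that this $L^4$ bound is \emph{not itself} a blow-up exclusion for the maximal solution, because the stopping rule in Definition \ref{def2.2} is phrased in terms of $\|\nabla u\|_{L^\infty}$. So I would establish that $\|\nabla u\|_{L^\infty}$ can be controlled by $\|\nabla w\|_{L^4}$ together with $\|\nabla u\|_{L^2}$ via the Sobolev embedding (in two dimensions $W^{1,4}\hookrightarrow L^\infty$) and the Biot-Savart law reconstructing $u$ from $w=\mathrm{curl}\,u$ under $\nabla\cdot u=0$. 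Since both controlling quantities are finite on $[0,\xi\wedge T]$ a.s., we obtain $\sup_{t\in[0,\xi\wedge T]}\|\nabla u(t)\|_{L^\infty}<\infty$ a.s., which contradicts the defining property of $\xi$ on $\{\xi<\infty\}$ for arbitrary $T$; hence $\mathbb{P}(\xi<\infty)=0$, i.e.\ $\xi=\infty$ a.s. Uniqueness is inherited from the pathwise uniqueness already supplied by Theorem \ref{the2.1}. The main obstacle I anticipate is the second step: producing a closed differential inequality for $Y(t)$ in the coupled system with only the logarithmic loss, since the absence of temperature diffusion means $\nabla\theta$ is governed by a pure transport equation and its $L^4$ norm must be absorbed using the same $\|\nabla u\|_{L^\infty}$ factor, requiring the careful structural cancellations and the stochastic log-Gronwall device.
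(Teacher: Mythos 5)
Your proposal follows essentially the same route as the paper's Section 5: the same two-tier estimate culminating in $\sup_{t\in[0,\xi\wedge T]}\|\nabla w,\nabla\theta\|_{L^{4}}<\infty$ via the stochastic logarithmic Gronwall lemma (the paper's Lemma 5.1, with the Brezis--Wainger inequality supplying the logarithmic bound on $\|\nabla u\|_{L^{\infty}}$), followed by the non-blow-up step through the Biot--Savart law and Sobolev embedding, with uniqueness inherited from the local theory. The only cosmetic difference is that you contradict Definition 2.2 directly, whereas the paper formalizes this last step as Proposition 5.1, first closing an $H^{s}$ estimate with the stochastic Gronwall lemma to show $\tau_{R}\leq\xi$ and then letting $R\to\infty$.
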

For the following large deviation result, we consider the system below in 2D involving additive noise,
\begin{eqnarray}\label{2.12}
\left\{\begin{array}{ll}
du^{\epsilon}+Au^{\epsilon}dt+P(u^{\epsilon}\cdot\nabla)u^{\epsilon}dt=P\theta^{\epsilon} e_{2}dt+\sqrt{\epsilon}Pf d\mathcal{W},\\
d\theta^{\epsilon}+(u^{\epsilon}\cdot\nabla)\theta^{\epsilon}dt=0.
\end{array}\right.
\end{eqnarray}

\begin{theorem}\label{the2.3} Suppose that the initial data $(u_{0},\theta_{0})$ is an $X^{s}\times H^{s}$-valued $\mathcal{F}_{0}$-measurable random variable with integer $s>2$, and
\begin{eqnarray}\label{2.13}
f\in L^{p}(\Omega,\mathcal{C}([0,T];L_{Q}(H_{0};H^{s})))\cap L^{4}(\Omega,\mathcal{C}([0,T];\mathbb{W}^{2,4})), ~{\rm for} ~p\geq2.
\end{eqnarray}
Then, the solution $(u^{\epsilon},\theta^{\epsilon})$, $\epsilon\in (0,1]$ to system (\ref{2.12}) satisfies the large deviation principle in $\mathcal{X}$ with good rate function
\begin{eqnarray*}
I(U)=\inf_{\{h\in L^{2}(0,T;H_{0}):U=\mathcal{G}^{0}(\int_{0}^{\cdot}h(t)dt)\}}\left\{\frac{1}{2}\int_{0}^{T}\|h\|_{0}^{2}dt\right\}.
\end{eqnarray*}
\end{theorem}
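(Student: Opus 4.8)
The plan is to apply the weak convergence approach of Budhiraja and Dupuis \cite{Budhiraja, Dupuis}. It suffices to exhibit a measurable map $\mathcal{G}^0:\mathcal{C}([0,T];H)\to\mathcal{X}$ and to verify two conditions relative to the family of level sets $S_N=\{h\in L^2(0,T;H_0):\int_0^T\|h\|_0^2\,ds\le N\}$ equipped with the weak topology: (i) for every $N<\infty$ the set $K_N=\{\mathcal{G}^0(\int_0^\cdot h(s)ds):h\in S_N\}$ is compact in $\mathcal{X}$ (which is exactly what makes $I$ a \emph{good} rate function); and (ii) whenever $\{h^\epsilon\}\subset S_N$ is a family of predictable processes converging in distribution, in the weak topology of $S_N$, to some $h$, then $\mathcal{G}^\epsilon(\mathcal{W}+\tfrac{1}{\sqrt\epsilon}\int_0^\cdot h^\epsilon\,ds)\to\mathcal{G}^0(\int_0^\cdot h\,ds)$ in distribution on $\mathcal{X}$. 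Here $\mathcal{G}^\epsilon$ is the solution map of \eqref{2.12}, and by the Girsanov theorem $U^\epsilon:=\mathcal{G}^\epsilon(\mathcal{W}+\tfrac{1}{\sqrt\epsilon}\int_0^\cdot h^\epsilon\,ds)=(u^\epsilon,\theta^\epsilon)$ solves the stochastic controlled system obtained from \eqref{2.12} by adding the drift $Pf h^\epsilon$ to the momentum equation, while $U^h:=\mathcal{G}^0(\int_0^\cdot h\,ds)=(u^h,\theta^h)$ solves the corresponding deterministic skeleton system with forcing $Pf h$. Conditions (i) and (ii) then yield the LDP with the stated rate function.

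The first task is the global existence and uniqueness for both the skeleton system (which defines $\mathcal{G}^0$) and the stochastic controlled system, together with a priori bounds that are \emph{uniform in} $\epsilon\in(0,1]$ and in $h\in S_N$. As emphasized in the introduction, these bounds cannot be obtained from the uncontrolled estimates behind Theorem \ref{the2.2} via Girsanov, since the density $\exp(\tfrac{1}{\epsilon}\int_0^t\|h\|_0^2\,ds)$ is not uniformly bounded in $L^2$ as $\epsilon\to0$. Instead I would rerun the argument of Section 5 directly on the controlled systems: testing the momentum and temperature equations in $X^s$ and $H^s$, controlling the coupling $P\theta e_2$ and the new drift $Pf h$ by Cauchy--Schwarz in time (the factor $\|h\|_0$ being integrable on $[0,T]$ since $h\in S_N$), and then invoking the logarithmic Gronwall lemma and the non-blowup criterion of Section 5, in their stochastic form for the controlled system, to bound $\|\nabla u\|_{L^\infty}$ by $\|\nabla w\|_{L^4}$ and $\|\nabla u\|_{L^2}$. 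This yields $u^\epsilon,u^h$ bounded in $L^\infty(0,T;X^s)\cap L^2(0,T;X^{s+1})$ and $\theta^\epsilon,\theta^h$ bounded in $L^\infty(0,T;H^s)$, uniformly over $\epsilon$ and $h\in S_N$.

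For condition (i), given a sequence $h_n\in S_N$ I would use weak compactness of the bounded convex set $S_N$ in $L^2(0,T;H_0)$ to extract $h_n\rightharpoonup h\in S_N$, and then show $U^{h_n}\to U^h$ strongly in $\mathcal{X}$. The uniform bounds above, fed back into the equations, give fractional-in-time regularity: the viscous term yields $u^{h_n}\in W^{\alpha,2}(0,T;X^{s-1})$, while the transport equation gives $\theta^{h_n}\in W^{1,2}(0,T;H^{s-1})$, losing one space derivative. The compact embeddings of Lemma \ref{lem2.3}, using $X^s\hookrightarrow\hookrightarrow X^{s-1}$ and $H^s\hookrightarrow\hookrightarrow H^{s-1}$, then furnish strong convergence of a subsequence in $\mathcal{X}$; passing to the limit in the weakly forced skeleton equations identifies the limit as $U^h$, and uniqueness upgrades this to convergence of the full sequence, establishing compactness of $K_N$.

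Condition (ii) is the heart of the matter and, following \cite{Millet}, I would verify it by a tightness argument rather than a pathwise one, precisely because $\mathcal{X}$ is the ``nonoptimal'' space dictated by the absence of diffusion in the temperature equation. Using the uniform bounds together with the Burkholder--Davis--Gundy inequality \eqref{2.4} to control the vanishing stochastic term $\sqrt\epsilon\,Pf\,d\mathcal{W}$, I would prove tightness of the laws of $(u^\epsilon,\theta^\epsilon,h^\epsilon)$ in $\mathcal{X}\times S_N$, apply the Skorohod (or Jakubowski) representation theorem to obtain almost surely convergent copies on a new probability space, and pass to the limit in the controlled system to identify the limit with $\mathcal{G}^0(\int_0^\cdot h\,ds)$. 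The main obstacle throughout is the temperature compactness: with no parabolic smoothing available for $\theta$, the only gain of compactness comes from the transport structure, so tightness in $\mathcal{C}([0,T];H^{s-1})$ must be extracted from the uniform $L^\infty(0,T;H^s)$ bound and the $W^{1,2}(0,T;H^{s-1})$ time regularity through Lemma \ref{lem2.3}, and the coupled nonlinearities must be handled with the Moser and commutator estimates of Lemmas \ref{lem2.1} and \ref{lem2.2} to close the limit identification.
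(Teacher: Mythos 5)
Your proposal is correct and follows essentially the same route as the paper: the Budhiraja--Dupuis weak convergence criterion, Girsanov for well-posedness of the controlled system, direct uniform-in-$\epsilon$ estimates obtained by rerunning the Section 5 vorticity/log-Gronwall machinery on the controlled equations (precisely because the Girsanov density is not uniformly square-integrable), compactness of the level sets $K_M$ via weak compactness of $S_M$ plus the $W^{1,2}$-in-time regularity and Lemma \ref{lem2.3}, and the weak convergence of $U^\epsilon_{h_\epsilon}$ via tightness and the Jakubowski--Skorohod representation with limit identification as in \cite{Millet}. Your emphasis on the temperature equation's lack of smoothing forcing the ``nonoptimal'' space $\mathcal{X}$ and the transport-based $W^{1,2}(0,T;H^{s-1})$ compactness is exactly the paper's Propositions \ref{pro6.1} and \ref{pro6.2}.
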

We have reserved the details on the notation used above for Section 6.

\bigskip

\section{The existence of pathwise solution in $H^{s'}$}\label{sec3}
\setcounter{equation}{0}

In this section, we shall establish the existence and uniqueness of strong pathwise solution in $H^{s+1}$ given in four subsections. Due to the fact that only partial diffusion appears in system \eqref{Equ1.1}, to obtain uniform a priori estimates, in the spirit of \cite{Kim1}, we multiply the nonlinear terms by a smooth cut-off function depending on the size of $\|\nabla u\|_{L^{\infty}}$. Specifically, we first consider the trunction system of the form:
\begin{eqnarray}\label{3.1}
\left\{\begin{array}{ll}
du+Au dt+\varphi_{R}(\|\nabla u\|_{L^{\infty}})P(u\cdot\nabla)udt=P\theta e_{d}dt+Pf(u,\theta)d\mathcal{W},\\
d\theta+\varphi_{R}(\|\nabla u\|_{L^{\infty}})(u\cdot\nabla )\theta dt=0,\\
u(0,x)=u_{0}(x),~~~\theta(0,x)=\theta_{0}(x),
\end{array}\right.
\end{eqnarray}
where $\varphi_{R}:[0,\infty)\rightarrow [0,1]$ is $C^{\infty}$-smooth function defined as follows:
\begin{eqnarray*}
 \varphi_{R}(x) =\left\{\begin{array}{ll}
                  1,& \mbox{if} \ 0<x<R,  \\
                  0,& \mbox{if} \ x>2R.  \\
                \end{array}\right.
\end{eqnarray*}
Applying the operator $\nabla$ to both sides of the transport equation $(\ref{3.1})_2$, taking the inner product with $\nabla\theta|\nabla\theta|^{p-2}$ and integrating from $0$ to $t$, then passing $p\rightarrow \infty$, we obtain,
\begin{equation*}
\|\nabla \theta\|_{L^{\infty}}\leq \|\nabla \theta_{0}\|_{L^{\infty}}{\rm exp}\left(\int_{0}^{T}\varphi_{R}(\|\nabla u\|_{L^{\infty}})\|\nabla u\|_{L^{\infty}}dt\right),
\end{equation*}
for $t\in[0,T]$. Assume that $\|\nabla \theta_{0}\|_{L^{\infty}}\leq R$ a.s. then we have
\begin{equation}\label{3.2}
\|\nabla \theta\|_{L^{\infty}}\leq \|\nabla \theta_{0}\|_{L^{\infty}}{\rm exp}(2RT)\leq R{\rm exp}(2RT),
\end{equation}
where the constant $R$ is the same as in $\varphi_{R}$.

Here, we give the definition of martingale solution to system (\ref{3.1}).
\begin{definition}\label{adef2.1}\rm{(martingale solution)} Let $\Gamma$ be a Borel probability measure on $H^{s'}(\mathbb{T}^{3})\times X^{s'}(\mathbb{T}^{3})$ for integer $s'=s+1$. We say $\{(\Omega, \mathcal{F}, \{\mathcal{F}_{t}\}_{t\geq 0}, \mathbb{P}),u,\theta,\mathcal{W}\}$ is a global martingale solution to system (\ref{3.1}) with initial data law $\Gamma$ if the following conditions hold:\\
(i) $(\Omega, \mathcal{F}, \{\mathcal{F}_{t}\}_{t\geq 0}, \mathbb{P})$ is a stochastic basis and $\mathcal{W}$ is a Wiener process,\\
(ii) the processes
\begin{eqnarray*}
&&\theta(\cdot)\in L^{p}(\Omega;\mathcal{C}([0,T];H^{s'}),\\
&&u(\cdot)\in L^{p}(\Omega; \mathcal{C}([0,T];X^{s'}))\cap L^{p}(\Omega;L^{2}(0,T;X^{s'+1})),
\end{eqnarray*}
are progressively measurable, for all  $ T>0$, \\
(iii) $\Gamma=\mathbb{P}\circ(u_{0}, \theta_{0})^{-1}$,\\
(iv) for all $\phi\in \mathcal{C}^{\infty}(\mathbb{T}^{3})$ and $t\in [0,T]$, the following holds $\mathbb{P}$ a.s.
\begin{eqnarray*}
\left\{\begin{array}{ll}
u(t)+\int_{0}^{t} Au+\varphi_{R}(\|\nabla u\|_{L^{\infty}})P(u\cdot\nabla )udr=u(0)+\int_{0}^{t}P\theta e_{d}dr+\int_{0}^{t}Pf(u,\theta)d\mathcal{W},\\
\theta(t)+\int_{0}^{t}\varphi_{R}(\|\nabla u\|_{L^{\infty}})(u\cdot\nabla)\theta dr=\theta(0).\\
\end{array}\right.
\end{eqnarray*}
\end{definition}

As mentioned in the introduction, the cut-off function brings trouble in proving the uniqueness which needed for the process of passing from martingale solution to pathwise solution. Therefore, we first establish the existence of martingale solution to \eqref{3.1} in $H^{s'}$ for some fixed integer $s'=s+1$ where the initial data also lies in $H^{s'}$. For this larger $s'$, we can overcome this difficulty of uniqueness which makes it possible to get the pathwise solution.

\subsection{
The approximate solution and a priori estimates}

Suppose $\{\phi_{j}\}_{j=1}^{\infty}$ is the complete orthonormal basis of $L_{\rm div}^2(\mathbb{T}^d)$ of eigenfunctions of the Stokes operator $A$, then let $P_{n}$ be the orthogonal projection from $L_{\rm div}^2(\mathbb{T}^d)$ into span$\{\phi_{1},\cdots\phi_{n}\}$, given by
\begin{equation*}
P_{n}:v\rightarrow v_{n}=\sum\limits_{j=1}^{n}(v,\phi_{j})\phi_{j},~~~\mbox{for all} ~v\in L_{\rm div}^2(\mathbb{T}^d).
\end{equation*}
 To construct the approximate solutions, we apply the mixed method. This technique consists of approximating the momentum equation by the Galerkin scheme and solving the temperature equation directly relative to every approximation solution $u_{n}, n\geq 1$. The approximation scheme is as follows:
\begin{eqnarray}\label{3.3}
\left\{\begin{array}{ll}
du_{n}+Au_{n}dt+\varphi_{R}(\|\nabla u_{n}\|_{L^{\infty}})P_{n}P(u_{n}\cdot\nabla) u_{n}dt\\ \qquad\qquad\qquad\qquad\qquad=P_{n}P\theta_{n} e_{d}dt+P_{n}Pf(u_{n},\theta_n)d\mathcal{W},\\
d\theta_{n}+\varphi_{R}(\|\nabla u_{n}\|_{L^{\infty}})(u_{n}\cdot\nabla )\theta_{n}dt=0,\\
u_{n}(0)=P_{n}u_{0},~~~\theta_{n}(0)=\theta_{0}.
\end{array}\right.
\end{eqnarray}
 At this stage, the approximate velocity field $u_{n}$ is smooth in the space variable $x$, and the equation of temperature admits a classical solution $\theta=\theta(u_{n})$ which shares the same smoothness with the initial data $\theta_{0}$. By (\ref{3.2}), we also have,
\begin{equation}\label{3.4}
\|\nabla \theta_{n}(t,\cdot)\|_{L^{\infty}}\leq {\rm exp}(RT)\|\nabla\theta_{0}\|_{L^{\infty}}\leq R~{\rm exp}(2RT),
\end{equation}
 where the bound is uniform in $n$, and $t\in [0,T]$.  From the equation $(\ref{3.3})_2$, we immediately obtain $\|\theta_{n}\|_{L^{p}}\leq \|\theta_{0}\|_{L^{p}}$ for $t\in [0,T]$ and $p\in[1,\infty]$. With this a priori estimate, the existence of approxiamtion solutions to system (\ref{3.3}) is classical, see \cite{Flan} for further details.

\begin{lemma}\label{lem3.1} Let $s>\frac{d}{2}+1$, $s'=s+1$, $r\geq2$, $\alpha\in [0,\frac{1}{2})$ and suppose that $f$ satisfies conditions (\ref{2.7})-(\ref{2.9}), $u_{0}\in L^{r}(\Omega; X^{s'}), \theta_{0}\in L^{r}(\Omega; H^{s'})$ and for any fixed constant $R$, $\|\nabla\theta_{0}\|_{L^{\infty}}\leq R$ \mbox{a.s.} Then the sequence $\{u_{n}\}_{n\geq 1}$ is uniformly bounded in
\begin{equation*}
L^{r}(\Omega;\mathcal{C}([0,T],X^{s'}))\cap L^{r}(\Omega;C^\alpha([0,T];X^{s'-1}))\cap L^{2}(\Omega\times [0,T];X^{s'+1}),
\end{equation*}
and $\{\theta_{n}\}_{n\geq 1}$ is uniformly bounded in
\begin{equation*}
L^{r}(\Omega;\mathcal{C}([0,T],H^{s'}))\cap L^{r}(\Omega;W^{1,r}(0,T;H^{s'-1})),
\end{equation*}
for any $T>0$. We also have
\begin{eqnarray}
&&\sup_{n\geq 1} \mathbb{E}\left\|u_{n}-\int_{0}^{t}P_{n}P f(u_{n},\theta_{n})d\mathcal{W}\right\|_{W^{1,2}(0,T;X^{s'-1})}^{2}< \infty, \label{3.5}\\
&&\sup_{n\geq 1} \mathbb{E}\|\theta_{n}\|_{W^{1,r}(0,T;H^{s'-1})}^{r}< \infty, \label{3.6}\\
&&\sup_{n\geq 1} \mathbb{E}\left\|\int_{0}^{t}P_{n}P f(u_{n},\theta_{n})d\mathcal{W}\right\|_{C^\alpha([0,T];X^{s'-1})}^{r}< \infty. \label{3.7}
\end{eqnarray}
\end{lemma}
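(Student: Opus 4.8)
The plan is to extract every conclusion from a single top-order energy estimate at level $s'$, carried out on the combined quantity $\|u_n\|_{s'}^2+\|\theta_n\|_{s'}^2$, and then to read off the time-regularity statements (\ref{3.5})--(\ref{3.7}) from the mild form of (\ref{3.3}). The cut-off $\varphi_R$ and the transport bound (\ref{3.4}) are exactly what make the estimate close in spite of the absence of diffusion in the temperature equation.

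First I would apply the It\^o formula to $\|u_n(t)\|_{s'}^2$. Since $u_n=P_nu_n$ and $P_n$ commutes with $A$ and with spatial derivatives, the projection may be dropped when pairing against $u_n$, so the Stokes term produces the dissipation $+2\|u_n\|_{s'+1}^2$; the transport term is estimated by Lemma \ref{lem2.2}(ii) as $\le C\|\nabla u_n\|_{L^\infty}\|u_n\|_{s'}^2$ and then tamed by the cut-off through $\varphi_R(\|\nabla u_n\|_{L^\infty})\|\nabla u_n\|_{L^\infty}\le 2R$; the buoyancy term is bounded by Young's inequality by $\tfrac12(\|u_n\|_{s'}^2+\|\theta_n\|_{s'}^2)$; and the It\^o correction $\|P_nPf(u_n,\theta_n)\|_{L_2(H;X^{s'})}^2$ is controlled by the growth condition (\ref{2.9}), which is precisely why (\ref{2.9}) is imposed, giving $\le C(1+\|u_n\|_{s'}^2+\|\theta_n\|_{s'}^2)$. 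For the temperature I would run the pathwise estimate for $\tfrac{d}{dt}\|\theta_n\|_{s'}^2$, whose only dangerous term is the commutator, bounded via Lemma \ref{lem2.2}(ii) by $C(\|\nabla u_n\|_{L^\infty}\|\theta_n\|_{s'}+\|\nabla\theta_n\|_{L^\infty}\|u_n\|_{s'})\|\theta_n\|_{s'}$, where the first factor is again absorbed by the cut-off and the second by the uniform transport bound $\|\nabla\theta_n\|_{L^\infty}\le R\exp(2RT)$ from (\ref{3.4}). Adding the two estimates yields, with $Y_n:=\|u_n\|_{s'}^2+\|\theta_n\|_{s'}^2$,
\begin{equation*}
dY_n+2\|u_n\|_{s'+1}^2\,dt\le C_{R,T}(1+Y_n)\,dt+dM_n,\qquad M_n:=2\big(u_n,P_nPf\,d\mathcal{W}\big)_{s'}.
\end{equation*}
Raising this to the power $r/2$ (a second application of It\^o), taking the supremum in time, treating $M_n$ by the Burkholder--Davis--Gundy inequality (\ref{2.4}) and Young's inequality to absorb the martingale, and closing with Gronwall's lemma gives the uniform bounds in $L^r(\Omega;\mathcal{C}([0,T];X^{s'}))$ and $L^r(\Omega;\mathcal{C}([0,T];H^{s'}))$; the dissipation term, kept on the left at the level $r=2$, simultaneously yields the uniform bound in $L^2(\Omega\times[0,T];X^{s'+1})$.

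For the time regularity I would split $u_n=P_nu_0-\int_0^t\!\big(Au_n+\varphi_RP_nP(u_n\cdot\nabla)u_n-P_nP\theta_n e_d\big)ds+\int_0^tP_nPf\,d\mathcal{W}$. The time derivative of the deterministic part is estimated in $X^{s'-1}$ by $\|Au_n\|_{s'-1}\le C\|u_n\|_{s'+1}$ (square-integrable in $(t,\omega)$ by the previous step), by Lemma \ref{lem2.2}(i) together with the cut-off for the nonlinear term $\varphi_R\|P(u_n\cdot\nabla)u_n\|_{s'-1}$, and by $\|\theta_n\|_{s'-1}$; this gives (\ref{3.5}). Estimate (\ref{3.6}) follows in the same way from $\tfrac{d}{dt}\theta_n=-\varphi_R(u_n\cdot\nabla)\theta_n$, Lemma \ref{lem2.2}, the bound (\ref{3.4}) and the moment estimates already obtained. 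For (\ref{3.7}) I would invoke the Burkholder--Davis--Gundy inequality and the standard regularity estimate for stochastic integrals in $W^{\alpha,r}(0,T;X^{s'-1})$ for $\alpha<\tfrac12$ (using the linear growth (\ref{2.7}) at level $s'-1$ and the $L^r$-moment bounds), which after the time-Sobolev embedding delivers the $\mathcal{C}^\alpha$-bound; combined with the $\mathcal{C}^{1/2}\hookrightarrow\mathcal{C}^\alpha$ regularity of the drift this also produces the uniform $L^r(\Omega;\mathcal{C}^\alpha([0,T];X^{s'-1}))$ bound on $u_n$ itself.

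The main obstacle is the first, coupled $H^{s'}$ energy estimate: because the temperature equation carries no dissipation, $\|\theta_n\|_{s'}$ cannot be controlled in isolation and must be closed jointly with $\|u_n\|_{s'}$ through the single Gronwall inequality for $Y_n$. The two genuinely top-order contributions---$\|\nabla u_n\|_{L^\infty}\|u_n\|_{s'}^2$ in the momentum balance and $\|\nabla\theta_n\|_{L^\infty}\|u_n\|_{s'}\|\theta_n\|_{s'}$ in the temperature balance---are exactly the terms that would otherwise destroy the a priori $L^r(\Omega;H^{s'})$ bound, and the whole scheme hinges on removing them via the cut-off $\varphi_R$ and the transport-derived bound (\ref{3.4}). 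Checking that these survive the commutator structure of Lemma \ref{lem2.2}, and that all resulting constants are independent of $n$ (so that the bounds are truly uniform), is the delicate point.
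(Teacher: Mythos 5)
Your proposal is correct and follows essentially the same route as the paper's proof: a coupled top-order It\^o energy estimate for $\|u_n\|_{s'}^2+\|\theta_n\|_{s'}^2$ raised to the power $r/2$, closed by the cut-off $\varphi_R$, the transport bound \eqref{3.4}, the Burkholder--Davis--Gundy inequality and Gronwall, followed by the drift/stochastic-integral splitting for \eqref{3.5}--\eqref{3.7} and a H\"older-in-time embedding to get the $C^\alpha$ bound on $u_n$ itself. The one technical device the paper uses that your sketch omits is the localization by the stopping times $\tau_K=\inf\{t\geq 0:\sup_{s\in[0,t]}\|u_n,\theta_n\|_{s'}\geq K\}$ before absorbing $\tfrac12\,\mathbb{E}\bigl(\sup_{s\leq t\wedge\tau_K}\|u_n\|_{s'}^r\bigr)$ and applying Gronwall (that expectation is not known to be finite a priori), with $K\to\infty$ at the end via monotone convergence.
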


\begin{proof} In order to establish the desired  compactness property in the set of probability measures induced by the distribution of the approximation solution $(u_{n},\theta_{n})$ and pass the limit, we need the uniform estimates on higher integrable of $\|u_{n}\|_{s'}^{2}$ and $\|\theta_{n}\|_{s'}^{2}$. Therefore, for any $r\geq2$, applying the It\^{o} formula to $d(\|u_{n}\|_{s'}^{2})^{\frac{r}{2}}$ yields,
\begin{eqnarray*}
&&d\|u_{n}\|_{s'}^{r}+r\|u_{n}\|_{s'}^{r-2}\|u_{n}\|_{s'+1}^{2}dt
=-r\varphi_{R}(\|\nabla u_{n}\|_{L^{\infty}})\|u_{n}\|_{s'}^{r-2}(u_{n},(u_{n}\cdot\nabla) u_{n})_{s'}dt\\
&&\qquad\qquad\quad+r\|u_{n}\|_{s'}^{r-2}(u_{n},\theta_{n}e_{d})_{s'}dt
+\frac{r}{2}\|u_{n}\|_{s'}^{r-2}\|P_{n}Pf(u_{n},\theta_{n})\|_{L_{2}(H;X^{s'})}^{2}dt\\
&&\qquad\qquad\quad+\frac{r(r-2)}{2}\|u_{n}\|_{s'}^{r-4}(u_{n},Pf(u_{n},\theta_{n}))_{s'}^{2}dt
+r\|u_{n}\|_{s'}^{r-2}(u_{n},Pf(u_{n},\theta_{n}))_{s'}d\mathcal{W}\\
&&\qquad\qquad\quad=(I_{1}+I_{2}+I_{3}+I_{4})dt+I_{5}d\mathcal{W},
\end{eqnarray*}
and
\begin{align*}
d\|\theta_{n}\|_{s'}^{r}&=d(\|\theta_{n}\|_{s'}^{2})^{\frac{r}{2}}=\frac{r}{2}\|\theta_{n}\|_{s'}^{r-2}d(\|\theta_{n}\|_{s'}^{2})\\
&=-r\varphi_{R}(\|\nabla u_{n}\|_{L^{\infty}})\|\theta_{n}\|_{s'}^{r-2}(\theta_{n},u_{n}\cdot \nabla \theta_{n})_{s'}dt
=J_{1}dt.
\end{align*}
Define by $\tau_{K}$ the stopping time
\begin{equation*}
\tau_{K}:={\rm inf}\left\{t\geq 0:\sup_{\gamma\in [0,t]}\|u_{n},\theta_{n}\|_{s'}\geq K\right\}, ~{\rm{for ~any}}~K\in \mathbb{R}^+,
\end{equation*}
if the set is empty, taking $\tau_K=T$. Note that $\tau_K$ is an increasing sequence with $\lim_{K\rightarrow \infty}\tau_K=T$. Hence, taking the integral over time $\gamma\in[0,t\wedge \tau_{K}]$ and then expectation, we have,
\begin{eqnarray*}
&&\quad \mathbb{E}\left(\sup_{\gamma\in [0,t\wedge \tau_{K}]}\|u_{n},\theta_{n}\|_{s'}^{r}\right)+r\mathbb{E}\int_{0}^{t\wedge \tau_{K}}\|u_{n}\|_{s'}^{r-2}\|u_{n}\|_{s'+1}^{2}d\gamma\\
&&\leq \mathbb{E}\int_{0}^{t\wedge \tau_{K}}(|I_{1}|+|I_{2}|+|I_{3}|+|I_{4}|+|J_{1}|)d\gamma
+\mathbb{E}\left(\sup_{\gamma\in[0,t\wedge \tau_{K}]}\left|\int_{0}^{\gamma}I_{5}d\mathcal{W}\right|\right).
\end{eqnarray*}
Next, we estimate $I_{i},i=1,2,3,4,5$ and $J_{1}$ term by term. By Lemma \ref{lem2.2}, we have,
\begin{align}\label{3.8}
 |I_{1}|+|J_{1}|&\leq  C\varphi_{R}(\|\nabla u_{n}\|_{L^{\infty}})\|\nabla u_{n}\|_{L^{\infty}}\|u_{n}\|_{s'}^{r}\nonumber\\
&\quad+C\varphi_{R}(\|\nabla u_{n}\|_{L^{\infty}})((\|\nabla u_{n}\|_{L^{\infty}}
+\|\nabla \theta_{n}\|_{L^{\infty}})\|\theta_{n}\|_{s'}^{r}+\|\nabla \theta_{n}\|_{L^{\infty}}\|u_{n}\|_{s'}^{r}),
\end{align}
where the constant $C=C(s',r, \mathbb{T}^{d})$ is independent of $n$ and $R$. For $I_{2}, I_{3}$ and $I_{4}$, using the H\"{o}lder inequality and condition (\ref{2.9}) yields,
\begin{equation}\label{3.9}
|I_{2}|+|I_{3}|+|I_{4}|\leq C(1+\|u_{n}\|_{s'}^{r}+\|\theta_{n}\|_{H^{s'}}^{r}).
\end{equation}
Regarding the stochastic term, using the Burkholder-Davis-Gundy inequality (\ref{2.4}) and condition (\ref{2.9}),  to obtain,
\begin{eqnarray}\label{3.10}
&&\quad\ \mathbb{E}\left(\sup_{\gamma\in[0,t\wedge \tau_{K}]}\left|\int_{0}^{\gamma}I_{5}d\mathcal{W}\right|\right)
\nonumber\\&&\leq C\mathbb{E}\left(\int_{0}^{t\wedge\tau_{K}}\|u_{n}\|_{s'}^{2r-4}(u_{n},Pf(u_{n},\theta_{n}))_{s'}^{2}d\gamma\right)^{\frac{1}{2}}\nonumber\\
&&\leq C\mathbb{E}\left(\int_{0}^{t\wedge\tau_{K}}\|u_{n}\|_{s'}^{r}(1+\|u_{n},\theta_{n}\|_{s'}^{r})d\gamma\right)^{\frac{1}{2}}\nonumber \\
&&\leq \frac{1}{2}\mathbb{E}\left(\sup_{\gamma\in [0,t\wedge \tau_{K}]}\|u_{n}\|_{s'}^{r}\right)+C\mathbb{E}\int_{0}^{t\wedge \tau_{K}}1+\|u_{n},\theta_{n}\|_{s'}^{r}d\gamma.
\end{eqnarray}
Combining estimates $(\ref{3.4})$ and $(\ref{3.8})$-$(\ref{3.10})$ we have that,
\begin{eqnarray*}
&&\quad \mathbb{E}\left(\sup_{\gamma\in [0,t\wedge \tau_{K}]}\|u_{n}, \theta_{n}\|_{s'}^{r}\right)+\mathbb{E}\int_{0}^{t\wedge \tau_{K}}\|u_{n}\|_{s'}^{r-2}\|u_{n}\|_{s'+1}^{2}d\gamma\\
&&\leq \mathbb{E}\|u_{0},\theta_{0}\|_{s'}^{r}
+C\int_{0}^{t}1+\mathbb{E}\left(\sup_{\xi\in [0,\gamma\wedge \tau_{K}]}\|u_{n},\theta_{n}\|_{s'}^{r}\right)d\gamma,
\end{eqnarray*}
where $C$ is a constant independent of $n$ and $K$ but depends on $(\mathbb{T}^{d}, s',r,R)$. Applying the Gronwall inequality we arrive at,
\begin{equation*}
\mathbb{E}\left(\sup_{\gamma\in [0,t\wedge \tau_{K}]}\|u_{n},\theta_{n}\|_{s'}^{r}\right)+\mathbb{E}\int_{0}^{t\wedge \tau_{K}}\|u_{n}\|_{s'}^{r-2}\|u_{n}\|_{s'+1}^{2}d\gamma\leq C,
\end{equation*}
for any $T\geq 0$ and some positive finite constant $C=C(\mathbb{T}^{d},T,s',r,R,\mathbb{E}\|u_{0},\theta_{0}\|_{s'}^{r})$ which is independent of $n$ and $K$. Since the stopping time is increasing, by the monotone convergence theorem, we have,
\begin{equation}\label{3.11}
\sup_{n}\mathbb{E}\left(\sup_{t\in [0,T]}\|u_{n},\theta_{n}\|_{s'}^{r}\right)+\sup_{n}\mathbb{E}\int_{0}^{T}\|u_{n}\|_{s'}^{r-2}\|u_{n}\|_{s'+1}^{2}dt\leq C.
\end{equation}
Taking $r=2$ in (\ref{3.11}), we obtain that,
\begin{eqnarray*}
 &&\quad \mathbb{E}\left\|u_{n}-\int_{0}^{t}P_{n}P f(u_{n},\theta_{n})d\mathcal{W}\right\|_{W^{1,2}(0,T;H^{s'-1})}^{2}\\
 &&\leq \mathbb{E}\|u_{0}\|_{s'}^{2}+\mathbb{E}\int_{0}^{T}\|u_{n}\|_{s'+1}^{2}dt\\
 &&\quad+C\mathbb{E}\int_{0}^{T}\varphi_{R}(\|\nabla u_{n}\|_{L^{\infty}})\|(u_{n}\cdot\nabla) u_{n}\|_{s'-1}^{2}dt +C\mathbb{E}\int_{0}^{T}\|P_{n}P\theta e_{d}\|_{s'-1}^{2}dt\\
 &&\leq C\mathbb{E}\left(\sup_{t\in [0,T]}\|u_{n}, \theta_{n}\|_{s'}^{2}\right)\leq C.
\end{eqnarray*}
Lemma \ref{lem2.2} along with (\ref{3.4}) implies,
\begin{eqnarray*}
 &&~\mathbb{E}\|\theta_{n}\|_{W^{1,r}(0,T;H^{s'-1})}^{r}\leq \mathbb{E}\|\theta_{0}\|_{H^{s'}}^{r}+
 \mathbb{E}\int_{0}^{T}\varphi_{R}(\|\nabla u_{n}\|_{L^{\infty}})\|(u_{n}\cdot\nabla)\theta_{n}\|_{s'-1}^{2}dt\\
 &&~~\qquad\qquad\qquad\quad\qquad\leq C\mathbb{E}\left(\sup_{t\in [0,T]}\|u_{n}, \theta_{n}\|_{s'}^{r}\right)\leq C,
\end{eqnarray*}
where constant $C$ is independent of $n$ but depends on $(s',T, r, R, \mathbb{E}\|u_{0}, \theta_{0}\|_{s'}^{r})$.
In order to obtain (\ref{3.7}), using (\ref{3.11}) and the Burkholder-Davis-Gundy inequality (\ref{2.4}) again, which yields,
\begin{eqnarray*}
 &&\quad \mathbb{E}\left\|\int_{0}^{t}P_{n}P f(u_{n},\theta_{n})d\mathcal{W}\right\|_{C^{\alpha}([0,T];H^{s'-1})}^{r} \\
 &&\leq\mathbb{E}\left(\sup_{t,s\in[0,T]}\frac{\left\|\int_{s}^{t} P_{n}P f(u_{n},\theta_{n})d\mathcal{W}\right\|^r_{H^{s'-1}}}{|t-s|^{\alpha r}}\right)\\
 &&\leq\mathbb{E}\frac{\left\|\int_{s}^{t} P_{n}P f(u_{n},\theta_{n})d\mathcal{W}\right\|^r_{H^{s'-1}}}{|t-s|^{\alpha r}}+\delta'\\
&&\leq \frac{\mathbb{E}\left(\int_{s}^{t}\|P_{n}P f(u_{n},\theta_{n})\|_{H^{s'-1}}^2d\xi\right)^{\frac{r}{2}}}{|t-s|^{\alpha r}}+\delta'\\
&&\leq \frac{(t-s)^\frac{r}{2}\cdot\mathbb{E}\left(1+\sup_{t\in [0,T]}\|u_{n},\theta_{n}\|_{s'}^{r}\right)}{|t-s|^{\alpha r}}+\delta'\\
&&\leq C|t-s|^{\left(\frac{1}{2}-\alpha\right)r}+\delta'\leq C,
\end{eqnarray*}
where the constant $C$ is independent of $n$ but depends on $(s',T, r, R, \mathbb{E}\|u_{0}, \theta_{0}\|_{s'}^{r})$.

Using the embedding
\begin{eqnarray*}
W^{\beta, r}(0,T;X^{s'-1})\hookrightarrow C^\alpha(0,T;X^{s'-1}), ~{\rm if}~\alpha<\beta-\frac{1}{r},
\end{eqnarray*}
and (\ref{3.5}), (\ref{3.7}), we have
\begin{eqnarray*}
u_n\in L^{r}(\Omega;C^\alpha([0,T];X^{s'-1})),
\end{eqnarray*}
for $\alpha\in [0,\frac{1}{2})$. This completes the proof of the Lemma.
\end{proof}

\subsection{
Tightness and existence of martingale solution}

Let $\{u_{n},\theta_{n}\}_{n\geq 1}$ be the sequence of approximation solutions to system (\ref{3.3}) relative to a fixed stochastic basis $(\Omega, \mathcal{F},\{\mathcal{F}_{t}\}_{t\geq0}, \mathbb{P},\mathcal{W})$ and $\mathcal{F}_{0}$-measurable random variable $(u_{0},\theta_{0})$. We define the path space
\begin{equation*}
\mathcal{X}=\mathcal{X}_{u}\times \mathcal{X}_{\theta}\times \mathcal{X}_{\mathcal{W}},
\end{equation*}
where $\mathcal{X}_{u}=\mathcal{C}([0,T]; X^{s'-1})\cap L^{2}(0,T;X^{s'})$,~~$\mathcal{X}_{\theta}=\mathcal{C}([0,T]; H^{s'-1})$,~~$\mathcal{X}_{\mathcal{W}}=\mathcal{C}([0,T];H)$. Define the probability measures,
\begin{equation}\label{measure}
\mu^{n}=\mu^{n}_{u}\otimes \mu^{n}_{\theta}\otimes \mu_{\mathcal{W}},
\end{equation}
where $\mu^{n}_{u}(\cdot)=\mathbb{P}\{u_{n}\in \cdot\}$, ~$\mu^{n}_{\theta}=\mathbb{P}\{\theta_{n}\in \cdot\}$,~$\mu_{\mathcal{W}}=\mathbb{P}\{\mathcal{W}\in \cdot\}$. Here, the embedding $H^{s'-1}\hookrightarrow H^{1,\infty}$ holds, which is required by passing the limit in cut-off operator. In the following lemma, we show that the set $\{\mu^{n}\}_{n\geq 1}$ is in fact weakly compact.

\begin{lemma}\label{lem3.2}
 The set of the sequence of measures $\{\mu^{n}\}_{n\geq 1}$ defined by (\ref{measure}) is tight on path space $\mathcal{X}$.
\end{lemma}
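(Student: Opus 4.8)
The plan is to prove tightness of each marginal family $\{\mu^n_u\}$, $\{\mu^n_\theta\}$, and $\{\mu_{\mathcal{W}}\}$ separately, since the product measure $\mu^n$ is tight on the product space $\mathcal{X}$ if and only if each factor is tight on its own path space. The measure $\mu_{\mathcal{W}}$ is constant in $n$, hence trivially tight, so the real work is in the $u$ and $\theta$ components. For each, I would exhibit compact subsets of $\mathcal{X}_u$ and $\mathcal{X}_\theta$ whose complements carry uniformly small mass, invoking the uniform bounds already collected in Lemma \ref{lem3.1} together with the compact embeddings of Lemma \ref{lem2.3}.

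For the velocity component, the natural strategy is to take the compact set to be a ball in the finer space $L^2(0,T;X^{s'+1})\cap W^{\alpha,2}(0,T;X^{s'-1})$ (or the intersection with a ball in $\mathcal{C}^\alpha([0,T];X^{s'-1})$), which embeds compactly into $\mathcal{X}_u=\mathcal{C}([0,T];X^{s'-1})\cap L^2(0,T;X^{s'})$ by Lemma \ref{lem2.3}. Concretely, I would set
\begin{equation*}
B_M = \left\{ v : \|v\|_{L^2(0,T;X^{s'+1})}^2 + \|v\|_{W^{\alpha,2}(0,T;X^{s'-1})}^2 \leq M \right\},
\end{equation*}
which is relatively compact in $\mathcal{X}_u$. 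A small subtlety is that the stochastic integral term in (\ref{3.3}) is only Hölder continuous of order strictly below $\tfrac12$ in time, so $u_n$ itself need not lie in $W^{1,2}(0,T;X^{s'-1})$; this is precisely why Lemma \ref{lem3.1} decomposes $u_n$ as the sum of a part with an $W^{1,2}$ bound (estimate (\ref{3.5})) and the stochastic integral with a $\mathcal{C}^\alpha$ bound (estimate (\ref{3.7})). I would therefore estimate the $W^{\alpha,2}$-norm of $u_n$ by splitting it along this decomposition, using that $W^{1,2}\hookrightarrow W^{\alpha,2}$ for $\alpha\in(0,1)$ and that $\mathcal{C}^\alpha\hookrightarrow W^{\alpha,2}$. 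Then $\mu^n_u(B_M^c)$ is controlled via the Chebyshev (Markov) inequality by $\tfrac{1}{M}$ times the uniform expectation bounds, so choosing $M$ large makes this uniformly small in $n$.

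For the temperature component the argument is simpler because there is no stochastic forcing in the transport equation: Lemma \ref{lem3.1} gives a uniform bound for $\theta_n$ in $L^r(\Omega;\mathcal{C}([0,T];H^{s'}))\cap L^r(\Omega;W^{1,r}(0,T;H^{s'-1}))$, via (\ref{3.6}). Taking a compact set of the form $\{w : \|w\|_{\mathcal{C}([0,T];H^{s'})} + \|w\|_{W^{1,r}(0,T;H^{s'-1})} \leq M\}$, which is relatively compact in $\mathcal{X}_\theta = \mathcal{C}([0,T];H^{s'-1})$ by the compactness of $H^{s'}\hookrightarrow H^{s'-1}$ and the Arzelà–Ascoli / Lemma \ref{lem2.3} criterion, and again applying Chebyshev with the uniform moment bounds, makes $\mu^n_\theta$ of the complement uniformly small.

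I expect the main obstacle to be the velocity marginal, specifically verifying the fractional-in-time regularity needed for the compact embedding. The issue is that, unlike in a deterministic or fully parabolic setting, one cannot bound $u_n$ directly in $W^{1,2}(0,T;X^{s'-1})$ because of the low time-regularity of the Itô integral. The remedy is exactly the two-piece decomposition recorded in Lemma \ref{lem3.1}: one must carefully confirm that the sum of a $W^{1,2}$-bounded drift part and a $\mathcal{C}^\alpha$-bounded martingale part lands in a $W^{\alpha,2}$ ball whose radius has uniform expectation, so that the Chebyshev estimate closes uniformly in $n$. Once this fractional regularity is in hand, the compactness follows mechanically from Lemma \ref{lem2.3}, and assembling the three marginal tightness statements into tightness of the product measure $\{\mu^n\}$ on $\mathcal{X}$ completes the proof.
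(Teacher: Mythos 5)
Your overall skeleton --- marginal tightness plus Chebyshev against the uniform moment bounds of Lemma \ref{lem3.1}, compact embeddings from Lemma \ref{lem2.3}, and the drift/martingale decomposition of $u_{n}$ --- is exactly the paper's skeleton, and your temperature argument is essentially the paper's. But your velocity argument has a genuine gap: the set
$B_{M}=\{v:\|v\|_{L^{2}(0,T;X^{s'+1})}^{2}+\|v\|_{W^{\alpha,2}(0,T;X^{s'-1})}^{2}\leq M\}$
is \emph{not} relatively compact in $\mathcal{X}_{u}=\mathcal{C}([0,T];X^{s'-1})\cap L^{2}(0,T;X^{s'})$. In the only range you can reach, $\alpha\in(0,\tfrac12)$, the space $W^{\alpha,2}(0,T;X)$ does not even embed into $\mathcal{C}([0,T];X)$: for instance $v(t)=\mathbf{1}_{[T/2,T]}(t)\phi$ with $\phi$ smooth and divergence-free lies in $L^{2}(0,T;X^{s'+1})\cap W^{\alpha,2}(0,T;X^{s'-1})$ for every $\alpha<\tfrac12$, so a $W^{\alpha,2}$ ball controls no modulus of continuity, and the first embedding of Lemma \ref{lem2.3} only yields compactness in $L^{2}(0,T;X^{s'})$, not in the uniform-in-time factor of $\mathcal{X}_{u}$. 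Your closing claim that ``the compactness follows mechanically from Lemma \ref{lem2.3}'' once a uniform $W^{\alpha,2}$ bound is in hand repeats this error: fractional Sobolev regularity in time settles the $L^{2}(0,T;X^{s'})$ half of $\mathcal{X}_{u}$ but says nothing about the $\mathcal{C}([0,T];X^{s'-1})$ half.

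The missing ingredient is a second compact set adapted to the uniform topology, and this is where the decomposition should actually be used. In the paper, the drift part is bounded in $W^{1,2}(0,T;X^{s'-1})$ (estimate (\ref{3.5})), hence in $\mathcal{C}^{1/2}([0,T];X^{s'-1})$, and the stochastic integral is bounded in $\mathcal{C}^{\alpha}([0,T];X^{s'-1})$ (estimate (\ref{3.7})); their sum is therefore bounded in $\mathcal{C}^{\alpha}([0,T];X^{s'-1})$ --- a H\"older bound, not merely a $W^{\alpha,2}$ bound. This is then combined with the uniform $\mathcal{C}([0,T];X^{s'})$ bound coming from (\ref{3.11}), and the \emph{second} embedding of Lemma \ref{lem2.3},
$\mathcal{C}([0,T];X^{s'})\cap\mathcal{C}^{\alpha}([0,T];X^{s'-1})\hookrightarrow\hookrightarrow\mathcal{C}([0,T];X^{s'-1})$,
produces a compact set $B_{K}^{2}$; the tightness set is $B_{K}^{1}\cap B_{K}^{2}$, where $B_{K}^{1}$ is your $L^{2}(0,T;X^{s'+1})\cap W^{1/4,2}(0,T;X^{s'-1})$ ball, and Chebyshev is applied to both. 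Your parenthetical ``(or the intersection with a ball in $\mathcal{C}^{\alpha}([0,T];X^{s'-1})$)'' points in the right direction, but note that even that set is not covered by Lemma \ref{lem2.3} as stated, whose Arzel\`a--Ascoli part requires a $\mathcal{C}([0,T];X_{1})$ bound rather than an $L^{2}(0,T;X^{s'+1})$ bound in the first slot; the clean fix is to invoke the $\mathcal{C}([0,T];X^{s'})$ bound that Lemma \ref{lem3.1} already provides.
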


\begin{proof}
 By applying Lemma \ref{lem2.3}, we deduce that, the embedding
\begin{equation*}
L^{2}(0,T;X^{s'+1})\cap W^{\frac{1}{4},2}(0,T;X^{s'-1})\hookrightarrow L^{2}(0,T;X^{s'}).
\end{equation*}
is compact. For any fixed $K>0$, define the set
\begin{eqnarray*}
 &&B_{K}^{1}:=\bigg\{u\in L^{2}(0,T;X^{s'+1})\cap W^{\frac{1}{4},2}(0,T;X^{s'-1}):\\
 &&\qquad\qquad\qquad \|u\|_{L^{2}(0,T;X^{s'+1})}^{2}+\|u\|_{W^{\frac{1}{4},2}(0,T;X^{s'-1})}^{2}\leq K\bigg\},
\end{eqnarray*}
which is thus compact in $L^{2}(0,T;X^{s'})$. Applying the Chebyshev inequality and the estimates (\ref{3.5}), (\ref{3.7}) and (\ref{3.11}) yield,
\begin{eqnarray}\label{3.12}
&& \mu_{u}^{n}((B_{K}^{1})^{c})=\mathbb{P}\left(\|u_{n}\|_{L^{2}(0,T;X^{s'+1})}^{2}
+\|u_{n}\|_{W^{\frac{1}{4},2}(0,T;X^{s'-1})}^{2}> K\right)\nonumber\\
&&\qquad\qquad\quad\leq \mathbb{P}\left(\|u_{n}\|_{L^{2}(0,T;X^{s'+1})}^{2}> \frac{K}{2}\right)+\mathbb{P}\left(\|u_{n}\|_{W^{\frac{1}{4},2}(0,T;X^{s'-1})}^{2}> \frac{K}{2}\right)\nonumber\\
&&\qquad\qquad\quad\leq \frac{2}{K}\left(\mathbb{E}\int_{0}^{T}\|u_{n}\|_{s'+1}^{2}dt+\mathbb{E}\|u_{n}\|_{W^{\frac{1}{4},2}(0,T;X^{s'-1})}^{2}\right)\leq \frac{C}{K},
\end{eqnarray}
where the constant $C$ is independent of $n$.

Fix any $\alpha\in (0,\frac{1}{2})$, Lemma \ref{lem2.3} gives,
\begin{eqnarray*}
\mathcal{C}([0,T];X^{s'})\cap \mathcal{C}^\alpha([0,T];X^{s'-1})\hookrightarrow\hookrightarrow \mathcal{C}([0,T];X^{s'-1}).
\end{eqnarray*}
Therefore, for any fixed $K\geq 0$, the set,
\begin{eqnarray*}
&&B_{K}^{2}:=\big\{u\in C([0,T];X^{s'})\cap C^{\alpha}([0,T];X^{s'-1}):\\
&&\qquad\qquad\qquad\qquad\|u\|_{C([0,T];X^{s'})}+\|u\|_{C^{\alpha}([0,T];X^{s'-1})}\leq K\big\}
\end{eqnarray*}
is compact in $\mathcal{C}([0,T];X^{s'-1})$. Note that,
\begin{eqnarray*}
\left\{\left\|u_{n}-\int_{0}^{t}P_{n}P f(u_{n},\theta_{n})d\mathcal{W}\right\|_{W^{1,2}(0,T;X^{s'-1})}\right\}\bigcap \left\{\left\|\int_{0}^{t}P_{n}P f(u_{n},\theta_{n})d\mathcal{W}\right\|_{C^{\alpha}([0,T];X^{s'-1})}\right\},
\end{eqnarray*}
is a subset of $ \{u_{n}\in C^{\alpha}([0,T];X^{s'-1})\}$. By the uniform estimates \eqref{3.5}, (\ref{3.7}) and the Chebyshev inequality again, we have,
\begin{eqnarray}\label{3.13}
&&\mu_{u}^{n}((B_{K}^{2})^{c})\leq \mathbb{P}\left({\left\|u_{n}-\int_{0}^{t}P_{n}P f(u_{n},\theta_{n})d\mathcal{W}\right\|_{W^{1,2}(0,T;X^{s'-1})}}> \frac{K}{3}\right)\nonumber\\
&&\qquad\qquad\qquad+\mathbb{P}\left(\left\|\int_{0}^{t}P_{n}P f(u_{n},\theta_{n})d\mathcal{W}\right\|_{C^{\alpha}([0,T];X^{s'-1})}> \frac{K}{3}\right)\nonumber\\
&&\qquad\qquad\qquad+\mathbb{P}\left(\left\|u_n\right\|_{C([0,T];X^{s'})}> \frac{K}{3}\right)\nonumber\\
&&\qquad\qquad\quad\leq \frac{C}{K}\mathbb{E}\left({\left\|u_{n}-\int_{0}^{t}P_{n}P f(u_{n},\theta_{n})d\mathcal{W}\right\|_{W^{1,2}(0,T;X^{s'-1})}}\right.\nonumber\\
&&\qquad\qquad\qquad\left.+\left\|\int_{0}^{t}P_{n}P f(u_{n},\theta_{n})d\mathcal{W}\right\|_{C^{\alpha}([0,T];X^{s'-1})}+\|u_n\|_{C([0,T];X^{s'})}\right)\nonumber\\
&&\qquad\qquad\quad\leq \frac{C}{K},
\end{eqnarray}
where the constant $C$ is independent of $n$. We have that $B_{K}^{1}\cap B_{K}^{2}$ is compact in $\mathcal{C}([0,T];X^{s'-1})\cap L^{2}(0,T;X^{s'})$ for any fixed $K>0$. Using \eqref{3.12} and \eqref{3.13}, we obtain,
\begin{equation*}
\mu_{u}^{n}\left((B_{K}^{1}\cap B_{K}^{2})^{c}\right)\leq \mu_{u}^{n}\left((B_{K}^{1})^{c}\right)+\mu_{u}^{n}\left((B_{K}^{2})^{c}\right)\leq \frac{C}{K}.
\end{equation*}
By a similar argument,  the sequence $\{\mu_{\theta}^{n}\}_{n\geq1}$ is tight in $\mathcal{C}([0,T];H^{s'-1})$.
Finally, we obtain that the sequence $\{\mu^{n}\}_{n\geq 1}$ is tight in $\mathcal{X}$.
\end{proof}

Then, from the tightness property and the classical Skorokhod representation theorem\cite[Theorem 1]{Sko}, we have the following proposition,
\begin{proposition}\label{pro3.1} There exist a subsequence $\{\mu^{n_{k}}\}_{k\geq 1}$, a probability space $(\tilde{\Omega},\tilde{\mathcal{F}},\tilde{\mathbb{P}})$ with $\mathcal{X}$-valued measurable random variables $(\tilde{u}_{n_{k}},\tilde{\theta}_{n_{k}},\tilde{\mathcal{W}}_{n_{k}})$ and $(\tilde{u},\tilde{\theta},\tilde{\mathcal{W}})$ such that\\
(i) $(\tilde{u}_{n_{k}},\tilde{\theta}_{n_{k}},\tilde{\mathcal{W}}_{n_{k}})\rightarrow(\tilde{u},\tilde{\theta},\tilde{\mathcal{W}})$, $\tilde{\mathbb{P}}$ \mbox{a.s.} in the topology of $\mathcal{X}$,\\
(ii) the laws of $(\tilde{u}_{n_{k}},\tilde{\theta}_{n_{k}},\tilde{\mathcal{W}}_{n_{k}})$ and $(\tilde{u},\tilde{\theta},\tilde{\mathcal{W}})$ are given by $\{\mu^{n_{k}}\}_{k\geq 1}$ and $\mu$, respectively,\\
(iii)  $(\tilde{\mathcal{\mathcal{W}}}_{n_{k}})$ is a Wiener process, relative to the filtration $\tilde{\mathcal{F}}_{t}^{n_{k}}$, given by the completion of $\sigma(\tilde{u}_{n_{k}},\tilde{\theta}_{n_{k}},\tilde{\mathcal{W}}_{n_{k}})$,\\
(iv) each pair $(\tilde{u}_{n_{k}},\tilde{\theta}_{n_{k}},\tilde{\mathcal{W}}_{n_{k}})$ satisfies
\begin{eqnarray}
\left\{\begin{array}{ll}
d\tilde{u}_{n_{k}}+A\tilde{u}_{n_{k}}dt+\varphi_{R}(\|\nabla \tilde{u}_{n_{k}}\|_{L^{\infty}})P_{n_{k}}P(\tilde{u}_{n_{k}}\cdot\nabla)\tilde{u}_{n_{k}}dt\\ \qquad\qquad\qquad\qquad=P_{n_{k}}P\tilde{\theta}_{n_{k}} e_{d}dt+P_{n_{k}}Pf(\tilde{u}_{n_{k}},\tilde{\theta}_{n_{k}})d\tilde{\mathcal{W}}_{n_{k}},\\
d\tilde{\theta}_{n_{k}}+\varphi_{R}(\|\nabla \tilde{u}_{n_{k}}\|_{L^{\infty}})(\tilde{u}_{n_{k}}\cdot\nabla)\tilde{\theta}_{n_{k}}dt=0.
\end{array}\right.
\end{eqnarray}
\end{proposition}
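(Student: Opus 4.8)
The plan is to combine Prokhorov's theorem, the Skorokhod representation theorem, and a martingale-characterization argument. First, since $\mathcal{X}=\mathcal{X}_{u}\times\mathcal{X}_{\theta}\times\mathcal{X}_{\mathcal{W}}$ is a finite product of separable, completely metrizable spaces, it is a Polish space; hence the tightness established in Lemma \ref{lem3.2} together with Prokhorov's theorem yields a subsequence $\{\mu^{n_{k}}\}$ converging weakly, $\mu^{n_{k}}\rightharpoonup\mu$, to some probability measure $\mu$ on $\mathcal{X}$. Applying the classical Skorokhod representation theorem on this Polish space produces the new probability space $(\tilde{\Omega},\tilde{\mathcal{F}},\tilde{\mathbb{P}})$ carrying random variables $(\tilde{u}_{n_{k}},\tilde{\theta}_{n_{k}},\tilde{\mathcal{W}}_{n_{k}})$ and $(\tilde{u},\tilde{\theta},\tilde{\mathcal{W}})$ whose laws coincide with $\mu^{n_{k}}$ and $\mu$, respectively, and such that $(\tilde{u}_{n_{k}},\tilde{\theta}_{n_{k}},\tilde{\mathcal{W}}_{n_{k}})\to(\tilde{u},\tilde{\theta},\tilde{\mathcal{W}})$ $\tilde{\mathbb{P}}$-a.s. in $\mathcal{X}$. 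This is precisely assertions (i) and (ii).

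For (iii), since the law of $\tilde{\mathcal{W}}_{n_{k}}$ equals $\mu_{\mathcal{W}}$, the law of the original $Q$-Wiener process $\mathcal{W}$, the process $\tilde{\mathcal{W}}_{n_{k}}$ is itself a $Q$-Wiener process admitting the same expansion in the basis $\{e_{k}\}$. To see that it is Wiener relative to $\tilde{\mathcal{F}}_{t}^{n_{k}}=\sigma(\tilde{u}_{n_{k}},\tilde{\theta}_{n_{k}},\tilde{\mathcal{W}}_{n_{k}})$, I would verify that each increment $\tilde{\mathcal{W}}_{n_{k}}(t)-\tilde{\mathcal{W}}_{n_{k}}(s)$ is independent of $\tilde{\mathcal{F}}_{s}^{n_{k}}$. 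On the original space $\mathcal{W}(t)-\mathcal{W}(s)$ enjoys this independence with respect to the filtration generated by $(u_{n_{k}},\theta_{n_{k}},\mathcal{W})$ up to time $s$, because $u_{n_{k}},\theta_{n_{k}}$ are adapted; this property can be recast as the vanishing of expectations of products of bounded continuous functionals of the paths, and such expectations are preserved under equality of joint laws.

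The crux is assertion (iv), transferring the Galerkin system to the new space, and the essential obstacle is that the stochastic integral $\int_{0}^{t}P_{n_{k}}Pf(u_{n_{k}},\theta_{n_{k}})\,d\mathcal{W}$ is not a continuous functional of the driving path, so it cannot be carried over by the almost sure convergence alone. To handle this I would argue by martingale characterization. Define
\[
\tilde{M}_{n_{k}}(t):=\tilde{u}_{n_{k}}(t)-\tilde{u}_{n_{k}}(0)+\int_{0}^{t}\big[A\tilde{u}_{n_{k}}+\varphi_{R}(\|\nabla\tilde{u}_{n_{k}}\|_{L^{\infty}})P_{n_{k}}P(\tilde{u}_{n_{k}}\cdot\nabla)\tilde{u}_{n_{k}}-P_{n_{k}}P\tilde{\theta}_{n_{k}}e_{d}\big]\,ds,
\]
and let $M_{n_{k}}$ be the same expression built from the original variables. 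On the original space $M_{n_{k}}(t)=\int_{0}^{t}P_{n_{k}}Pf(u_{n_{k}},\theta_{n_{k}})\,d\mathcal{W}$ is a continuous square-integrable $\mathcal{F}_{t}^{n_{k}}$-martingale with quadratic variation $\int_{0}^{t}\|P_{n_{k}}Pf(u_{n_{k}},\theta_{n_{k}})\|_{L_{2}(H;X^{s'-1})}^{2}\,ds$ and with cross variation against $\mathcal{W}$ determined by the integrand $P_{n_{k}}Pf(u_{n_{k}},\theta_{n_{k}})$. Each of these martingale and bracket identities can be written as an expectation of a bounded continuous functional of the paths tested against bounded continuous functions measurable up to time $s$, so by equality of laws they transfer verbatim to the tilde-variables. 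A standard representation lemma then identifies $\tilde{M}_{n_{k}}(t)=\int_{0}^{t}P_{n_{k}}Pf(\tilde{u}_{n_{k}},\tilde{\theta}_{n_{k}})\,d\tilde{\mathcal{W}}_{n_{k}}$, which is exactly the momentum equation for the tilde-variables. Finally, the temperature equation contains no stochastic integral and is a continuous pathwise relation, so it transfers directly from the equality of laws, completing (iv). I expect this martingale-characterization step, rather than the soft Prokhorov--Skorokhod part, to be the main difficulty.
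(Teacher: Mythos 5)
Your proposal is correct and takes essentially the same approach as the paper: the paper obtains (i)--(iii) immediately from the Skorokhod representation theorem and defers (iv) to the argument of the cited references \cite{Breit,DWang}, which is precisely the martingale-characterization and law-transfer argument you spell out (identify $M_{n_k}$ as a square-integrable martingale, transfer the martingale and bracket identities through equality of laws, then invoke the standard representation lemma). The only point left implicit is that these identities involve unbounded functionals of the paths, so carrying them to the new probability space requires the uniform moment bounds of Lemma \ref{lem3.1} to justify the needed integrability; this is standard and does not affect the argument.
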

\begin{proof} The  three parts (i)-(iii) follow  immediately the Skorokhod representation theorem, and (iv) may be obtained using the same argument as in papers \cite{Breit,DWang}.
\end{proof}

We next show the existence of martingale solution. Before that, We improve the regularity in the space variable of the solution. By (\ref{3.11}) for $\tilde{u}_{n_{k}}$ in the case $r=2$, there exist  $\tilde{u}_{1}\in L^{2}(\tilde{\Omega};L^\infty(0,T;X^{s'}))$ and $\tilde{u}_{2}\in L^{2}(\tilde{\Omega};L^{2}(0,T;X^{s'+1}))$ such that
\begin{equation}\label{3.15}
\tilde{u}_{n_{k}}\rightharpoonup^{\ast} \tilde{u}_{1}~~~ {\rm in} ~L^{2}(\tilde{\Omega};L^\infty(0,T;X^{s'})),
\end{equation}
and
\begin{equation}\label{3.16}
\tilde{u}_{n_{k}}\rightharpoonup\tilde{u}_{2} ~~~{\rm in}~L^{2}(\tilde{\Omega};L^{2}(0,T;X^{s'+1})).
\end{equation}
On the other hand, combining the inequality
\begin{equation*}
\sup_{k}\tilde{\mathbb{E}}\left(\sup_{t\in [0,T]}\|\tilde{u}_{n_{k}}\|_{s'-1}^{r}\right)\leq c\sup_{k}\tilde{\mathbb{E}}\left(\sup_{t\in [0,T]}\|\tilde{u}_{n_{k}}\|_{s'}^{r}\right)<\infty,
\end{equation*}
with Proposition \ref{pro3.1} (i), we have the following result using the Vitali convergence theorem,
\begin{equation}\label{3.17}
\tilde{u}_{n_{k}}\rightarrow \tilde{u}~~~ {\rm in} ~L^{2}(\tilde{\Omega};L^\infty(0,T;X^{s'-1})).
\end{equation}
 For any set $A \subset [0,T]\times \Omega$ measurable and $\phi \in H^{s'}$, we have, from  (\ref{3.15})-(\ref{3.17}),
\begin{equation*}
\tilde{\mathbb{E}}\int_{0}^{T}1_{A}\langle \tilde{u}, \phi\rangle dt=\tilde{\mathbb{E}}\int_{0}^{T}1_{A}\langle \tilde{u}_{1}, \phi\rangle dt=\tilde{\mathbb{E}}\int_{0}^{T}1_{A}\langle \tilde{u}_{2}, \phi\rangle dt,
\end{equation*}
which implies that $\tilde{u}=\tilde{u}_{1}=\tilde{u}_{2}$, a.e. Therefore, we obtain,
\begin{equation*}
\tilde{u}\in L^{2}(\tilde{\Omega};L^\infty(0,T;X^{s'}))\cap L^{2}(\tilde{\Omega};L^{2}(0,T;X^{s'+1})).
\end{equation*}
By a similar argument, we may infer that $\tilde{\theta}\in L^{2}(\tilde{\Omega};L^\infty(0,T;H^{s'}))$.

With these properties established, we can pass the limit by the argument as in \cite{Debussche,Breit}, where the analysis was implemented for the compressible Navier-Stokes equations, primitive equations, respectively. Since the identification of the limit for the case of Boussinesq equations can be proved in the same manner, we omit it. From the estimates and the equation itself, we are able to deduce that the solution is continuous with respect to time $t$ using the \cite[Theorem 3.1]{KR}, see also \cite{Breit} for compressible Navier-Stokes equations.

Up to now, we have established the following proposition,
\begin{proposition}\label{pro3.2} Fix any integer $s>\frac{d}{2}+2$. Suppose that $f$ satisfies conditions (\ref{2.7})-(\ref{2.9}). Then, there exists a martingale solution $\{(\Omega, \mathcal{F}, \{\mathcal{F}_{t}\}_{t\geq 0}, \mathbb{P}),u,\theta,\mathcal{W}\}$ to system (\ref{3.1}) in the sense of definition \ref{adef2.1}.
\end{proposition}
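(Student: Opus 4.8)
The plan is to prove Proposition \ref{pro3.2} by passing to the limit in the tilde-system of Proposition \ref{pro3.1}(iv), verifying that the limit $(\tilde u,\tilde\theta,\tilde{\mathcal W})$ together with the new stochastic basis $(\tilde\Omega,\tilde{\mathcal F},\{\tilde{\mathcal F}_t\},\tilde{\mathbb P})$ satisfies every requirement of Definition \ref{adef2.1}. The stochastic basis and the Wiener process are supplied directly by Proposition \ref{pro3.1}(i)--(iii), so condition (i) is automatic. The regularity required in (ii) has already been secured above: I have shown $\tilde u\in L^{2}(\tilde\Omega;\mathcal C([0,T];X^{s'}))\cap L^{2}(\tilde\Omega;L^{2}(0,T;X^{s'+1}))$ and $\tilde\theta\in L^{2}(\tilde\Omega;\mathcal C([0,T];H^{s'}))$, and one upgrades the integrability from $r=2$ to general $r$ by invoking the uniform bound \eqref{3.11} and lower semicontinuity of the norms under weak convergence, together with the almost sure convergence in Proposition \ref{pro3.1}(i). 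The initial-law condition (iii), $\Gamma=\tilde{\mathbb P}\circ(\tilde u_0,\tilde\theta_0)^{-1}$, follows because the laws are preserved under the Skorokhod construction (Proposition \ref{pro3.1}(ii)) and the time evaluation at $t=0$ is continuous on the path space $\mathcal X$.

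The substance of the proof is condition (iv): showing that the limit satisfies the integral form of the equations $\tilde{\mathbb P}$-a.s. First I would fix $\phi\in\mathcal C^{\infty}(\mathbb T^{3})$ and, for each $k$, test the tilde-system against $\phi$, writing each equation as an identity between real-valued continuous-in-time processes; the goal is to pass $k\to\infty$ term by term. The linear terms $A\tilde u_{n_k}$ and $P_{n_k}P\tilde\theta_{n_k}e_d$ converge by the strong convergence \eqref{3.17} and the weak convergence \eqref{3.16} paired against the smooth test function. For the nonlinear transport terms I would combine the strong convergence $\tilde u_{n_k}\to\tilde u$ in $L^{2}(\tilde\Omega;\mathcal C([0,T];X^{s'-1}))$ with the uniform higher-order bound \eqref{3.11}: the embedding $X^{s'-1}\hookrightarrow W^{1,\infty}$ noted before Lemma \ref{lem3.2} is exactly what lets me pass the limit inside the cut-off factor $\varphi_R(\|\nabla\tilde u_{n_k}\|_{L^{\infty}})$, since $\varphi_R$ is continuous and $\|\nabla\tilde u_{n_k}\|_{L^{\infty}}\to\|\nabla\tilde u\|_{L^{\infty}}$ $\tilde{\mathbb P}$-a.s. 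The stochastic integral requires the most care: I would show $\int_0^{\cdot}P_{n_k}Pf(\tilde u_{n_k},\tilde\theta_{n_k})\,d\tilde{\mathcal W}_{n_k}\to\int_0^{\cdot}Pf(\tilde u,\tilde\theta)\,d\tilde{\mathcal W}$ using the Lipschitz and growth hypotheses \eqref{2.7}--\eqref{2.9} on $f$ together with the convergence of the pairs $(\tilde u_{n_k},\tilde{\mathcal W}_{n_k})$, via the standard martingale-identification lemma (as in \cite{Debussche,Breit}).

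I expect the convergence of the stochastic integral term to be the main obstacle, because the integrator $\tilde{\mathcal W}_{n_k}$ itself changes with $k$, so one cannot pass the limit by an elementary deterministic argument. The rigorous route is to verify that $\tilde u - \int_0^{\cdot}Pf(\tilde u,\tilde\theta)\,d\tilde{\mathcal W}$ has the prescribed bounded-variation drift by checking that the appropriate quadratic-variation and cross-variation functionals match in the limit; concretely, one shows the candidate martingale $\tilde M(t)=\tilde u(t)-\tilde u_0+\int_0^t(A\tilde u+\varphi_R P(\tilde u\cdot\nabla)\tilde u-P\tilde\theta e_d)\,ds$ has quadratic variation $\int_0^t\|Pf(\tilde u,\tilde\theta)\|_{L_2}^2\,ds$ and the correct joint variation with $\tilde{\mathcal W}$, from which the representation as a stochastic integral against $\tilde{\mathcal W}$ follows. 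Since this identification argument is by now routine for quasilinear SPDEs of this type and is carried out in detail for the closely related primitive and compressible Navier--Stokes systems in \cite{Debussche,Breit}, and since the Boussinesq nonlinearity is handled identically once the cut-off controls the transport terms, I would state that the limit identification goes through verbatim and omit the repetitive computation, exactly as indicated before the statement of the proposition.
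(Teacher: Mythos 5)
Your proposal takes essentially the same route as the paper: uniform estimates (Lemma \ref{lem3.1}), tightness (Lemma \ref{lem3.2}), the Skorokhod representation (Proposition \ref{pro3.1}), the regularity upgrade via \eqref{3.15}--\eqref{3.17}, passage to the limit in the cut-off term through the embedding $H^{s'-1}\hookrightarrow W^{1,\infty}$, and identification of the stochastic integral by the martingale/quadratic-variation argument of \cite{Debussche,Breit}. The paper itself disposes of the final limit identification by citing exactly those references, so your somewhat more explicit sketch of that step is consistent with, and no weaker than, the paper's own proof.
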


\subsection{
Existence of pathwise solution in $H^{s'}$ }

Following the Yamada-Watanabe type argument, we next establish the pathwise uniqueness and then use the Gy\"{o}ngy-Krylov lemma to recover the convergence a.s. of the approximate solutions on the original probability space.
\begin{proposition}\label{pro3.3}{\rm (Uniqueness)} Fix any $s'=s+1$, $s>\frac{d}{2}+1$. Suppose that $f$ satisfies condition (\ref{2.7}), and  $((\mathcal{S},u_{1},\theta_{1})$, $(\mathcal{S},u_{2},\theta_{2}))$ are two martingale solutions of system (\ref{3.1}) with the same stochastic basis $\mathcal{S}:=(\Omega,\mathcal{F},\{\mathcal{F}_{t}\}_{t\geq 0},\mathbb{P}, \mathcal{W})$. Then if $\mathbb{P}\{(u_{1}(0),\theta_{1}(0))=(u_{2}(0),\theta_{2}(0))\}=1$, then pathwise uniqueness of solutions holds in the sense of Definition 2.1.
\end{proposition}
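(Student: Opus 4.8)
The plan is to prove pathwise uniqueness by deriving an energy estimate for the difference of two solutions and closing it via a stochastic Gronwall argument with the aid of a stopping time. Set $v = u_1 - u_2$ and $\eta = \theta_1 - \theta_2$. Since both solutions share the same stochastic basis and initial data, subtracting the two copies of \eqref{3.1} gives an equation for $(v,\eta)$ driven by the difference of the nonlinear and cut-off terms, with stochastic forcing $P(f(u_1,\theta_1)-f(u_2,\theta_2))\,d\mathcal{W}$. Because uniqueness is stated for the higher regularity level $s'=s+1$ (so that $v,\eta$ live in $X^{s'}\times H^{s'}$ and enjoy the embedding $H^{s'-1}\hookrightarrow W^{1,\infty}$), I would measure the difference in the weaker norm $X^{s'-1}\times H^{s'-1}$, i.e.\ one derivative below the solution regularity, which is the natural choice for transport-type estimates and avoids a derivative loss in $\eta$.

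First I would apply the It\^{o} formula to $\|v\|_{s'-1}^2 + \|\eta\|_{s'-1}^2$ on the time interval $[0,t\wedge\tau_K]$, where $\tau_K$ is the stopping time at which $\sup_{s\le t}\|u_i,\theta_i\|_{s'}$ first exceeds $K$; the cut-off $\varphi_R$ and this stopping time together guarantee all quantities are finite. The drift contributions split into three groups: the viscous dissipation $-\|v\|_{s'}^2$ from the Stokes term (which is favorable and can absorb borderline terms); the nonlinear differences coming from $\varphi_R(\|\nabla u_1\|_{L^\infty})P(u_1\cdot\nabla)u_1 - \varphi_R(\|\nabla u_2\|_{L^\infty})P(u_2\cdot\nabla)u_2$ and its temperature analogue; and the coupling term $P\eta e_d$. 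Each nonlinear difference I would further decompose into a piece where the velocity difference $v$ appears and a piece where the cut-off arguments differ, writing $\varphi_R(\|\nabla u_1\|_{L^\infty})-\varphi_R(\|\nabla u_2\|_{L^\infty})$ and using that $\varphi_R$ is Lipschitz so this is bounded by $C\|\nabla v\|_{L^\infty}\le C\|v\|_{s'-1}$ via the embedding. The Moser and commutator estimates of Lemma \ref{lem2.2}, together with the uniform-in-$K$ bounds on $\|u_i,\theta_i\|_{s'}$ and the $L^\infty$ bound \eqref{3.2} on $\|\nabla\theta_i\|_{L^\infty}$, let me control everything by $C(K)\big(\|v\|_{s'-1}^2+\|\eta\|_{s'-1}^2\big)$ plus a small multiple of the dissipation $\|v\|_{s'}^2$ that the viscous term absorbs. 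The stochastic term vanishes in expectation, and the quadratic variation is handled by the Lipschitz condition in \eqref{2.7}, giving a contribution bounded by $C\|v\|_{s'-1}^2$.

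The hard part will be the temperature estimate: since the transport equation for $\eta$ has no diffusion to absorb a derivative, the term arising from $\big(\partial^\alpha(u_1\cdot\nabla)\eta,\partial^\alpha\eta\big)$ at the top order $|\alpha|=s'-1$ must be handled purely by the commutator structure, and the cross term $\big((v\cdot\nabla)\theta_2,\eta\big)_{s'-1}$ forces me to place $\theta_2$ at regularity $s'$ — exactly one derivative above the $\eta$-norm — which is why the proof must run at level $s'-1$ with solutions of regularity $s'$. This is the genuine obstruction that the coupled, partially-dissipative structure imposes, and it is the reason uniqueness is first established at the higher index $s'=s+1$. Having bounded the full right-hand side by $C(K)\big(\|v\|_{s'-1}^2+\|\eta\|_{s'-1}^2\big)$, I would apply Gronwall's inequality to conclude that $\mathbb{E}\big(\|v\|_{s'-1}^2+\|\eta\|_{s'-1}^2\big)(t\wedge\tau_K)=0$ for all $t$, hence $(v,\eta)\equiv 0$ on $[0,t\wedge\tau_K]$. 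Finally, letting $K\to\infty$ (so $\tau_K\to T$) and using the continuity of the trajectories in $X^{s'-1}\times H^{s'-1}$ upgrades this to $\mathbb{P}\big\{(u_1,\theta_1)=(u_2,\theta_2)\ \text{on}\ [0,\tau_1\wedge\tau_2]\big\}=1$, which is pathwise uniqueness in the sense of Definition \ref{def2.1}.
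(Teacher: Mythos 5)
Your proposal is correct and follows essentially the same route as the paper: measuring the difference $(v,\eta)$ in $H^{s}=H^{s'-1}$, stopping when the $H^{s'}$-norms of the solutions exceed $K$, handling the cut-off mismatch via the Lipschitz property of $\varphi_R$ and the embedding $H^{s'-1}\hookrightarrow W^{1,\infty}$, controlling the nonlinear and transport terms with Lemma \ref{lem2.2}, and closing with Gronwall before letting $K\to\infty$. The only cosmetic differences are your symmetric choice of decomposition (placing $(v\cdot\nabla)\theta_2$ rather than $(v\cdot\nabla)\theta_1$) and your use of plain expectations for the martingale term where the paper takes a supremum and invokes Burkholder--Davis--Gundy; both are valid.
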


\begin{proof}
 The difference of two solutions, $v=u_{1}-u_{2}$ and $\eta=\theta_{1}-\theta_{2}$, satisfy,
\begin{eqnarray*}
\left\{\begin{array}{ll}
dv+Avdt+\varphi_{R}(\|\nabla u_{1}\|_{L^{\infty}})P(u_{1}\cdot\nabla)u_{1}dt-\varphi_{R}(\|\nabla u_{2}\|_{L^{\infty}})P(u_{2}\cdot\nabla)u_{2}dt\\ \qquad\qquad\qquad\qquad\qquad\qquad\qquad=P\eta e_{d}dt+P(f(u_{1},\theta_{1})-f(u_{2},\theta_{2}))d\mathcal{W},\\
d\eta+\varphi_{R}(\|\nabla u_{1}\|_{L^{\infty}})(u_{1}\cdot\nabla) \theta_{1} dt-\varphi_{R}(\|\nabla u_{2}\|_{L^{\infty}})(u_{2}\cdot\nabla)\theta_{2}dt=0.
\end{array}\right.
\end{eqnarray*}
Applying the operator $\partial^{\alpha}, |\alpha|\leq s$ to both sides of the system for $v$ and $\eta$ and then applying the It\^{o} formula to $\|\partial^{\alpha}v\|_{L^{2}}^{2}$ give,
\begin{eqnarray*}
&&\quad d\|\partial^{\alpha}v\|_{L^{2}}^{2}+2\|\partial^{\alpha+1}v\|_{L^{2}}^{2}dt\\
&&=-2[\varphi_{R}(\|\nabla u_{1}\|_{L^{\infty}})(\partial^{\alpha}v, \partial^{\alpha}P(u_{1}\cdot\nabla) u_{1})-\varphi_{R}(\|\nabla u_{2}\|_{L^{\infty}})(\partial^{\alpha}v, \partial^{\alpha}P(u_{2}\cdot\nabla) u_{2})]dt\\
&&\quad+2(\partial^{\alpha}v, \partial^{\alpha}P\eta e_{d}) dt+\left|\partial^{\alpha}P(f(u_{1},\theta_{1})-f(u_{2},\theta_{2}))\right|^{2}dt\\
&&\quad+2(\partial^{\alpha}v, \partial^{\alpha}P(f(u_{1},\theta_{1})-f(u_{2},\theta_{2}))) d\mathcal{W}\\
&&=(J_{1}+J_{2}+J_{3})dt+J_{4}d\mathcal{W},
\end{eqnarray*}
and
\begin{equation*}
d\|\partial^{\alpha}\eta\|_{L^{2}}^{2}=-2(\partial^{\alpha}\eta, \varphi_{R}(\|\nabla u_{1}\|_{L^{\infty}})\partial^{\alpha}(u_{1}\cdot\nabla) \theta_{1}-\varphi_{R}(\|\nabla u_{2}\|_{L^{\infty}})\partial^{\alpha}(u_{2}\cdot\nabla) \theta_{2}) dt=I_{1}.
\end{equation*}
Using the mean value theorem for $\varphi_{R}$, the embedding $H^{s}\subset W^{1,\infty}$ and Lemma \ref{lem2.2} yield
\begin{align}\label{3.18}
|J_{1}|&\leq C\sum_{|\alpha|\leq s} |\varphi_{R}(\|\nabla u_{1}\|_{L^{\infty}})-\varphi_{R}(\| \nabla u_{2}\|_{L^{\infty}})|\cdot |(\partial^{\alpha}v, \partial^{\alpha}P(u_{1}\cdot\nabla) u_{1})|\nonumber\\
&\quad+C \sum_{|\alpha|\leq s} (|( \partial^{\alpha}v,\partial^{\alpha}P(v\cdot \nabla) u_{1})|+|( \partial^{\alpha}v,\partial^{\alpha}P(u_{2}\cdot \nabla) v)|)\nonumber\\
&\leq C\left|\|\nabla u_{1}\|_{L^{\infty}}-\|\nabla u_{2}\|_{L^{\infty}}\right|\cdot\|v\|_{s+1}\|u_{1}\|_{s}\|u_{1}\|_{s-1}\nonumber\\
&\quad+\|v\|_{s+1}\|v\|_{s}\|u_1\|_s+\|v\|_{s+1}\|v\|_{s}\|u_2\|_s\nonumber\\ &\leq \|v\|_{s+1}^2+C\|v\|_{s}^{2}(1+\|u_{1}\|_{s}^4+\|u_{2}\|_{s}^2),
\end{align}
and
\begin{align}\label{3.19}
|I_{1}|&\leq C\sum_{|\alpha|\leq s}  |\varphi_{R}(\|\nabla u_{1}\|_{L^{\infty}})-\varphi_{R}(\|\nabla u_{2}\|_{L^{\infty}})|\cdot|(\partial^{\alpha}\eta,\partial^{\alpha}(u_{1}\cdot\nabla)\theta_{1})|\nonumber\\
&\quad+C\sum_{|\alpha|\leq s} (|(\partial^{\alpha}(v\cdot\nabla) \theta_{1},\partial^{\alpha}\eta )|+|(\partial^{\alpha}(u_{2}\cdot\nabla)\eta,\partial^{\alpha}\eta )|)\nonumber\\
&\leq \|v\|_s\|\eta\|_{s}\|u_{1}\|_{s}\|\theta_{1}\|_{s+1}+\|\eta\|_{s}(\|v\|_{L^{\infty}}\|\theta_{1}\|_{s+1}+\|\nabla \theta_{1}\|_{L^{\infty}}\|v\|_{L^{\infty}})\nonumber\\
&\quad+ \|\eta\|_{s}(\|\nabla u_{2}\|_{L^{\infty}}\|\eta\|_{s}+\|\nabla \eta\|_{L^{\infty}}\|u_{2}\|_{s})\nonumber\\
&\leq \|\eta\|_{s}^{2}(\|u_{1}\|_{s}\|\theta_{1}\|_{s+1}+\|u_{2}\|_{s})+\|\eta, v\|_{s}^{2}(\|\theta_{1}\|_{s+1}+\|\theta_{1}\|_{s}).
\end{align}
For $J_{2}$ and $J_{3}$, applications of the H\"{o}lder inequality and condition $(\ref{2.7})$ give,
\begin{equation*}\label{3.20}
|J_{2}|+|J_{3}|\leq C\|\eta, v\|_{s}^{2}.
\end{equation*}
For the term $J_{4}$ we apply the Burkholder-Davis-Gundy inequality similar to $(\ref{3.10})$ to obtain,
\begin{eqnarray}\label{3.21}
&&\mathbb{E}\left(\sup_{\gamma\in [0,t]} \left|\int_{0}^{\gamma\wedge \tau_{K}}J_{4}d\mathcal{W}\right|\right)\leq C\mathbb{E}\left(\int_{0}^{t\wedge \tau_{K}}\left|J_4\right|^{2}d\gamma\right)^{\frac{1}{2}}\nonumber\\
&&~\quad\qquad\qquad\qquad\qquad\qquad\leq C\mathbb{E}\left(\int_{0}^{t\wedge \tau_{K}}\|\partial^{\alpha}v\|_{L^{2}}^{2}\|f(u_{1},\theta_{1})-f(u_{2},\theta_{2})\|_{L_{2}
(H;H^{s})}^{2}d\gamma\right)^{\frac{1}{2}}\nonumber\\
&&~\quad\qquad\qquad\qquad\qquad\qquad\leq \frac{1}{2}\mathbb{E}\left(\sup_{\gamma\in[0,t\wedge \tau_{K}]}\|\partial^{\alpha}v\|_{L^{2}}^{2}\right)+C\mathbb{E}\int_{0}^{t\wedge \tau_{K}}\|v, \eta\|_{s}^{2}d\gamma,
\end{eqnarray}
where the collection of stopping times can be defined as
\begin{equation*}
\tau_{K}:=\inf\left\{t\geq 0: \sup_{\gamma\in [0,t]}(\|u_{1},u_{2}\|_{s}^{2}+\|\theta_{1}\|_{s+1}^{2})\geq K\right\}.
\end{equation*}
We have $\tau_{K}\rightarrow \infty$ a.s. as $K\rightarrow \infty$ due to a priori estimate $(\ref{3.11})$ and the assumption on $s'$.
Combining estimates $(\ref{3.18})$-$(\ref{3.21})$ and summing over all $\alpha$ with $|\alpha| \leq s$, we have,
\begin{eqnarray*}
&&\quad \mathbb{E}\left(\sup_{\gamma\in [0,t\wedge \tau_{K}]}\|v, \eta\|_{s}^{2}\right)+\mathbb{E}\int_{0}^{t\wedge \tau_{K}}\|v\|_{X^{s+1}}^{2}d\gamma\\
&&\leq \mathbb{E}\int_{0}^{t\wedge \tau_{K}}\|\eta, v\|_{s}^{2}(1+\|\theta_{1}\|_{s+1}^2+\|u_{1}\|_{s}^4+\|u_{2}\|_{s}^2)d\gamma\\
&&\leq C \int_{0}^{t}\mathbb{E}\left(\sup_{r\in[0,\gamma\wedge \tau_{K}]}\|v, \eta\|_{s}^{2}\right)d\gamma,
\end{eqnarray*}
where constant $C$ depends on $K$ via the definition of the stopping time $\tau_{K}$. By the Gronwall inequality and the monotone convergence theorem, we infer that,
\begin{equation*}
\mathbb{E}\left(\sup_{t\in [0,T]}\|v, \eta\|_{s}^{2}\right)+\mathbb{E}\int_{0}^{T}\|v\|_{s+1}^{2}dt=0,
\end{equation*}
for every $T>0$. The uniqueness follows.
\end{proof}

The following proposition and its proof can be found in \cite{Krylov}.
\begin{proposition}\label{pro3.4}
Let $X$ be a complete separable metric space and suppose that $\{Y_{n}\}_{n\geq0}$ is a sequence of $X$-valued random variables on a probability space $(\Omega,\mathcal{F},\mathbb{P})$. Let $\{\mu_{m,n}\}_{m,n\geq1}$ be the set of joint laws of $\{Y_{n}\}_{n\geq1}$, that is
\begin{equation*}
\mu_{m,n}(E):=\mathbb{P}\{(Y_{n},Y_{m})\in E\},~~~E\in\mathcal{B}(X\times X).
\end{equation*}
Then $\{Y_{n}\}_{n\geq1}$ converges in probability if and only if for every subsequence of the joint probability laws $\{\mu_{m_{k},n_{k}}\}_{k\geq1}$, there exists a further subsequence that converges weakly to a probability measure $\mu$ such that
\begin{equation*}
\mu\{(u,v)\in X\times X: u=v\}=1.
\end{equation*}
\end{proposition}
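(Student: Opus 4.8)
The plan is to reduce both implications to the standard fact that, in a complete metric space, a sequence converges in probability \emph{if and only if} it is Cauchy in probability, meaning $\mathbb{P}(d(Y_n,Y_m)>\varepsilon)\to 0$ as $m,n\to\infty$ for every $\varepsilon>0$. Completeness of $X$ is what guarantees that a Cauchy-in-probability sequence actually has a limit: one extracts a subsequence $Y_{n_j}$ with $\mathbb{P}(d(Y_{n_{j+1}},Y_{n_j})>2^{-j})<2^{-j}$, uses the Borel--Cantelli lemma and completeness to obtain an almost sure (hence in-probability) limit $Y$ of this subsequence, and then the Cauchy property upgrades this to convergence of the whole sequence to $Y$. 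Thus it suffices to characterize the Cauchy-in-probability property in terms of the joint laws $\mu_{m,n}$.

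For the forward direction, I would assume $Y_n\to Y$ in probability. Then for any index sequences with $m_k,n_k\to\infty$ the pairs $(Y_{n_k},Y_{m_k})$ converge in probability to $(Y,Y)$ in $X\times X$, hence converge in distribution, so $\mu_{m_k,n_k}\rightharpoonup \mu:=\mathbb{P}\circ(Y,Y)^{-1}$. Since $(Y,Y)$ is supported on the diagonal, $\mu\{(u,v):u=v\}=1$; the required property then holds along every subsequence with the trivial further extraction.

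For the backward direction I would argue by contradiction. Suppose the stated condition holds but $\{Y_n\}$ is not Cauchy in probability. Negating the definition produces $\varepsilon>0$, $\delta>0$ and index sequences $m_k,n_k\to\infty$ with $\mathbb{P}(d(Y_{n_k},Y_{m_k})>\varepsilon)\geq\delta$ for all $k$. Applying the hypothesis to the subsequence $\{\mu_{m_k,n_k}\}_k$ yields a further subsequence $\{\mu_{m_{k_j},n_{k_j}}\}_j$ converging weakly to a probability measure $\mu$ concentrated on the diagonal $D=\{(u,v):u=v\}$. The set $F=\{(u,v):d(u,v)\geq\varepsilon\}$ is closed and disjoint from $D$, whence $\mu(F)=0$, and the Portmanteau theorem gives $\limsup_j \mu_{m_{k_j},n_{k_j}}(F)\leq \mu(F)=0$. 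On the other hand $\mathbb{P}(d(Y_{n_k},Y_{m_k})>\varepsilon)\leq \mu_{m_k,n_k}(F)$ is bounded below by $\delta>0$ for every $k$, a contradiction. Hence $\{Y_n\}$ is Cauchy in probability and therefore convergent.

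The main obstacle is the backward direction, and the delicate step there is the correct application of the Portmanteau theorem: the event $\{d>\varepsilon\}$ is open, so weak convergence only controls its $\liminf$ from below, and one must instead enclose it in the closed set $\{d\geq\varepsilon\}$ to obtain the needed upper bound $\limsup\le\mu(F)=0$. The only other point requiring care is invoking completeness (and separability) of $X$ precisely at the passage from ``Cauchy in probability'' to a genuine in-probability limit, since it is there that the existence of the limit random variable $Y$ is produced.
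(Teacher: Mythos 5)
Your proof is correct: the reduction to the Cauchy-in-probability property (using completeness and separability to produce the limit via Borel--Cantelli), the forward direction via convergence in distribution of the pairs to the diagonal law, and the backward direction by contradiction using the Portmanteau bound on the closed set $\{(u,v):d(u,v)\geq\varepsilon\}$ are all sound. The paper itself gives no proof — it simply cites Gy\"{o}ngy--Krylov \cite{Krylov} — and your argument is essentially the standard proof found in that reference, so there is nothing to reconcile.
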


We denote by $\mu_{n,m}$ the joint law of
\begin{equation*}
(u_{n},\theta_{n};u_{m},\theta_{m})~~~~ {\rm on~ the ~path ~space}~\mathcal{X}=\mathcal{X}_{u}\times \mathcal{X}_{\theta}\times \mathcal{X}_{u}\times \mathcal{X}_{\theta},
\end{equation*}
where $\{u_{n},\theta_{n};u_{m},\theta_{m}\}_{n,m\geq 1}$ is a sequence of approximation solutions to system \eqref{3.3} relative to the given stochastic basis $\mathcal{S}$, and denote by $\mu_{\mathcal{W}}$ the law of $\mathcal{W}$ on $\mathcal{X}_{\mathcal{W}}$. We introduce the extended phase space,
\begin{equation*}
\tilde{\mathcal{X}}=\mathcal{X}_{u}\times \mathcal{X}_{\theta}\times \mathcal{X}_{u}\times \mathcal{X}_{\theta}\times\mathcal{X}_{\mathcal{W}},
\end{equation*}
and denote by $\nu_{n,m}$ the joint law of $(u_{n},\theta_{n};u_{m},\theta_{m},\mathcal{W})~~~~{\rm on}~~\tilde{\mathcal{X}}$. Using a similar argument as in the proof of Lemma \ref{lem3.2}, we obtain the following result.
\begin{lemma}\label{lem3.3} The set $\{\nu_{n,m}\}_{n,m\geq 1}$ is tight on $\tilde{\mathcal{X}}$.
\end{lemma}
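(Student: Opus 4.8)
The plan is to reduce tightness of the joint laws $\nu_{n,m}$ on the product space $\tilde{\mathcal{X}}=\mathcal{X}_{u}\times \mathcal{X}_{\theta}\times \mathcal{X}_{u}\times \mathcal{X}_{\theta}\times\mathcal{X}_{\mathcal{W}}$ to the tightness of its five marginals, since those marginals are precisely the families already controlled in Lemma \ref{lem3.2}. First I would observe that the marginal of $\nu_{n,m}$ on the first $\mathcal{X}_{u}$-factor is $\mu_{u}^{n}=\mathbb{P}\{u_{n}\in\cdot\}$, on the first $\mathcal{X}_{\theta}$-factor is $\mu_{\theta}^{n}$, on the second pair of factors are $\mu_{u}^{m}$ and $\mu_{\theta}^{m}$, and on the last factor is the single Wiener measure $\mu_{\mathcal{W}}$. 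The crucial point is that all the uniform estimates in Lemma \ref{lem3.1}, namely \eqref{3.5}, \eqref{3.7} and \eqref{3.11}, are uniform in the approximation index, so the identical bounds hold for the $m$-indexed approximations as for the $n$-indexed ones.

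Next I would reuse verbatim the compact-set construction from the proof of Lemma \ref{lem3.2}. For each fixed $K>0$ the set $B_{K}^{1}\cap B_{K}^{2}$ is compact in $\mathcal{X}_{u}$ and, by \eqref{3.5}, \eqref{3.7}, \eqref{3.11} and the Chebyshev inequality, $\mu_{u}^{n}((B_{K}^{1}\cap B_{K}^{2})^{c})\leq C/K$ with $C$ independent of $n$; an analogous compact set $D_{K}\subset\mathcal{X}_{\theta}$ satisfies $\mu_{\theta}^{n}(D_{K}^{c})\leq C/K$ uniformly in $n$. Since $\mu_{\mathcal{W}}$ is a single Borel probability measure on the Polish space $\mathcal{X}_{\mathcal{W}}=\mathcal{C}([0,T];H)$, it is automatically tight, so for any $\delta>0$ there is a compact $E_{\delta}\subset\mathcal{X}_{\mathcal{W}}$ with $\mu_{\mathcal{W}}(E_{\delta}^{c})<\delta$.

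Then I would assemble the candidate compact set in $\tilde{\mathcal{X}}$ as the product
\[
\mathcal{K}=(B_{K}^{1}\cap B_{K}^{2})\times D_{K}\times (B_{K}^{1}\cap B_{K}^{2})\times D_{K}\times E_{\delta},
\]
which is compact by Tychonoff's theorem. Since the complement of a product is covered by the union of the corresponding cylinder complements, I obtain the uniform bound
\begin{align*}
\nu_{n,m}(\mathcal{K}^{c}) &\leq \mu_{u}^{n}((B_{K}^{1}\cap B_{K}^{2})^{c})+\mu_{\theta}^{n}(D_{K}^{c})+\mu_{u}^{m}((B_{K}^{1}\cap B_{K}^{2})^{c})\\
&\quad +\mu_{\theta}^{m}(D_{K}^{c})+\mu_{\mathcal{W}}(E_{\delta}^{c})\leq \frac{4C}{K}+\delta,
\end{align*}
where the bound holds uniformly in $n,m$. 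Given $\varepsilon>0$, choosing $K$ large and $\delta$ small forces the right-hand side below $\varepsilon$, which yields tightness of $\{\nu_{n,m}\}_{n,m\geq1}$ on $\tilde{\mathcal{X}}$.

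The argument carries no real obstacle, and this is why the statement can be settled by ``a similar argument'' as Lemma \ref{lem3.2}. The only point requiring care is that $\nu_{n,m}$ is a genuine joint law rather than a product of its marginals, so tightness must be argued through the elementary observation that $\mathcal{K}^{c}$ is contained in the union of the five cylinder complements; it cannot be asserted coordinatewise. The uniformity in the approximation index of the Lemma \ref{lem3.1} estimates is exactly what lets a single constant $C$ serve simultaneously for the $n$- and $m$-marginals.
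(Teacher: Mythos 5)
Your proposal is correct and is essentially the argument the paper intends: the paper's proof of Lemma \ref{lem3.3} consists of the single remark that it follows ``by a similar argument as in the proof of Lemma \ref{lem3.2}'', and your write-up fills in exactly that argument --- reusing the compact sets $B_{K}^{1}\cap B_{K}^{2}$ (and their analogue for $\theta$) whose construction rests on the estimates \eqref{3.5}, \eqref{3.7}, \eqref{3.11}, which are uniform in the approximation index, and then passing from marginal tightness to tightness of the joint laws via the union bound over cylinder complements of a product compact set. Your explicit caution that $\nu_{n,m}$ is a genuine joint law rather than a product measure, so that the estimate must go through the covering of $\mathcal{K}^{c}$ by the five cylinder complements, is precisely the point the paper leaves implicit.
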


For any subsequence $\{\nu_{n_{k},m_{k}}\}_{k\geq 1}$, by the Skorokhod representation theorem, there exists another probability space $(\tilde{\Omega},\tilde{\mathcal{F}},\tilde{\mathbb{P}})$ and $\tilde{\mathcal{X}}$-valued random variables
\begin{eqnarray*}
(\tilde{u}_{n_{k}},\tilde{\theta}_{n_{k}};\tilde{u}_{m_{k}},\tilde{\theta}_{m_{k}};\tilde{\mathcal{W}}_{k}),~ {\rm and}~(\tilde{u}_{1},\tilde{\theta}_{1};\tilde{u}_{2},\tilde{\theta}_{2};\tilde{\mathcal{W}})
\end{eqnarray*}
such that
\begin{eqnarray*}
 \tilde{\mathbb{P}}\{(\tilde{u}_{n_{k}},\tilde{\theta}_{n_{k}};\tilde{u}_{m_{k}},\tilde{\theta}_{m_{k}};\tilde{\mathcal{W}}_{k})\in \cdot\}=\nu_{n_{k},m_{k}}(\cdot),
\end{eqnarray*}
 and
\begin{eqnarray*}
 (\tilde{u}_{n_{k}},\tilde{\theta}_{n_{k}};\tilde{u}_{m_{k}},\tilde{\theta}_{m_{k}};\tilde{\mathcal{W}}_{k})\rightarrow (\tilde{u}_{1},\tilde{\theta}_{1};\tilde{u}_{2},\tilde{\theta}_{2};\tilde{\mathcal{W}}),~~\tilde{\mathbb{P}} ~a.s.
\end{eqnarray*}
in the topology of $\tilde{\mathcal{X}}$. Analogously, this theorem can be applied to both
\begin{equation*}
(\tilde{u}_{n_{k}},\tilde{\theta}_{n_{k}},\tilde{\mathcal{W}}_{k}),
~~(\tilde{u}_{1},\tilde{\theta}_{1},\tilde{\mathcal{W}}), \hspace{.3cm} \text{and} \hspace{.3cm}
(\tilde{u}_{m_{k}},\tilde{\theta}_{m_{k}},\tilde{\mathcal{W}}_{k}),~~(\tilde{u}_{2},\tilde{\theta}_{2},\tilde{\mathcal{W}})
\end{equation*}
to show that $(\tilde{u}_{1},\tilde{\theta}_{1},\tilde{\mathcal{W}})$ and $(\tilde{u}_{2},\tilde{\theta}_{2},\tilde{\mathcal{W}})$ are martingale solutions relative to the same stochastic basis $\mathcal{S}:=(\tilde{\Omega},\tilde{\mathcal{F}},\tilde{\mathbb{P}},\{\tilde{\mathcal{F}}_{t}\}_{t\geq 0},\tilde{\mathcal{W}})$. Defining $\mu(\cdot)=\tilde{\mathbb{P}}\{(\tilde{u}_{1},\tilde{u}_{2};\tilde{\theta}_{1},\tilde{\theta}_{2})\in \cdot\}$, due to the convergence a.s. in $\mathcal{X}$, we have $\mu_{n,m}\rightharpoonup \mu$. Proposition \ref{pro3.3} implies that $\mu\{(u_{1},\theta_{1};u_{2},\theta_{2})\in \mathcal{X}:(u_{1},\theta_{1})=(u_{2},\theta_{2})\}=1$. Also since we have the uniqueness in $ H^{s}=H^{s'-1}$. Therefore, Proposition \ref{pro3.4} can be used to deduce that the sequence $(u_{n},\theta_{n})$ defined on the original probability space $(\Omega,\mathcal{F},\mathbb{P})$ converges a.s. in the topology of $\mathcal{X}_{u}\times \mathcal{X}_{\theta}$ to random variable $(u,\theta)$.
Again by the method from above, we may show that $(u,\theta)$ is a pathwise solution of (3.1).

Next, define the stopping time,
\begin{equation*}
\tau=\inf \{t\geq 0:\|u\|_{1,\infty}\geq R\}.
\end{equation*}
Hence, relative to the fixed stochastic basis $\mathcal{S}$, $(u,\theta,\tau)$ is a local pathwise solution to the system (\ref{Equ1.1}), for which $u(\cdot \wedge \tau)\in L^{2}(\Omega;\mathcal{C}([0,\infty);X^{s'}))$, $u 1_{t\leq \tau}\in L^{2}(\Omega;L^{2}_{loc}(0,\infty;X^{s'+1})$, $\theta(\cdot\wedge \tau)\in L^{2}(\Omega; \mathcal{C}([0,\infty);H^{s'}$)), and (\ref{2.11}) holds for every $t\geq 0$. In order to show that $\tau >0$ and to loosen the integrability in the random element $w$, the initial data has to be truncated. Actually, using the technique given in \cite{Breit,Glatt}, we can release the restriction on initial data to general case.

\subsection{
Extending to the maximal pathwise solution} The proof is standard, we refer the reader to \cite{Breit,Glatt-Holtz}.

\bigskip

\section{The existence of pathwise solution in $H^{s}$}\label{sec4}
\setcounter{equation}{0}

In this section, we extend the range of the regularity index of space to any integer $s>\frac{d}{2}+1$. Inspired by \cite{Lai,Masmoudi}, we shall adopt a density and stability argument, the pathwise solution evolving in $H^{s'}$ obtained in Section 3 will be used for the approximate solutions. To extract a strongly convergent subsequence and overcome the difficulty of compactness, a pairwise comparison technique introduced in \cite{Glatt,Rozovskii} will be employed.

First, we review some basic properties of a class of smoothing operators $\rho_{\epsilon}$ which was constructed in \cite{Bona} on the whole space, in \cite{Tang} on the torus.
\begin{lemma}\label{lem4.1}
Let $s\geq 0$. For every $\epsilon>0$, the operator $\rho_{\epsilon}$ maps $H^{s}(\mathbb{T}^{d})$ into $H^{s'}(\mathbb{T}^{d})$ where $s'=s+1$ and has the following properties,

(i) The collection $\{\rho_{\epsilon}\}_{\epsilon >0}$ is uniformly bounded in $H^{s}(\mathbb{T}^{d})$ independent of $\epsilon$, i.e. there exists a positive constant C=C(s) such that,
\begin{equation*}
\|\rho_{\epsilon}f\|_{s}\leq C\|f\|_{s},~~~f\in H^{s}(\mathbb{T}^{d}).
\end{equation*}

(ii) For every $\epsilon>0$, if $s\geq 1$  then for $f\in H^{s}(\mathbb{T}^{d})$,
\begin{equation*}
\|\rho_{\epsilon}f\|_{s}\leq \frac{C}{\epsilon}\|f\|_{s-1}, \hspace{.3cm} \text{and} \hspace{.3cm}\|\rho_{\epsilon}f-f\|_{s-1}\leq C\epsilon\|f\|_{s}.
\end{equation*}

(iii) Sequence $\rho_{\epsilon}f$ converges to $f$, for $f\in H^{s}(\mathbb{T}^{d})$, that is,
\begin{equation*}
\lim_{\epsilon\rightarrow 0}\|\rho_{\epsilon}f-f\|_{s}=0, \hspace{.3cm} \text{and} \hspace{.3cm}\lim_{\epsilon\rightarrow 0}\frac{1}{\epsilon}\|\rho_{\epsilon}f-f\|_{s-1}=0.
\end{equation*}
 In particular, if $\{f_{k}\}_{k\geq 1}$ is a sequence of functions in $H^{s}(\mathbb{T}^{d})$ that converges in $H^{s}(\mathbb{T}^{d})$, then for $s\geq 1$,
\begin{equation*}
\lim_{\epsilon\rightarrow 0}\sup_{k\geq 1}\|\rho_{\epsilon}f_{k}-f_{k}\|_{s}=0,\hspace{.3cm} \text{and} \hspace{.3cm}
\lim_{\epsilon\rightarrow 0}\frac{1}{\epsilon}\sup_{k\geq 1}\|\rho_{\epsilon}f_{k}-f_{k}\|_{s-1}=0.
\end{equation*}
\end{lemma}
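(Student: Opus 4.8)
The plan is to pass to the Fourier side and reduce every assertion to an elementary estimate on the symbol of $\rho_\epsilon$. From the construction in \cite{Bona,Tang}, $\rho_\epsilon$ acts as a Fourier multiplier, $\widehat{\rho_\epsilon f}(k)=\psi(\epsilon k)\widehat{f}(k)$ for $k\in\mathbb{Z}^d$, where the symbol $\psi$ is smooth, radial, rapidly decreasing, and satisfies $\psi(0)=1$ together with the bounds $|\psi|\leq C_0$ and $(1+|\xi|)|\psi(\xi)|\leq C_1$; radiality forces $\nabla\psi(0)=0$, hence $\psi(\xi)-1=O(|\xi|^2)$ near the origin. By Parseval, $\|g\|_s^2=\sum_{k\in\mathbb{Z}^d}(1+|k|^2)^s|\widehat{g}(k)|^2$, so each quantity below is read off directly from the weights $(1+|k|^2)^s|\psi(\epsilon k)|^2$ or $(1+|k|^2)^s|\psi(\epsilon k)-1|^2$.

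Part (i) is immediate from $|\psi|\leq C_0$. For the first estimate of (ii) I would substitute $\xi=\epsilon k$ in $(1+|\xi|)|\psi(\xi)|\leq C_1$ to get $(1+|k|^2)^{1/2}|\psi(\epsilon k)|\leq C_1/\epsilon$ for $\epsilon\leq1$, whence $(1+|k|^2)^{s/2}|\psi(\epsilon k)|\leq(C_1/\epsilon)(1+|k|^2)^{(s-1)/2}$, and after squaring and summing against $|\widehat{f}(k)|^2$ this yields $\|\rho_\epsilon f\|_s\leq(C/\epsilon)\|f\|_{s-1}$ (this also shows $\rho_\epsilon$ gains a derivative, mapping $H^s$ into $H^{s'}$). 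For the approximation estimate I would first prove the pointwise bound $|\psi(\xi)-1|\leq C|\xi|$ valid for all $\xi$ — from a first-order Taylor expansion when $|\xi|\leq1$, and from $|\psi-1|\leq C_0+1\leq(C_0+1)|\xi|$ when $|\xi|>1$ — so that $(1+|k|^2)^{s-1}|\psi(\epsilon k)-1|^2\leq C\epsilon^2(1+|k|^2)^s$, and summation gives $\|\rho_\epsilon f-f\|_{s-1}\leq C\epsilon\|f\|_s$.

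For the single-function statements in (iii) I would invoke dominated convergence for series. In the first limit the summand $(1+|k|^2)^s|\psi(\epsilon k)-1|^2|\widehat{f}(k)|^2$ tends to $0$ for each fixed $k$ since $\psi(\epsilon k)\to\psi(0)=1$, and it is dominated by $(C_0+1)^2(1+|k|^2)^s|\widehat{f}(k)|^2\in\ell^1$. In the second limit one writes $\epsilon^{-2}(1+|k|^2)^{s-1}|\psi(\epsilon k)-1|^2$, which tends to $0$ pointwise because $\psi(\xi)-1=O(|\xi|^2)$ forces $\epsilon^{-1}|\psi(\epsilon k)-1|\to0$, and is dominated by the same $\ell^1$ majorant coming from $\epsilon^{-1}|\psi(\epsilon k)-1|\leq C|k|$. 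This is the one place where radiality, that is $\nabla\psi(0)=0$, is genuinely used, and I expect it to be the most delicate point: without it the rescaled difference would converge to the nonzero limit $(\nabla\psi(0)\cdot k)$ and the $\epsilon^{-1}$ statement would fail.

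Finally, for the uniform statements over an $H^s$-convergent sequence $\{f_k\}$ with limit $f_\infty$, I would not redo the Fourier analysis but combine the uniform bounds already obtained with a finite-initial-segment argument. Writing $\rho_\epsilon f_k-f_k=\rho_\epsilon(f_k-f_\infty)-(f_k-f_\infty)+(\rho_\epsilon f_\infty-f_\infty)$ and using (i) gives $\|\rho_\epsilon f_k-f_k\|_s\leq(C_0+1)\|f_k-f_\infty\|_s+\|\rho_\epsilon f_\infty-f_\infty\|_s$; given $\delta>0$, the first term is below $\delta$ for all large $k$, only finitely many indices remain, for which the single-function limit applies, and the last term vanishes as $\epsilon\to0$, so $\limsup_{\epsilon\to0}\sup_k\|\rho_\epsilon f_k-f_k\|_s\leq(C_0+1)\delta$ for every $\delta$. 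The $\epsilon^{-1}$ version is identical, using the estimate $\epsilon^{-1}\|\rho_\epsilon g-g\|_{s-1}\leq C\|g\|_s$ from (ii) in place of the bound from (i).
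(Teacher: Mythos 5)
The paper itself offers no proof of Lemma \ref{lem4.1}: it is stated as a review of properties of the smoothing operators constructed in \cite{Bona} (whole space) and \cite{Tang} (torus), with the reader referred to those works. Your Fourier-multiplier proof is therefore a self-contained substitute rather than a parallel of an argument in the paper, and it is correct. Writing $\widehat{\rho_{\epsilon}f}(k)=\psi(\epsilon k)\widehat{f}(k)$ and reading every norm off Parseval is exactly the right reduction: (i) from $|\psi|\leq C_{0}$; the gain-of-one-derivative bound in (ii) from $(1+|\xi|)|\psi(\xi)|\leq C_{1}$ together with $(1+|k|)/(1+\epsilon|k|)\leq \epsilon^{-1}$; the approximation bound from the global estimate $|\psi(\xi)-1|\leq C|\xi|$; the two limits in (iii) by dominated convergence for series; and the uniform statements over a convergent sequence by the three-term splitting through the limit $f_{\infty}$ plus a finite-initial-segment argument. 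You also correctly isolate the one genuinely structural hypothesis, $\nabla\psi(0)=0$ (evenness/radiality), without which $\epsilon^{-1}\|\rho_{\epsilon}f-f\|_{s-1}\to 0$ fails; this second-order vanishing of $\psi-1$ at the origin is what the Bona--Smith-type construction is designed to provide, so your assumed symbol properties do match the cited construction.

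Two minor caveats, neither fatal. First, the first inequality of (ii) cannot hold for all $\epsilon>0$ with a fixed constant: testing at $k=0$ gives $\|\rho_{\epsilon}f\|_{s}\geq|\widehat{f}(0)|$ while $C\epsilon^{-1}\|f\|_{s-1}\to 0$ as $\epsilon\to\infty$, so the estimate is necessarily restricted to a bounded range of $\epsilon$; your restriction to $\epsilon\leq 1$ is the honest formulation, and it is the only regime the paper uses ($\epsilon=j^{-1}$ in Section \ref{sec4}). Second, your argument proves the lemma for a specific class of multiplier-type smoothers; since the paper's lemma is itself tied to "the operators constructed in \cite{Bona,Tang}," this is appropriate, but it would be worth one sentence in your write-up confirming that the torus construction of \cite{Tang} is of this multiplier form with an even symbol, so that the hypothesis $\nabla\psi(0)=0$ is not silently added.
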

First, we use the operator given in Lemma \ref{lem4.1} to mollify the initial data, obtaining a sequence of smooth initial data,
\begin{equation*}
u_{0}^{j}=\rho_{j^{-1}}u_{0},~~\theta_{0}^{j}=\rho_{j^{-1}}\theta_{0},
\end{equation*}
for $j\geq 1$. Relative to the initial data $\{u_{0}^{j},\theta_{0}^{j}\}_{j\geq 1}$, we can obtain a sequence of maximal, pathwise solutions $(u_{j},\theta_{j})$ as the argument given in Section 3. In order to be able to apply the Lemma given below, we restrict the initial data $\|u_{0},\theta_{0}\|_{s}\leq M$ for any fixed $M$, by Lemma \ref{lem4.1} (i), we know the sequence of initial data $\{u_{0}^{j},\theta_{0}^{j}\}_{j\geq 1}$ is also bounded uniformly in $j$, thus,
\begin{equation*}
\sup_{j\geq 1}\|u_{0}^{j},\theta_{0}^{j}\|_{s}\leq C \|u_{0},\theta_{0}\|_{s}\leq C M,
\end{equation*}
where $C=C(s)$ is constant. Actually, this restriction can be generalized to that of $(u_{0},\theta_{0})\in X^{s}\times H^{s}$ a.s. by a cutting argument.

\begin{lemma}{\rm \cite[lemma 5.1]{Glatt}}\label{lem4.2}
Fix any $T\geq 0$ and define the collection of stopping times,
\begin{equation}\label{4.1}
\tau_{j}^{T}=\inf\left\{t\geq 0:\sup_{r\in[0,t]}\|u_{j},\theta_{j}\|_{s}^{2}+\int_{0}^{t}\|u_{j}\|_{s+1}^{2}dr\geq 1+\|u_{0}^{j},\theta_{0}^{j}\|_{s}^{2}\right\}\wedge T,
\end{equation}
and take $\tau_{j,k}^{T}:=\tau_{j}^{T}\wedge \tau_{k}^{T}$. Suppose that,
\begin{equation}\label{4.2}
\lim_{j\rightarrow\infty}\sup_{k\geq j}\mathbb{E}\left(\sup_{t\in[0,\tau_{j,k}^{T}]}\|u_{k}-u_{j},\theta_{k}-\theta_{j}\|_{s}^{2}
+\int_{0}^{\tau_{j,k}^{T}}\|u_{k}-u_{j}\|_{s+1}^{2}dt\right)=0,
\end{equation}
and
\begin{equation}\label{4.3}
\lim_{S\rightarrow 0}\sup_{j\geq 1}\mathbb{P}\left(\sup_{t\in[0,\tau_{j}^{T}\wedge S]}\|u_{j},\theta_{j}\|_{s}^{2}+2\int_{0}^{\tau_{j}^{T}\wedge S}\|u_{j}\|_{s+1}^{2}dt>\|u_{0}^{j},\theta_{0}^{j}\|_{s}^{2}+1\right)=0,
\end{equation}
then there exists a stopping time $\tau$ with $\mathbb{P}\{0< \tau\leq T\}=1$, the predictable processes $u(\cdot\wedge \tau)\in L^\infty(0,\infty;X^{s})\cap L^{2}_{loc}(0,\infty;X^{s+1})$ and $ \theta(\cdot\wedge \tau)\in L^\infty(0,\infty;H^{s})$ satisfy,
\begin{equation}\label{4.4}
\sup_{t\in[0,\tau]}\|u_{j_{i}}-u,\theta_{j_{i}}-\theta\|_{s}^{2}+\int_{0}^{\tau}\|u_{j_{i}}-u\|_{s+1}^{2}dt\rightarrow 0, ~~~~\mbox{a.s.}
\end{equation}
for some subsequence $j_{i}\rightarrow \infty$. Moreover,
\begin{equation}\label{4.5}
\sup_{t\in[0,\tau]}\|u,\theta\|_{s}^{2}+\int_{0}^{\tau}\|u\|_{s+1}^{2}dt\leq 1+\sup_{j}\|u_{0}^{j},\theta_{0}^{j}\|_{s}^{2}, ~~~~\mbox{a.s.}
\end{equation}
\end{lemma}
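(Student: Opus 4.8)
The statement is an abstract convergence lemma: it upgrades the $L^2(\Omega)$-Cauchy hypothesis \eqref{4.2} into almost sure convergence on a common, strictly positive random time interval, the localization being dictated by the exit times $\tau^T_j$ from \eqref{4.1}. Accordingly, the plan has three stages. First I would extract from \eqref{4.2} a subsequence along which the consecutive differences decay geometrically. Then I would promote the convergence to an almost sure statement via a Borel--Cantelli/telescoping argument, which simultaneously produces the limit $(u,\theta)$, the a.s. convergence \eqref{4.4}, and the a priori bound \eqref{4.5}. Finally I would use the uniform small-time control \eqref{4.3} to show that the limiting stopping time is a.s. strictly positive.

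For the extraction, write $\tau_{(i)}:=\tau^T_{j_i}$. Using \eqref{4.2}, pick $N_i$ with $\sup_{k\ge j}\mathbb{E}(\cdots)\le 4^{-i}$ for $j\ge N_i$, and choose $j_i$ strictly increasing with $j_i\ge N_i$; taking $j=j_i$, $k=j_{i+1}$ gives $\mathbb{E}\big(\sup_{t\in[0,\tau_{(i)}\wedge\tau_{(i+1)}]}\|u_{j_{i+1}}-u_{j_i},\theta_{j_{i+1}}-\theta_{j_i}\|_s^2+\int_0^{\tau_{(i)}\wedge\tau_{(i+1)}}\|u_{j_{i+1}}-u_{j_i}\|_{s+1}^2\,dt\big)\le 4^{-i}$. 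Chebyshev plus the Borel--Cantelli lemma then yields a full-measure set $\Omega_0$ on which, for $i\ge i_0(\omega)$, these consecutive differences are bounded by $2^{-i}$. Setting $\beta_n:=\inf_{l\ge n}\tau_{(l)}$, so that $[0,\beta_n]\subseteq[0,\tau_{(l)}\wedge\tau_{(l+1)}]$ for $l\ge n$, a telescoping estimate shows that $(u_{j_i},\theta_{j_i})$ is uniformly Cauchy on $[0,\beta_n]$ in $\mathcal{C}(X^s)\times\mathcal{C}(H^s)$ and that $u_{j_i}$ is Cauchy in $L^2(0,\beta_n;X^{s+1})$. Since $\beta_n\uparrow\tau:=\liminf_i\tau_{(i)}$, this defines $(u,\theta)$ on $[0,\tau)$ with the stated regularity and gives \eqref{4.4}. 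The bound \eqref{4.5} follows by passing to the limit in $Y_i(t):=\sup_{r\le t}\|u_{j_i},\theta_{j_i}\|_s^2+\int_0^t\|u_{j_i}\|_{s+1}^2\,dr\le 1+\|u_0^{j_i},\theta_0^{j_i}\|_s^2$, valid for $t<\tau_{(i)}$, using continuity/lower semicontinuity of these functionals under the established convergence.

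The main obstacle is the positivity $\mathbb{P}(\tau>0)=1$ (that $\tau\le T$ is immediate from $\tau_{(i)}\le T$). Each $Y_i$ is continuous and nondecreasing with $Y_i(0)=\|u_0^{j_i},\theta_0^{j_i}\|_s^2$ lying a full unit below the threshold $c_i=1+\|u_0^{j_i},\theta_0^{j_i}\|_s^2$, so each $\tau_{(i)}>0$, and from \eqref{4.3} the exit times are equi-positive, $\sup_j\mathbb{P}(\tau^T_j<S)\to0$ as $S\to0$ (the factor two in \eqref{4.3} supplies the strict margin placing $\{\tau^T_j<S\}$ inside the event controlled there). The difficulty is that equi-positivity alone does not control $\liminf_i\tau_{(i)}$, since the defect in \eqref{4.3} is uniform but not summable in $i$, so a naive union bound fails; indeed for merely independent equi-positive stopping times the liminf can vanish. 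The positivity must therefore exploit the \emph{coupling} of the $\tau_{(i)}$ furnished by the convergence already proved: on $\Omega_0$ the running quantities $Y_i$ converge uniformly on each $[0,\beta_n]$ and the thresholds $c_i$ converge ($u_0^{j}\to u_0$, $\theta_0^j\to\theta_0$ by Lemma \ref{lem4.1}), so the hitting times of the level $c_i$ cannot degenerate below the exit time of the common limit. Reconciling these index-dependent random intervals with the uniform estimate \eqref{4.3}---to conclude $\mathbb{P}(\tau\ge S)\to1$ as $S\to0$, hence $\mathbb{P}(\tau>0)=1$---is the technical crux I would expect to spend the most care on.

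It remains to record the measurability and global-in-time statements. Since $(u,\theta)$ is, on $\Omega_0$, a uniform limit of the continuous, $\{\mathcal{F}_t\}$-predictable processes $(u_{j_i},\theta_{j_i})$, the limit $(u(\cdot\wedge\tau),\theta(\cdot\wedge\tau))$ is again predictable and continuous, and the bound \eqref{4.5} shows $u(\cdot\wedge\tau)\in\mathcal{C}([0,\infty);X^s)\cap L^2_{loc}(0,\infty;X^{s+1})$ and $\theta(\cdot\wedge\tau)\in\mathcal{C}([0,\infty);H^s)$, as required. This completes the plan; identifying $(u,\theta)$ as a solution of the limiting system is carried out separately in the stability step that invokes this lemma.
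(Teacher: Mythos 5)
First, a point of comparison: this paper never proves Lemma~\ref{lem4.2} at all --- it is imported wholesale from \cite[Lemma 5.1]{Glatt}, and the paper's own contribution in Section~4 is the verification of the hypotheses \eqref{4.2} and \eqref{4.3}. So your proposal has to be judged against the Glatt-Holtz--Ziane argument. Your first stage is sound and is the standard route: geometric decay along a subsequence, Chebyshev plus Borel--Cantelli, telescoping over the common intervals, and passage to the limit for \eqref{4.4} and \eqref{4.5}. One repair is needed even there: take $\tau=\inf_i\tau^T_{j_i}$ rather than $\tau=\liminf_i\tau^T_{j_i}$. With the liminf, an index $i$ for which $\tau^T_{j_i}<\tau$ leaves $u_{j_i}-u$ completely uncontrolled on $(\tau^T_{j_i},\tau]$, so the supremum over the \emph{closed} interval $[0,\tau]$ in \eqref{4.4} is not justified; since each $\tau^T_{j_i}$ is strictly positive, positivity of the infimum and of the liminf are equivalent, so nothing is lost by this change.

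The genuine gap is the step you yourself flag and defer: $\mathbb{P}(\tau>0)=1$, which is the entire nontrivial content of the lemma. Your proposed mechanism (``the hitting times of the level $c_i$ cannot degenerate below the exit time of the common limit'') is circular: the limit process is constructed only on $[0,\tau)$, so on the event $\{\tau=0\}$ there is no limit process and hence no exit time to invoke; and, as you correctly observe, no union bound over the uniform estimate \eqref{4.3} can work. What actually closes the gap is a pathwise argument resting on time-continuity and on the unit margin between $Y_j(0)=\|u_0^j,\theta_0^j\|_s^2$ and the threshold $c_j=1+\|u_0^j,\theta_0^j\|_s^2$, where $Y_j(t):=\sup_{r\in[0,t]}\|u_j,\theta_j\|_s^2+\int_0^t\|u_j\|_{s+1}^2dr$. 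On $\{\inf_i\tau^T_{j_i}=0\}\cap\Omega_0$ pass to running-minimum (``record'') indices $\kappa=k_1<k_2<\cdots$, so that $t_m:=\tau^T_{j_{k_m}}\downarrow0$ while $\tau^T_{j_l}\ge t_m$ for every $l\in[\kappa,k_m]$; then the telescoped closeness is valid on all of $[0,t_m]$, and the hitting identity $Y_{j_{k_m}}(t_m)=c_{j_{k_m}}$ yields $c_{j_{k_m}}\le(1+\delta)Y_{j_\kappa}(t_m)+C\delta^{-1}4^{-\kappa}$. Letting $m\to\infty$ and using continuity at $t=0$ of the \emph{single fixed} process $Y_{j_\kappa}$, so that $Y_{j_\kappa}(t_m)\to c_{j_\kappa}-1$, together with the fact that the thresholds $c_{j_l}$ are a.s.\ Cauchy on $\Omega_0$ (apply the Borel--Cantelli bounds at $t=0$), one gets $1\le 2\delta\,\sup_j\|u_0^j,\theta_0^j\|_s^2+C\delta^{-1}4^{-\kappa}$, a contradiction for $\delta$ small and then $\kappa$ large. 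Note that this exploits the $+1$ gap and continuity, not merely the equi-positivity you extract from \eqref{4.3}; note also that your containment $\{\tau^T_j<S\}\subseteq\{\text{event in }\eqref{4.3}\}$ silently excludes the degenerate case $\int_0^{\tau_j^T}\|u_j\|_{s+1}^2dt=0$, which has to be ruled out using the equations themselves. Without an argument of this kind, your construction produces a stopping time that could vanish with positive probability, which is strictly weaker than the lemma.
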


In order to obtain the convergence a.s. needed for Theorem 2.1, we show that conditions $(\ref{4.2})$ and $(\ref{4.3})$ hold. The proof follows the idea of \cite{Glatt-Holtz}.

\begin{proof}[Proof of \eqref{4.2} and \eqref{4.3}]
The difference of the solutions, $v=u_{j}-u_{k}$ and $\eta=\theta_{j}-\theta_{k}$, satisfy,
\begin{eqnarray*}
\left\{\begin{array}{ll}
dv+Avdt+P(u_{j}\cdot\nabla) u_{j}dt-P(u_{k}\cdot\nabla) u_{k}dt\\ \qquad\qquad\qquad\qquad\qquad\qquad=P\eta e_{2}dt+P(f(u_{j},\theta_{j})-f(u_{k},\theta_{k}))d\mathcal{W},\\
d\eta+(u_{j}\cdot\nabla) \theta_{j}dt-(u_{k}\cdot\nabla) \theta_{k}dt=0.
\end{array}\right.
\end{eqnarray*}
Applying the It\^{o} formula to $\|\partial^{\alpha}v\|_{L^{2}}^{2}$, we have
\begin{align*}
&\quad d\|\partial^{\alpha}v\|_{L^{2}}^{2}+2\|\partial^{\alpha+1}v\|_{L^{2}}^{2}dt\nonumber\\&=-2[(\partial^{\alpha}v, \partial^{\alpha}P(u_{j}\cdot\nabla) u_{j})-(\partial^{\alpha}v, \partial^{\alpha}P(u_{k}\cdot\nabla) u_{k})]dt\\
&\quad+2(\partial^{\alpha}v, \partial^{\alpha}P\eta e_{d}) dt+ \|\partial^{\alpha}P(f(u_{j},\theta_{j})-f(u_{k},\theta_{k}))\|_{L_2(H,L^2)}^{2}dt\\
&\quad+2(\partial^{\alpha}v, \partial^{\alpha}P(f(u_{j},\theta_{j})-f(u_{k},\theta_{k}))) d\mathcal{W}\\&=(J_{1}+J_{2}+J_{3})dt+J_{4}d\mathcal{W},
\end{align*}
and
\begin{equation*}
d\|\partial^{\alpha}\eta\|_{L^{2}}^{2}=-2(\partial^{\alpha}\eta, \partial^{\alpha}(u_{j}\cdot\nabla \theta_{j})-\partial^{\alpha}(u_{k}\cdot\nabla \theta_{k}))dt=I_{1}dt.
\end{equation*}
For the nonlinear terms $J_{1}$ and $I_{1}$, we use Lemma \ref{lem2.1} to obtain,
\begin{align*}
\sum_{|\alpha|\leq s}|J_{1}+I_1|&\leq C\sum_{|\alpha|\leq s}|(\partial^{\alpha}v, \partial^{\alpha}P(v\cdot \nabla) u_{j}+\partial^{\alpha}P(u_{k}\cdot \nabla) v)\\&\qquad+( \partial^{\alpha}\eta,  \partial^{\alpha}(v\cdot \nabla) \theta_{j}+ \partial^{\alpha}(u_{k}\cdot \nabla)
\eta)|\\
&\leq C\|v\|_{s}^{2}(\|u_{k}\|_{s}^2+\|u_{j}\|_{s}^2+1)+\|v\|_{s+1}^{2}\\
&\quad+\|v\|_{s-1}^{2}\|\theta_{j}\|_{s+1}^{2}+(\|\eta\|_{s}^{2}+\|v\|_{s}^{2})(1+\|\theta_{j}\|_{s}+\|\theta_{k}\|_{s}),
\end{align*}
where the constant $C=C(s, \mathbb{T}^{d})$ is independent of $j$ and $k$. For $J_{2}$ and $J_{3}$, using the H\"{o}lder inequality and condition (\ref{2.7}), we easily get
\begin{equation*}
|J_{2}|+|J_{3}|\leq C \|v,\eta\|_{s}^{2}.
\end{equation*}
For the stochastic term $J_{4}$, similar to the estimate (\ref{3.21}), we have for any stopping time $\tau$,
\begin{eqnarray*}
 &&\mathbb{E}\left(\sup_{r\in [0,\tau]}\left|\int_{0}^{r}J_{4}d\mathcal{W}\right|\right)\leq C\mathbb{E}\left(\int_{0}^{\tau}\sum_{l\geq 1}J_4^{2}dr\right)^{\frac{1}{2}}\\
&&\qquad\qquad\qquad\qquad\qquad\leq \frac{1}{2} \mathbb{E}\left(\sup_{r\in[0,\tau]}\|\partial^{\alpha}v\|_{L^{2}}^{2}\right)+C \mathbb{E}\int_{0}^{\tau}\|v, \eta\|_{s}^{2}dr.
\end{eqnarray*}
Combining the above estimates   and the definition of $\tau_{j,k}^{T}$, to conclude,
\begin{eqnarray*}
&&\quad \mathbb{E}\left(\sup_{r\in[0,\tau_{j,k}^{T}\wedge t]}\|v, \eta\|_{s}^{2}
\right)+\mathbb{E}\int_{0}^{\tau_{j,k}^{T}\wedge t}\|v\|_{s+1}^{2}dr\\&& \leq \mathbb{E}\|v_{0},\eta_{0}\|_{s}^{2}+C\mathbb{E}\int_{0}^{\tau_{j,k}^{T}\wedge t}\|v\|_{s-1}^{2}\|\theta_{j}\|_{s+1}^{2}dr\\
&&\quad+C \mathbb{E}\int_{0}^{\tau_{j,k}^{T}\wedge t}\|\eta, v\|_{s}^{2}(1+\|\theta_{j}\|_{s}+\|\theta_{k}\|_{s}+\|u_{j}\|_{s}^2+\|u_{k}\|_{s}^2)dr\\
&&\leq C\mathbb{E}\|v_{0},\eta_{0}\|_{s}^{2}
+C\int_{0}^{t}\mathbb{E}\left(\sup_{\xi\in [0,\tau_{j,k}^{T}\wedge r]}(\|v,\eta\|_{s}^{2}+\|v\|_{s-1}^{2}\|\theta_{j}\|_{s+1}^{2})\right)dr.
\end{eqnarray*}
The Gronwall lemma yields,
\begin{equation*}
\mathbb{E}\left(\sup_{t\in[0,\tau_{j,k}^{T}]}\|v,\eta\|_{s}^{2}\right)+\mathbb{E}\int_{0}^{\tau_{j,k}^{T}\wedge t}\|v\|_{s+1}^{2}dr\leq C \mathbb{E}\|v_{0},\eta_{0}\|_{s}^{2}+C\mathbb{E}\left(\sup_{t\in [0,\tau_{j,k}^{T}]}\|v\|_{s-1}^{2}\|\theta_{j}\|_{s+1}^{2}\right),
\end{equation*}
where $C=C(s,T,M,\mathbb{T}^{d})$ is a positive constant independent of $j,k$. Therefore, (\ref{4.2}) will follow once we show that
\begin{equation}\label{4.6}
\lim_{j\rightarrow \infty}\sup_{k\geq j}\mathbb{E}\left(\sup_{t\in[0,\tau_{j,k}^{T}]}\|v\|_{s-1}^{2}\|\theta_{j}\|_{s+1}^{2}\right)=0.
\end{equation}
 Due to the coupled construction of the system, consequently, the terms $\|\eta\|_{{s-1}}^{2}(\|u_{j}\|_{s+1}^{2}+\|\theta_{j}\|_{s+1}^{2})$, $\|v\|_{s-1}^{2}\|u_{j}\|_{s+1}^{2}$ will appear when we build the estimates after applying the It\^{o} product formula to function $\|v\|_{s-1}^{2}\|\theta_{j}\|_{s+1}^{2}$. Therefore, we need to show that
\begin{equation}\label{4.7}
\lim_{j\rightarrow \infty}\sup_{k\geq j}\mathbb{E}\left(\sup_{t\in[0,\tau_{j,k}^{T}]}(\|v\|_{{s-1}}^{2}+\|\eta\|_{s-1}^{2})(\|u_{j}\|_{s+1}^{2}+\|\theta_{j}\|_{s+1}^{2})\right)=0.
\end{equation}
For convenience, we write,
\begin{align*}
d\|\partial^\alpha u_j\|_{L^2}^2+2\|\partial^{\alpha+1} u_j\|_{L^2}^2dt&=2\bigg((\partial^{\alpha}u_{j}, -\partial^{\alpha}P(u_{j}\cdot\nabla) u_{j}) +(\partial^{\alpha}u_{j},\partial^{\alpha}\theta_{j}e_{d})\nonumber\\
&\quad+\frac{1}{2}\|\partial^{\alpha}Pf(u_{j},\theta_{j})\|^{2}_{L_{2}(H;L^{2})}\bigg)dt+2(\partial^{\alpha}u_{j},\partial^{\alpha}Pf(u_{j},\theta_{j})) d\mathcal{W}\nonumber\\
&=(\mathcal{J}_1+\mathcal{J}_2+\mathcal{J}_3)dt+\mathcal{J}_4d\mathcal{W},
\end{align*}
and
\begin{eqnarray*}
d\|\partial^\alpha \theta_j\|_{L^2}^2=2(\partial^{\alpha}\theta_{j}, -\partial^{\alpha}(u_{j}\cdot\nabla) \theta_{j})dt=\mathcal{I}_1dt.
\end{eqnarray*}
By the It\^{o} product formula, we have,
\begin{eqnarray}\label{4.8}
&&\quad d\|v\|_{{s-1}}^{2}\|u_{j}\|_{s+1}^{2}=\|v\|_{{s-1}}^{2}d\|u_{j}\|_{s+1}^{2}+\|u_{j}\|_{s+1}^{2}d\|v\|_{{s-1}}^{2}
+d\|v\|_{{s-1}}^{2}d\|u_{j}\|_{s+1}^{2}\nonumber\\
&&=-2(\|v\|_{{s-1}}^{2}\|u_{j}\|_{s+2}^{2}+\|u_{j}\|_{s+1}^{2}\|v\|_{s}^{2})dt\nonumber \\
&&\quad+2\|v\|_{{s-1}}^{2}\sum_{|\alpha|\leq s+1}(\mathcal{J}_1+\mathcal{J}_2+\mathcal{J}_3)dt+2\sum_{|\alpha|\leq s+1}\|v\|_{{s-1}}^{2}\mathcal{J}_4 d\mathcal{W}\nonumber\\
&&\quad+2\|u_{j}\|_{s+1}^{2}\sum_{|\alpha|\leq s-1}(J_1+J_2+J_3)dt+2\sum_{|\alpha|\leq s-1}\|u_{j}\|_{s+1}^{2}J_4 d\mathcal{W}\nonumber\\
&&\quad+4\left(\sum_{|\alpha|\leq s-1}J_4\cdot\sum_{|\alpha|\leq s+1}\mathcal{J}_4\right)dt,
\end{eqnarray}
and
\begin{eqnarray}\label{4.9}
&&\quad d\|\eta\|_{{s-1}}^{2}\|u_{j}\|_{s+1}^{2}=\|\eta\|_{{s-1}}^{2}d\|u_{j}\|_{s+1}^{2}+\|u_{j}\|_{s+1}^{2}d\|\eta\|_{{s-1}}^{2}
+d\|\eta\|_{{s-1}}^{2}d\|u_{j}\|_{s+1}^{2}\nonumber\\
&&=-2\|\eta\|_{{s-1}}^{2}\|u_{j}\|_{s+2}^{2}dt+2\|\eta\|_{{s-1}}^{2}\sum_{|\alpha|\leq s+1}(\mathcal{J}_1+\mathcal{J}_2+\mathcal{J}_3)dt+2\sum_{|\alpha|\leq s+1}\|\eta\|_{{s-1}}^{2}\mathcal{J}_4 d\mathcal{W}\nonumber\\
&&\quad-2\|u_{j}\|_{s+1}^{2}\sum_{|\alpha|\leq s-1}I_1 dt.
\end{eqnarray}
By Lemma \ref{lem2.2}, conditions (\ref{2.7}), and the H\"{o}lder inequality, we have,
\begin{eqnarray*}
&&\left|2\|\eta\|_{{s-1}}^{2}\sum_{|\alpha|\leq s+1}\mathcal{J}_1\right|\leq C\|\eta\|_{{s-1}}^{2}\|u_{j}\|_{s+1}^{2}\|u_{j}\|_{s},\\
&&\left|2\|\eta\|_{{s-1}}^{2}\sum_{|\alpha|\leq s+1}(\mathcal{J}_2+\mathcal{J}_3)\right|\leq C\|u_{j}\|_{s+1}^{2}(\|\eta\|_{{s-1}}^{2}+\|v\|_{s-1}^{2})(\|\theta_{j}\|_{s}+\|u_{k}\|_{s}),\\
&&\left|2\|v\|_{{s-1}}^{2}\sum_{|\alpha|\leq s+1}(\mathcal{J}_1+\mathcal{J}_2+\mathcal{J}_3)\right|\leq C \|v\|_{s-1}^{2}+C\|v\|_{s-1}^{2}(\|u_{j}\|_{s+1}^{2}+\|\theta_{j}\|_{s+1}^{2}),\\
&&\left|2\|u_{j}\|_{s+1}^{2}\sum_{|\alpha|\leq s-1}(J_2+J_3)+4\left(\sum_{|\alpha|\leq s-1}J_4\cdot\sum_{|\alpha|\leq s+1}\mathcal{J}_4\right)\right|\\
&&\leq C (\|v\|_{s-1}^{2}+\|\eta\|_{s-1}^{2})(1+\|u_{j}\|_{s+1}^{2}+\|\theta_{j}\|_{s+1}^{2}),\\
&&\left|2\|u_{j}\|_{s+1}^{2}\sum_{|\alpha|\leq s-1}J_1\right|\leq C \|u_{j}\|_{s+1}^{2}\|v\|_{s-1}(\|v\|_{L^{\infty}}\|u_{j}\|_{s}+\|\nabla u_{j}\|_{L^{\infty}}\|v\|_{s-1})\\&&\qquad\qquad\qquad\qquad\quad+C \|u_{j}\|_{s+1}^{2}\|v\|_{s-1}(\|\nabla u_{k}\|_{L^{\infty}}\|v\|_{s-1}+\|\nabla v\|_{L^{\infty}}\|u_{k}\|_{s-1})\\
&&\quad\qquad\qquad\qquad\qquad\leq C\|u_{j}\|_{s+1}^{2}\|v\|_{s-1}^{2}(\|u_{j}\|_{s}+\|u_{k}\|_{s}+\|u_{k}\|_{s-1}^{2})+\|u_{j}\|_{s+1}^{2}\|v\|_{s}^{2}.
\end{eqnarray*}
and
\begin{eqnarray}\label{4.10}
&&~\left|-2\|u_{j}\|_{s+1}^{2}\sum_{|\alpha|\leq s-1}I_1\right|\nonumber\\ &&\leq C\|u_{j}\|_{s+1}^{2}\|\eta\|_{{s-1}}\|v\cdot\nabla \theta_{j}\|_{s-1}+2\|u_{j}\|_{s+1}^{2}\sum_{|\alpha|\leq s-1}|( \partial^{\alpha}\eta, \partial^{\alpha}(u_{k}\cdot \nabla\eta) )|\nonumber\\
&&\leq C\|u_{j}\|_{s+1}^{2}(\|\eta\|_{{s-1}}^{2}\nonumber\\&&\quad+\|v\|_{s-1}^{2})\|\theta_{j}\|_{s}+2\|u_{j}\|_{s+1}^{2}\sum_{|\alpha|\leq s-1}|( \partial^{\alpha}\eta, \partial^{\alpha}(u_{k}\cdot \nabla\eta) )|.
\end{eqnarray}
 Note that the second term on the right-hand side of (4.10), cannot be estimated by Lemma \ref{2.1}, the more delicate estimates are required. Using the H\"{o}lder inequality and Gagliardo-Nirenberg inequality, the term $\|\partial^{\alpha}u_{k}\cdot \nabla \eta\|$ can be controlled as follows,
\begin{eqnarray}\label{4.11}
\sum_{|\alpha|\leq s-1}\|\partial^{\alpha}u_{k}\cdot \nabla \eta\|_{L^{2}}\leq \left\{\begin{array}{ll}
C\|u_{k}\|_{H^{s-1,6}}\|\nabla \eta\|_{L^{3}}\leq C \|u_{k}\|_{s}\|\eta\|_{s-1}, ~d=3,\\
C \|u_{k}\|_{H^{s-1,4}}\|\nabla v\|_{L^{4}}\leq C \|u_{k}\|_{s}\|\eta\|_{s-1}, ~d=2,
\end{array}\right.
\end{eqnarray}
which together with (\ref{4.10}), to obtain,
\begin{equation*}
\left|-2\|u_{j}\|_{s+1}^{2}\sum_{|\alpha|\leq s-1}I_1\right|\leq C\|u_{j}\|_{s+1}^{2}(\|\eta\|_{{s-1}}^{2}+\|v\|_{s-1}^{2})(\|\theta_{j}\|_{s}+\|u_{k}\|_{s}).
\end{equation*}
Using the similar argument to (3.21) to estimate the stochastic term,
\begin{eqnarray*}
&&\quad 2\mathbb{E}\left(\sup_{r\in [0,\tau]}\left|\int_{0}^{r}\sum_{|\alpha|\leq s-1}\|u_{j}\|_{s+1}^{2}J_4 d\mathcal{W}\right|\right)\\
&&\leq C\mathbb{E}\left(\int_{0}^{\tau}\|u_{j}\|_{s+1}^{4}\left(\sum_{|\alpha|\leq s-1}J_4\right)^{2} dt\right)^{\frac{1}{2}}\\
&&\leq C\mathbb{E}\left(\int_{0}^{\tau}\|v\|_{{s-1}}^{2}\|u_{j}\|_{s+1}^{2}(\|u_{j}\|_{s+1}^{2}\|v\|_{{s-1}}^{2}
+\|u_{j}\|_{s+1}^{2}\|\eta\|_{{s-1}}^{2})dt\right)^{\frac{1}{2}}\\
&&\leq \frac{1}{2}\mathbb{E}\left(\sup_{t\in [0,\tau]}\|u_{j}\|_{s+1}^{2}\|v\|_{{s-1}}^{2}\right)+C\mathbb{E}\int_{0}^{\tau}(\|u_{j}\|_{s+1}^{2}\|v\|_{{s-1}}^{2}+\|u_{j}\|_{s+1}^{2}\|\eta\|_{{s-1}}^{2})dt,
\end{eqnarray*}
and
\begin{eqnarray*}
&&\quad 2\mathbb{E}\left(\sup_{r\in [0,\tau]}\left|\int_{0}^{r}\sum_{|\alpha|\leq s+1}\|\eta, v\|_{{s-1}}^{2}\mathcal{J}_4 d\mathcal{W}\right|\right)\\&&\leq \frac{1}{2}\mathbb{E}\left(\sup_{t\in [0,\tau]}\|u_{j}\|_{s+1}^{2}\|\eta, v\|_{{s-1}}^{2}\right)\\&&\quad+C\mathbb{E}\int_{0}^{\tau}(\|u_{j}\|_{s+1}^{2}\|\eta, v\|_{{s-1}}^{2}
+\|\theta_{j}\|_{s+1}^{2}\|\eta, v\|_{{s-1}}^{2}+\|\eta, v\|_{s-1}^{2})dt.
\end{eqnarray*}

Again, using the It\^{o} product formula to $d\|v\|_{{s-1}}^{2}\|\theta_{j}\|_{s+1}^{2}$ and $d\|\eta\|_{{s-1}}^{2}\|\theta_{j}\|_{s+1}^{2}$, we obtain,
\begin{eqnarray}\label{4.12}
&&\quad d\|v\|_{{s-1}}^{2}\|\theta_{j}\|_{s+1}^{2}=\|v\|_{{s-1}}^{2}d\|\theta_{j}\|_{s+1}^{2}+\|\theta_{j}\|_{s+1}^{2}d\|v\|_{{s-1}}^{2}
+d\|v\|_{{s-1}}^{2}d\|\theta_{j}\|_{s+1}^{2}\nonumber\\
&&\qquad\qquad\qquad\qquad=-2\|\theta_{j}\|_{s+1}^{2}\|v\|_{s}^{2}dt-2\|v\|_{{s-1}}^{2}\sum_{|\alpha|\leq s+1}\mathcal{I}_1 dt\nonumber\\&&\qquad\qquad\qquad\qquad\quad+2\|\theta_{j}\|_{s+1}^{2}\sum_{|\alpha|\leq s-1}(J_1+J_2+J_3)dt+J_{4}d\mathcal{W},
\end{eqnarray}
and
\begin{eqnarray}\label{4.13}
&&\quad d\|\eta\|_{{s-1}}^{2}\|\theta_{j}\|_{s+1}^{2}=\|\theta_{j}\|_{s+1}^{2}d\|\eta\|_{{s-1}}^{2}+\|\eta\|_{{s-1}}^{2}d\|\theta_{j}\|_{s+1}^{2}\nonumber\\
&&\qquad\qquad\qquad\qquad=-2\|\theta_{j}\|_{s+1}^{2}\sum_{|\alpha|\leq s-1}I_1 dt-2\|\eta\|_{{s-1}}^{2}\sum_{|\alpha|\leq s+1}\mathcal{I}_1 dt.
\end{eqnarray}
We mainly focus on the nonlinear terms in (\ref{4.12}) and (\ref{4.13}), and the rest of the terms are standard, so we omit them. By Lemma \ref{lem2.2}, we have,
\begin{eqnarray*}
&&\quad\left|-2\|v\|_{{s-1}}^{2}\sum_{|\alpha|\leq s+1}\mathcal{I}_1+2\|\theta_{j}\|_{s+1}^{2}\sum_{|\alpha|\leq s-1}J_1\right|\\ &&\leq C\|v\|_{{s-1}}^{2}(\|\theta_{j}\|_{s+1}^{2}+\|u_{j}\|_{s+1}^{2})\|\theta_{j}\|_{s}\\
&&\quad+C\|\theta_{j}\|_{s+1}^{2}\|v\|_{{s-1}}(\|v\|_{L^{\infty}}\|u_{j}\|_{s}+\|\nabla u_{j}\|_{L^{\infty}}\|v\|_{s-1})\\
&&\quad+C\|\theta_{j}\|_{s+1}^{2}\|v\|_{{s-1}}(\|v\|_{s-1}\|\nabla \theta_{k}\|_{L^{\infty}}+\|\nabla v\|_{L^{\infty}}\|u_{k}\|_{s-1})\\
&&\leq C \|v\|_{{s-1}}^{2}(\|\theta_{j}\|_{s+1}^{2}+\|u_{j}\|_{s+1}^{2})(\|\theta_{j}\|_{s}+\|u_{j}\|_{s}+\|\theta_{k}\|_{s}+\|u_{k}\|_{s-1}^{2})\\
&&\quad+\|\theta_{j}\|_{s+1}^{2}\|v\|_{s}^{2}.
\end{eqnarray*}
Using (\ref{4.11}) and the H\"{o}lder inequality,
\begin{eqnarray*}
&&\quad\left|-2\|\theta_{j}\|_{s+1}^{2}\sum_{|\alpha|\leq s-1}I_1-2\|\eta\|_{{s-1}}^{2}\sum_{|\alpha|\leq s+1}\mathcal{I}_1 \right|\\ &&\leq C\|\eta\|_{{s-1}}^{2}(\|\theta_{j}\|_{s+1}^{2}+\|u_{j}\|_{s+1}^{2})(\|\theta_{j}\|_{s}+\|u_{j}\|_{s})\\
&&\quad+C( \|\eta\|_{{s-1}}^{2}+\|v\|_{s-1}^{2})\|\theta_{j}\|_{s+1}^{2}\|\theta_{j}\|_{s}+C\|\theta_{j}\|_{s+1}^{2} \|\eta\|_{{s-1}}^{2}\|u_{k}\|_{s}.
\end{eqnarray*}
Combining the estimates above and using the definition of $\tau_{j,k}^{T}$ give,
\begin{eqnarray*}
&&\quad \mathbb{E}\left(\sup_{s\in [0, \tau_{j,k}^{T}\wedge t]}(\|v\|_{{s-1}}^{2}+\|\eta\|_{s-1}^{2})(\|u_{j}\|_{s+1}^{2}+\|\theta_{j}\|_{s+1}^{2})\right)\\
&&\quad+\mathbb{E}\int_{0}^{\tau_{j,k}^{T}\wedge t}\|v\|_{s}^{2}(\|\theta_{j}\|_{s+1}^{2}+\|\theta_{j}\|_{s+1}^{2})+(\|v\|_{{s-1}}^{2}+\|\eta\|_{s-1}^{2})\|u_{j}\|_{s+2}^{2}dr\\
&&\leq \mathbb{E}(\|v_{0}\|_{{s-1}}^{2}+\|\eta_{0}\|_{s-1}^{2})(\|u_{0}^{j}\|_{s+1}^{2}+\|\theta_{0}^{j}\|_{s+1}^{2})+C \mathbb{E}\int_{0}^{t}\sup_{\xi\in [0, \tau_{j,k}^{T}\wedge r]}(\|v, \eta\|_{s-1}^{2})dr\\
&&\quad+ C \mathbb{E}\int_{0}^{t}\sup_{\xi\in [0, \tau_{j,k}^{T}\wedge r]}(\|v\|_{{s-1}}^{2}+\|\eta\|_{s-1}^{2})(\|u_{j}\|_{s+1}^{2}+\|\theta_{j}\|_{s+1}^{2})dr,
\end{eqnarray*}
for any $t>0$. Thus, by applying the Gronwall inequality again, we conclude that,
\begin{eqnarray}\label{4.14}
&&\quad \mathbb{E}\left(\sup_{r\in [0, \tau_{j,k}^{T}\wedge t]}(\|v\|_{{s-1}}^{2}+\|\eta\|_{s-1}^{2})(\|u_{j}\|_{s+1}^{2}+\|\theta_{j}\|_{s+1}^{2})\right)\nonumber\\&&\leq C \mathbb{E}(\|v_{0}\|_{{s-1}}^{2}+\|\eta_{0}\|_{s-1}^{2})(\|u_{0}^{j}\|_{s+1}^{2}+\|\theta_{0}^{j}\|_{s+1}^{2})\nonumber\\&&\quad+C \mathbb{E}\left(\sup_{r\in [0, \tau_{j,k}^{T}\wedge t]}\|v, \eta\|_{s-1}^{2}\right),
\end{eqnarray}
where constant $C$ is independent of $j,k$. By the dominated convergence theorem and Lemma \ref{lem4.1} (ii) and (iii)  we obtain,
\begin{eqnarray*}
&&\quad \lim_{j\rightarrow \infty}\sup_{k\geq j}\mathbb{E}(\|v_{0}\|_{{s-1}}^{2}+\|\eta_{0}\|_{s-1}^{2})(\|u_{0}^{j}\|_{s+1}^{2}+\|\theta_{0}^{j}\|_{s+1}^{2})\nonumber\\
&&\leq C\lim_{j\rightarrow \infty}\sup_{k\geq j}\mathbb{E}(j^{2}(\|v_{0}\|_{{s-1}}^{2}+\|\eta_{0}\|_{s-1}^{2}))(\|u_{0}\|_{s}^{2}+\|\theta_{0}\|_{s}^{2})=0.
\end{eqnarray*}
For the second term on the right-hand side of \eqref{4.14}, we refer back to the estimates above. By these estimates, the Gronwall inequality and the properties of the smooth operators $\rho_{\varepsilon}$, we may infer that,
\begin{equation*}
\lim_{j\rightarrow \infty}\sup_{k\geq j}\mathbb{E}\left(\sup_{r\in [0, \tau_{j,k}^{T}\wedge t]}\|v, \eta\|_{s-1}^{2}\right)=0.
\end{equation*}
We have now established (\ref{4.7}) and hence condition (\ref{4.2}) follows.

Next, we focus on the second condition (\ref{4.3}) required by Lemma \ref{4.2}. By the It\^{o} formula,
\begin{eqnarray*}
&&\quad\sup_{t\in[0,\tau_{j}^{T}\wedge S]}\|u_{j},\theta_{j}\|_{s}^{2}+2\int_{0}^{\tau_{j}^{T}\wedge S}\|u_{j}\|_{s+1}^{2}dt\\
&&\leq\|u_{0}^{j},\theta_{0}^{j}\|_{s}^{2}+\sum_{|\alpha|\leq s}\int_{0}^{\tau_{j}^{T}\wedge S}|\mathcal{J}_1+\mathcal{J}_2+\mathcal{J}_3+\mathcal{I}_1|dt+\sup_{t\in[0,\tau_{j}^{T}\wedge S]}\left|\int_{0}^{t}\sum_{|\alpha|\leq s}\mathcal{J}_4 d\mathcal{W}\right|,
\end{eqnarray*}
leading to,
\begin{eqnarray}\label{4.15}
&&\quad\mathbb{P}\left(\sup_{t\in[0,\tau_{j}^{T}\wedge S]}\|u_{j},\theta_{j}\|_{m}^{2}+2\int_{0}^{\tau_{j}^{T}\wedge S}\|u_{j}\|_{m+1}^{2}dt>\|u_{0}^{j},\theta_{0}^{j}\|_{s}^{2}+1\right)\nonumber\\
&&\leq \mathbb{P}\left(\sum_{|\alpha|\leq s}\int_{0}^{\tau_{j}^{T}\wedge S}|\mathcal{J}_1+\mathcal{J}_2+\mathcal{J}_3+\mathcal{I}_1|dt> \frac{1}{2}\right)\nonumber\\
&&\quad+\mathbb{P}\left(\sup_{t\in[0,\tau_{j}^{T}\wedge S]}\left|\int_{0}^{t}\sum_{|\alpha|\leq s}\mathcal{J}_4d\mathcal{W}\right|>\frac{1}{2}\right).
\end{eqnarray}
For the first term on the right-hand side of \eqref{4.15}, applying the Chebyshev inequality and Lemma \ref{lem2.2} give,
\begin{eqnarray}\label{4.16}
&&\quad \mathbb{P}\left(\sum_{|\alpha|\leq s}\int_{0}^{\tau_{j}^{T}\wedge S}|\mathcal{J}_1+\mathcal{J}_2+\mathcal{J}_3+\mathcal{I}_1|dt> \frac{1}{2}\right)\nonumber\\
&&\leq \mathbb{E}\int_{0}^{\tau_{j}^{T}\wedge S}(\|\theta_{j}\|_{s}^{2}+\|u_{j}\|_{s}^{2})(1+\|\theta_{j}\|_{s}+\|u_{j}\|_{s})dt\leq CS,
\end{eqnarray}
where the constant $C$ is independent of $k$ and $S$. For second term on the right hand in (\ref{4.15}), applying Doob's inequality and the It\^{o} isometry formula, we obtain,
\begin{eqnarray}\label{4.17}
&&\quad\mathbb{P}\left(\sup_{t\in[0,\tau_{j}^{T}\wedge S]}\left|\int_{0}^{t}\sum_{|\alpha|\leq s}\mathcal{J}_4d\mathcal{W}\right|>\frac{1}{2}\right)\nonumber\\
&&\leq C\mathbb{E}\left(\int_{0}^{\tau_{j}^{T}\wedge S}\sum_{|\alpha|\leq s}\mathcal{J}_4d\mathcal{W}\right)^{2} \leq C\mathbb{E}\int_{0}^{\tau_{j}^{T}\wedge S}\sum_{|\alpha|\leq s}\mathcal{J}_4^{2}dt\nonumber\\
&&\leq C\mathbb{E}\int_{0}^{\tau_{j}^{T}\wedge S}\|u_{j}\|_{s}^{2}(1+\|u_{j}\|_{s}^{2}+\|\theta_{j}\|_{s}^{2})dt\leq CS,
\end{eqnarray}
where the constant $C$ is independent of $k$ and $S$. Combining (\ref{4.16}) and (\ref{4.17}), the proof of condition (\ref{4.3}) is now complete.
\end{proof}
Both conditions (\ref{4.2}) and (\ref{4.3}) have been established, following Lemma {\ref{lem4.2}}, we thus obtain the desired results of strong convergence $\mathbb{P}$ a.s. and the uniform bound of the approximate solutions. Hence, we can show that $(u,\theta,\tau)$ is a local pathwise solution in the sense of Definition \ref{def2.1} using the same argument as \cite{Breit}. We have imposed the bound on the initial data $(u_{0},\theta_{0})$ in order to apply Lemma \ref{lem4.2}, which can be removed as mentioned in Section 3 using a cutting argument, and then extend the local solution to the maximal pathwise solution as shown in subsection 3.4 via maximality arguments. The proof of Theorem \ref{the2.1} is now complete.

\section{The global existence for the case of additive noise}\label{sec5}
\setcounter{equation}{0}

In this section, we shall establish the global existence of strong pathwise solutions to 2D system \eqref{Equ1.1} driven by an additive noise with large initial data. We remark that the local existence of such a pathwise solution can be obtained by a more direct approach given in \cite{Kim1} where the local existence of pathwise solution was proved for the stochastic Euler equations with additive noise using a change of variable to transform the stochastic PDE to a random PDE such that the deterministic result can be applied.

 We give a proposition offering a criterion for the global existence of solution.

\begin{proposition}\label{pro5.1} Fix a stochastic basis $\mathcal{S}:=(\Omega,\mathcal{F}, \{\mathcal{F}_{t}\}_{t\geq 0}, \mathbb{P}, \mathcal{ W})$. If the triple $(u,\theta, \xi)$ is a unique maximal pathwise solution. Define the stopping time $\tau_{R}$ as follows,
\begin{equation*}
\tau_{R}:=\inf\left\{T\geq 0:\sup_{t\in[0,\xi\wedge T]}\|\nabla u,\nabla\theta\|_{L^{\infty}}>R\right\}.
\end{equation*}
Then, for any $T, R>0$,
\begin{equation}\label{5.1}
\mathbb{E}\left(\sup_{t\in[0,T\wedge \xi\wedge \tau_{R}]}\|u,\theta\|_{s}^{2}\right)+\mathbb{E}\int_{0}^{T\wedge \xi\wedge \tau_{R}}\|u\|_{s+1}^{2}dt<\infty,
\end{equation}
and $\tau_{R}\leq \xi$ a.s. Furthermore, if $\lim_{R\rightarrow \infty}\tau_{R}=\infty$, then $(u,\theta)$ is a global solution in the sense of Definition \ref{def2.2}.
\end{proposition}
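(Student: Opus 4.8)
The plan is to establish the three assertions in turn: first the a priori $H^s$ bound \eqref{5.1} on the truncated interval, then the comparison $\tau_R\le\xi$, and finally the global existence under the limiting hypothesis. The estimate \eqref{5.1} is the workhorse, while the last two assertions are essentially logical consequences of it together with the blow-up characterization in Definition \ref{def2.2}.

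For \eqref{5.1}, I would apply the It\^o formula to $\|u\|_s^2+\|\theta\|_s^2$ and integrate over $[0,t\wedge\xi\wedge\tau_R\wedge\tau_K]$, where $\tau_K:=\inf\{t\ge 0:\|u,\theta\|_s\ge K\}$ is an auxiliary localizing time introduced so that every term is genuinely integrable before the bound is known. The Stokes term supplies the dissipation $+2\int\|u\|_{s+1}^2$ on the left, while by Lemma \ref{lem2.2}(ii) the transport terms $\sum_{|\alpha|\le s}(\partial^{\alpha}(u\cdot\nabla)u,\partial^{\alpha}u)$ and $\sum_{|\alpha|\le s}(\partial^{\alpha}(u\cdot\nabla)\theta,\partial^{\alpha}\theta)$ are bounded by $C(\|\nabla u\|_{L^{\infty}}+\|\nabla\theta\|_{L^{\infty}})\|u,\theta\|_s^2$. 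The crucial point is that on $[0,\tau_R]$ the definition of $\tau_R$ forces $\|\nabla u,\nabla\theta\|_{L^{\infty}}\le R$, so these are controlled by $CR\|u,\theta\|_s^2$; in particular the absence of diffusion in the temperature equation is harmless, since the transport term is estimated purely through the $L^{\infty}$ gradient bound rather than through any smoothing. The buoyancy coupling $(\partial^{\alpha}u,\partial^{\alpha}\theta e_2)$ contributes at most $C\|u,\theta\|_s^2$, the It\^o correction from the additive noise is integrable by \eqref{2.10}, and the martingale term is handled by the Burkholder--Davis--Gundy inequality \eqref{2.4} exactly as in \eqref{3.10}, absorbing $\tfrac12\mathbb{E}\sup\|u\|_s^2$ on the left. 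Gronwall's inequality then yields a bound with a constant $C(R,T,\text{data})$ that is independent of $K$, and letting $K\to\infty$ by monotone convergence gives \eqref{5.1}.

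For $\tau_R\le\xi$ a.s., I would argue directly from the blow-up property built into Definition \ref{def2.2}. Since $u\in\mathcal{C}([0,\xi);X^s)$ and $X^s\hookrightarrow W^{1,\infty}$ for $s>\tfrac{d}{2}+1$, the map $t\mapsto\|\nabla u(t)\|_{L^{\infty}}$ is continuous on $[0,\xi)$, and the defining property $\lim_n\tau_n=\xi$ together with the divergence of $\sup_{[0,\tau_n]}\|\nabla u\|_{L^{\infty}}$ on $\{\xi<\infty\}$ shows that $\sup_{t\in[0,\xi)}\|\nabla u(t)\|_{L^{\infty}}=\infty$ on that event. Hence on $\{\xi<\infty\}$ the threshold $R$ is crossed at some time $\le\xi$, so $\tau_R\le\xi$, while on $\{\xi=\infty\}$ the inequality $\tau_R\le\infty=\xi$ is trivial; thus $\tau_R\le\xi$ a.s.

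Finally, note that $R\mapsto\tau_R$ is non-decreasing, since a larger threshold can only be crossed later, so $\lim_{R\to\infty}\tau_R=\sup_R\tau_R$ exists and, by the previous step, satisfies $\sup_R\tau_R\le\xi$ a.s. If $\lim_{R\to\infty}\tau_R=\infty$ a.s., this forces $\xi=\infty$ a.s., i.e. $(u,\theta)$ is global in the sense of Definition \ref{def2.2}. I expect the main obstacle to be the a priori estimate \eqref{5.1}: although the gradient truncation by $\tau_R$ effectively linearizes the nonlinear terms, one must carefully track the coupling between the momentum and temperature equations and keep every bound uniform in the localization parameter $K$, since it is precisely the \emph{finiteness} of \eqref{5.1}, rather than its particular value, that certifies that the solution does not blow up in $H^s$ before $\tau_R$. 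The genuinely hard analytic input, namely the verification that $\lim_{R\to\infty}\tau_R=\infty$ through the control of $\|\nabla w,\nabla\theta\|_{L^{4}}$ and the Biot--Savart law, is deferred to Proposition~5.2 and enters here only as the stated hypothesis.
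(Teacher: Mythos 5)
Your proposal is correct, and its core estimate is the same as the paper's: It\^o's formula for $\|u,\theta\|_{s}^{2}$, Lemma \ref{lem2.2} together with the bound $\|\nabla u,\nabla\theta\|_{L^{\infty}}\le R$ enforced by $\tau_{R}$, the Burkholder--Davis--Gundy inequality, and a Gronwall step. You diverge from the paper in two technical places, both legitimately. First, for the Gronwall step the paper works on arbitrary stochastic intervals $[\tau_{a},\tau_{b}]\subset[0,T\wedge\xi\wedge\tau_{R})$ and invokes the stochastic Gronwall lemma of \cite{Glatt} precisely to handle the missing a priori integrability; you instead localize with $\tau_{K}=\inf\{t\ge 0:\|u,\theta\|_{s}\ge K\}$, apply the classical Gronwall inequality with a constant independent of $K$, and let $K\to\infty$ by monotone convergence. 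The two devices are interchangeable here (both implicitly require $\mathbb{E}\|u_{0},\theta_{0}\|_{s}^{2}<\infty$ for \eqref{5.1} to be meaningful); yours is more self-contained and elementary, while the paper's outsources the bookkeeping to a cited lemma. Second, for $\tau_{R}\le\xi$ the paper argues by contradiction: if $\mathbb{P}\{\tau_{R}\wedge T>\xi\}>0$ for some deterministic $T$, then on that event the supremum appearing in \eqref{5.1} dominates $\sup_{t\in[0,\xi]}\|u,\theta\|_{s}^{2}=\infty$, contradicting the finiteness just proved; you argue directly from the blow-up property of the maximal solution, namely $\sup_{t\in[0,\xi)}\|\nabla u\|_{L^{\infty}}=\infty$ on $\{\xi<\infty\}$, so your version of this step never uses \eqref{5.1}. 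This is cleaner, though it relies on reading the condition ``$\ge R$'' in Definition \ref{def2.2} as the intended ``$\ge n$'' (consistent with the construction in subsection 3.4) --- but the paper's own argument leans on exactly the same interpretation when it asserts that the $H^{s}$ norm is infinite at $\xi$. The final implication $\lim_{R\to\infty}\tau_{R}=\infty\Rightarrow\xi=\infty$ is handled identically in both arguments.
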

\begin{proof}
 Applying the It\^{o} formula to function $\|u,\theta\|_{s}^{2}$, then using Lemma \ref{lem2.2} and the H\"{o}lder inequality, we have,
\begin{align}\label{5.2}
d\|u,\theta\|_{s}^{2}+\|u\|_{s+1}^{2}dt&\leq C(1+\|\nabla u\|_{L^{\infty}}+\|\nabla \theta\|_{L^{\infty}})\|u,\theta\|_{s}^{2}dt+ C\|f\|_{L_{2}(H;H^{s})}^{2}dt\nonumber\\
&\quad+2(u,Pf)_{s}d\mathcal{W},
\end{align}
for some constant $C=C(s, \mathbb{T}^{d})$. For any fixed $0\leq \tau_{a}\leq \tau_{b}< T\wedge \xi\wedge \tau_{R}$, integrating in time, taking the integral over interval $[\tau_{a},\tau_{b}]$, using $(\ref{5.2})$, and the Burkholder-Davis-Gundy inequality, we obtain,
\begin{eqnarray*}
&&\mathbb{E}\left(\sup_{t\in[\tau_{a},\tau_{b}]}\|u,\theta\|_{s}^{2}\right)
+\mathbb{E}\int_{\tau_{a}}^{\tau_{b}}\|u\|_{s+1}^{2}dt\leq \mathbb{E}\|u(\tau_{a}),\theta(\tau_{a})\|_{s}^{2}\\&&+\mathbb{E}\int_{\tau_{a}}^{\tau_{b}}(1+\|\nabla u\|_{L^{\infty}}+\|\nabla \theta\|_{L^{\infty}})\|u,\theta\|_{s}^{2}dt
+\mathbb{E}\int_{\tau_{a}}^{\tau_{b}}\|f\|_{L_{2}(H;H^{s})}^{2}dt.
\end{eqnarray*}
Then, $(\ref{5.1})$ follows from  the stochastic Gronwall lemma given in \cite{Glatt}.

Next, we show that $\tau_{R}\leq \xi$ by a contradiction argument. Suppose not, then there exists a deterministic time $T$ such that $\mathbb{P}\{\tau_{R}\wedge T>\xi\}>0$ due to the fact $\{\tau_{R}>\xi\}=\bigcup_{T\geq 0}\{\tau_{R}\wedge T>\xi\}$. By the definition of $\xi$, we infer that,
\begin{equation*}
\sup_{t\in[0,T\wedge\tau_{R}\wedge \xi]}\|u,\theta\|_{s}^{2}+\int_{0}^{T\wedge\tau_{R}\wedge \xi}\|u\|_{s+1}^{2}dt\geq \sup_{t\in[0, \xi]}\|u,\theta\|_{s}^{2}=\infty.
\end{equation*}
Since $\mathbb{P}\{\tau_{R}\wedge T>\xi\}>0$, this leads to a contradiction with (\ref{5.1}) and hence we obtain the result.
\end{proof}

Before showing $\tau_{R}\rightarrow \infty$ as $R\rightarrow \infty$, we state a condition for the nonblow-up of solutions to stochastic ODEs, which is taken from \cite{Fang,Glatt-Holtz} and modified to our setting.
\begin{lemma}\label{lem5.1} Fix a stochastic basis $\mathcal{S}:=(\Omega,\mathcal{F}, \{\mathcal{F}_{t}\}_{t\geq 0}, \mathbb{P}, \mathcal{ W})$. Suppose that on $\mathcal{S}$ we have defined $Y$ a real valued, predictable process defined up to a time $\xi> 0$, that is,  for all bounded stopping times $\tau< \xi$, $\sup_{t\in[0,\tau]}Y< \infty $ a.s. Assume that $Y\geq 1$ and it satisfies the It\^{o} stochastic differential equation
\begin{equation*}\label{5.4}
dY+\nu Y_{1}dt=Xdt+Zd\mathcal{W}, ~~~~Y(0)=Y_{0},
\end{equation*}
on $[0,\xi)$, where $Y_{1}>0$ and $\nu$ is a positive constant, $X$ is real-valued and $Z$ is an $L_{2}$-valued predictable processes. Suppose further that there exists a stochastic process,
\begin{equation*}\label{5.5}
\sigma\in L^{1}(\Omega;L^{1}_{loc}[0,\infty)),
\end{equation*}
with $\sigma\geq 1 $ for almost every $(\omega,t)$ and an increasing collection of stopping times $\Gamma_{R}$ with $\Gamma_{R}\leq \xi$ such that,
\begin{equation*}\label{5.6}
\mathbb{P}\left(\mathop{\cap}\limits_{R>0} \{\Gamma_{R}< \xi \wedge T\}\right)=0,
\end{equation*}
 and for every fixed $R>0$, there exists a process $g(t)$, a number $r\in [0,\frac{1}{2}]$, and a constant $C$ such that,
\begin{equation*}
|X|\leq \frac{\nu}{2}Y_{1}+C(g(t)\cdot(1+\log Y)Y+\sigma), ~~~\|Z\|_{L_{2}}\leq CY^{1-r}\sigma^{r},
\end{equation*}
where the process $g$ satisfies $\mathbb{E}\int_{0}^{t}g(s)ds<C(R,T)$, for any $t\in [0, \Gamma_{R}]$. Then we have $\sup_{t\in[0,\xi\wedge T]}Y<\infty$, a.s. for each $T>0$.
\end{lemma}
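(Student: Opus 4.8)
The plan is to run a stochastic version of the Osgood/log--Gronwall argument via a double logarithmic substitution, exploiting the normalizations $Y\geq 1$ and $\sigma\geq 1$ together with the exponent constraint $r\in[0,\frac12]$. First I would apply the It\^o formula to $\log Y$, which is legitimate since $Y\geq 1>0$. Writing the drift of $dY$ as $-\nu Y_1+X$ and inserting the bound $|X|\leq\frac{\nu}{2}Y_1+C(g(1+\log Y)Y+\sigma)$, the dissipative term combines with the $\frac{\nu}{2}Y_1$ coming from $X$ to leave $-\frac{\nu}{2Y}Y_1\leq 0$, which I simply discard. Using $Y\geq 1$ to absorb $\sigma/Y\leq\sigma$ and discarding the nonpositive It\^o correction $-\frac{1}{2Y^2}\|Z\|_{L_2}^2\,dt$, this yields
\[
d(\log Y)\leq \big[Cg\,(1+\log Y)+C\sigma\big]\,dt+\frac{Z}{Y}\,d\mathcal{W}.
\]

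Next I would set $V:=\log(1+\log Y)$, which is well defined and $\geq 0$ because $\log Y\geq 0$, and apply It\^o again. Dividing the previous drift by $1+\log Y\geq 1$ collapses the supercritical factor $(1+\log Y)$ to a constant, giving a drift bounded by $Cg+C\sigma$; the second-order correction is again nonpositive and dropped. The resulting martingale $dM=\frac{Z}{(1+\log Y)Y}\,d\mathcal{W}$ has quadratic variation controlled by $\frac{\|Z\|_{L_2}^2}{Y^2}\,dt\leq C\,Y^{-2r}\sigma^{2r}\,dt\leq C\sigma\,dt$, where the last step is precisely where $Y\geq 1$, $\sigma\geq 1$ and $2r\leq 1$ are used. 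Hence
\[
dV\leq \big[Cg+C\sigma\big]\,dt+dM,\qquad d\langle M\rangle\leq C\sigma\,dt.
\]

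I would then integrate this inequality on $[0,t\wedge\Gamma_R]$ for fixed $R$, localizing the local martingale by a reducing sequence and, if necessary, restricting to the events $\{Y_0\leq k\}\uparrow\Omega$ so that $V(0)=\log(1+\log Y_0)$ is bounded. Taking the supremum over $t\in[0,T]$ and expectations, the Burkholder--Davis--Gundy inequality bounds $\mathbb{E}\sup|M|$ by $C(\mathbb{E}\int_0^{T\wedge\Gamma_R}\sigma\,ds)^{1/2}<\infty$ since $\sigma\in L^1(\Omega;L^1_{loc})$, while the drift integral is controlled by the hypothesis $\mathbb{E}\int_0^{t}g\,ds<C(R,T)$ on $[0,\Gamma_R]$ plus the same $\sigma$ term. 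After removing the localization by monotone convergence this gives $\mathbb{E}\sup_{t\in[0,T\wedge\Gamma_R]}V<\infty$, hence $\sup_{t\in[0,T\wedge\Gamma_R]}Y<\infty$ a.s. for every $R$. Finally, the assumption $\mathbb{P}(\cap_{R>0}\{\Gamma_R<\xi\wedge T\})=0$ means that a.s. there is a random $R$ with $\Gamma_R\geq\xi\wedge T$, so $\sup_{t\in[0,\xi\wedge T]}Y\leq\sup_{t\in[0,T\wedge\Gamma_R]}Y<\infty$, which is the claim.

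The main obstacle I anticipate is the rigorous treatment of the stochastic integral: ensuring the martingale term has zero expectation through a localization compatible with the a priori only pathwise control $\sup_{[0,\tau]}Y<\infty$ for $\tau<\xi$, and that its running supremum is integrable. This is exactly where the structural assumption $\|Z\|_{L_2}\leq C\,Y^{1-r}\sigma^r$ with $r\in[0,\frac12]$ is indispensable, since it converts the a priori unbounded diffusion into a quadratic variation dominated by the integrable process $\sigma$. Care is also needed to verify that both logarithmic substitutions are applied to strictly positive, bounded-below processes, so that It\^o's formula applies without further truncation.
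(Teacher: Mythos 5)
Your proof is correct, and it is essentially the argument behind this lemma in the literature: the paper itself does not prove Lemma 5.1 (it states it as "taken from [Fang--Zhang, Glatt-Holtz--Vicol] and modified to our setting"), and your double-logarithm substitution $V=\log(1+\log Y)$, the use of $Y\geq 1$, $\sigma\geq 1$, $2r\leq 1$ to dominate $d\langle M\rangle$ by $C\sigma\,dt$, the BDG bound, and the exhaustion by $\{\Gamma_R\}$ reproduce precisely the proof in Glatt-Holtz--Vicol. The only step to make fully rigorous is "removing the localization": since $Y$ need not be bounded on $[0,\Gamma_R]$ a priori, one should localize with the hitting times $\rho_n=\inf\{t:\,Y(t)\geq n\}$ and use that $V(\rho_n)=\log(1+\log n)\rightarrow\infty$ together with the uniform-in-$n$ bound on $\mathbb{E}\sup_{t\in[0,T\wedge\Gamma_R\wedge\rho_n]}V$ to conclude that a.s. $\rho_n\geq \Gamma_R\wedge T$ for some $n$, which is exactly what your monotone-convergence remark amounts to.
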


We next establish the condition which can be used to obtain $\tau_{R}\rightarrow \infty$ as $R\rightarrow \infty$ by the Lemma \ref{lem5.1}.
\begin{proposition}\label{pro5.2} Fix $s>2$, and assume that $f$ satisfies condition (\ref{2.10}). If $(u,\theta,\xi)$ is the maximal pathwise solution, then,
\begin{equation}\label{5.8}
\sup_{t\in[0,T\wedge \xi]}\|\nabla u, \nabla\theta\|_{L^{\infty}}< \infty, \mbox{a.s.}
\end{equation}
for each $T>0$.
\end{proposition}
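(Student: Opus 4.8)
The plan is to reduce \eqref{5.8} entirely to an a priori bound on $\|\nabla w\|_{L^4}$, where $w={\rm curl}\,u=\partial_1u_2-\partial_2u_1$ is the scalar vorticity, and then to close that bound through the stochastic logarithmic Gronwall Lemma \ref{lem5.1}. Taking the curl of the momentum equation in \eqref{Equ1.1} with $d=2$ and using $\nabla\cdot u=0$, the vorticity satisfies $dw+(u\cdot\nabla)w\,dt=\Delta w\,dt+\partial_1\theta\,dt+{\rm curl}\,f\,d\mathcal{W}$, while $\theta$ still obeys the transport equation $d\theta+(u\cdot\nabla)\theta\,dt=0$. Two reductions make the $L^4$ bound sufficient. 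First, by the Biot--Savart law $\nabla u$ is a zeroth--order Calder\'on--Zygmund operator applied to $w$, so $\|\nabla u\|_{W^{1,4}}\le C(\|w\|_{W^{1,4}}+\|u\|_{L^2})$, and since $W^{1,4}(\mathbb{T}^2)\hookrightarrow L^\infty$ we get $\|\nabla u\|_{L^\infty}\le C(1+\|\nabla u\|_{L^2}+\|\nabla w\|_{L^4})$; hence $\sup_{[0,\xi\wedge T]}\|\nabla w\|_{L^4}<\infty$ yields $\sup_{[0,\xi\wedge T]}\|\nabla u\|_{L^\infty}<\infty$. Second, repeating the computation that produced \eqref{3.2} for the transport equation gives $\|\nabla\theta(t)\|_{L^\infty}\le\|\nabla\theta_0\|_{L^\infty}\exp(\int_0^t\|\nabla u\|_{L^\infty}\,dr)$, so the $L^\infty$ bound on $\nabla\theta$ follows once $\int_0^{\xi\wedge T}\|\nabla u\|_{L^\infty}\,dr<\infty$.

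Before the main estimate I would record the low--order bounds that survive up to $\xi$ and supply the integrable coefficients. Since $\theta$ is transported by a divergence--free field, $\|\theta(t)\|_{L^p}\le\|\theta_0\|_{L^p}$ for every $p\in[1,\infty]$. Applying It\^o to $\|w\|_{L^2}^2$, using $2(\partial_1\theta,w)=-2(\theta,\partial_1 w)\le\|\nabla w\|_{L^2}^2+\|\theta_0\|_{L^2}^2$ to absorb the buoyancy into the dissipation, and handling the additive noise by \eqref{2.10} and the Burkholder--Davis--Gundy inequality, one obtains
\begin{equation*}
\mathbb{E}\Big(\sup_{t\in[0,\xi\wedge T]}\|w\|_{L^2}^2\Big)+\mathbb{E}\int_0^{\xi\wedge T}\|\nabla w\|_{L^2}^2\,dt<\infty,
\end{equation*}
which makes no reference to $\|\nabla u\|_{L^\infty}$ and so is valid up to $\xi$. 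In particular $\|\nabla u\|_{L^2}=\|w\|_{L^2}$ is a.s. bounded and $\int_0^{\xi\wedge T}\|w\|_{H^1}\,dt<\infty$ a.s.

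For the core estimate I would apply $\partial_k$ to the equations for $w$ and $\theta$ and use It\^o's formula for $\|\partial_k w\|_{L^4}^4$ and $\|\partial_k\theta\|_{L^4}^4$, summing over $k$. The transport terms vanish by incompressibility; the Laplacian produces the good dissipation $D:=3\sum_k\int|\partial_k w|^2|\nabla\partial_k w|^2\,dx\ge0$; the vortex--stretching terms are bounded by $C\|\nabla u\|_{L^\infty}(\|\nabla w\|_{L^4}^4+\|\nabla\theta\|_{L^4}^4)$; and the buoyancy term, after integration by parts in $x_1$ and Young's inequality, is bounded by $\tfrac12 D+C\|\nabla\theta\|_{L^4}^2\|\nabla w\|_{L^4}^2$, the first part being absorbed into the dissipation --- this exploitation of the partial vorticity diffusion is exactly the step absent in the Euler case and responsible for the extra technicalities noted in the introduction. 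Writing $Y:=e+\|\nabla w\|_{L^4}^4+\|\nabla\theta\|_{L^4}^4$ and $\sigma:=1+\|f\|_{\mathbb{W}^{2,4}}^4$, the additive noise contributes via \eqref{2.10} an It\^o correction $\le\tfrac12 Y+C\sigma$ and a martingale part $Z\,d\mathcal{W}$ with $\|Z\|_{L_2}\le CY^{3/4}\sigma^{1/4}$, so $r=\tfrac14\in[0,\tfrac12]$ fits Lemma \ref{lem5.1}. Altogether one arrives at
\begin{equation*}
dY+\nu D\,dt=X\,dt+Z\,d\mathcal{W},\qquad |X|\le\tfrac{\nu}{2}D+C\big(g(t)(1+\log Y)Y+\sigma\big).
\end{equation*}

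The hardest point, and the reason a plain embedding will not close the argument, is that $\|\nabla u\|_{L^\infty}\lesssim\|\nabla w\|_{L^4}\sim Y^{1/4}$ would render the stretching term superlinear in $Y$ and force a Riccati--type blow--up. To avoid this I would invoke a logarithmic (Kozono--Taniuchi/Brezis--Gallouet type) inequality: in two dimensions $\dot H^1\hookrightarrow{\rm BMO}$ together with the Calder\'on--Zygmund bound $\|\nabla u\|_{\rm BMO}\le C\|w\|_{\rm BMO}\le C\|w\|_{H^1}$ give
\begin{equation*}
\|\nabla u\|_{L^\infty}\le C\big(1+\|\nabla u\|_{L^2}+\|w\|_{H^1}\log(e+\|\nabla u\|_{W^{1,4}})\big)\le C\big(1+\|\nabla u\|_{L^2}+\|w\|_{H^1}\big)(1+\log Y),
\end{equation*}
so the prefactor $g(t):=C(1+\|\nabla u\|_{L^2}+\|w\|_{H^1})$ is time--integrable by the second paragraph. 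Choosing $\Gamma_R:=\xi\wedge\inf\{t:\int_0^t g\,ds\ge R\}$ (so that $\mathbb{P}(\cap_{R}\{\Gamma_R<\xi\wedge T\})=0$) and $\sigma$ as above, Lemma \ref{lem5.1} yields $\sup_{t\in[0,\xi\wedge T]}Y<\infty$ a.s., hence $\sup_{[0,\xi\wedge T]}\|\nabla w,\nabla\theta\|_{L^4}<\infty$. The two reductions of the first paragraph then give $\sup_{[0,\xi\wedge T]}\|\nabla u\|_{L^\infty}<\infty$ and, through the exponential transport bound, $\sup_{[0,\xi\wedge T]}\|\nabla\theta\|_{L^\infty}<\infty$, which is \eqref{5.8}. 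Combined with Proposition \ref{pro5.1} this forces $\tau_R\to\infty$ and thus global existence.
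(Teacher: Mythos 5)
Your proposal is correct and follows the paper's strategy in all essentials: the same vorticity reduction via the Biot--Savart law, the same transport bound for $\nabla\theta$, the same functional $Y=1+\|\nabla w\|_{L^4}^4+\|\nabla\theta\|_{L^4}^4$ with noise exponent $r=\tfrac14$, and the same closing step through the stochastic logarithmic Gronwall Lemma \ref{lem5.1}. The one genuine divergence is the treatment of the convection term in the equation for $\nabla w$: the paper integrates by parts to shift a derivative onto $u$ and absorbs the result into the dissipation, which produces the coefficient $\|u\|_{L^{\infty}}^{2}$ in $g(t)$ and therefore forces the paper to first prove the a priori bound \eqref{5.12} on $\|w\|_{L^{p}}$ for $p=2,4$ (so that $\|u\|_{L^\infty}\lesssim \|u\|_{L^2}+\|w\|_{L^4}$ is controlled on $[0,\Gamma_R]$); you instead bound this term by $\|\nabla u\|_{L^{\infty}}\|\nabla w\|_{L^4}^4$ and feed it, too, through the logarithmic inequality, so your $g(t)=C(1+\|\nabla u\|_{L^{2}}+\|w\|_{H^{1}})$ requires only the $L^{2}$ vorticity energy estimate as input --- a mild economy. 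You also replace the paper's Brezis--Wainger inequality (Engler's form, applied only in the $\nabla\theta$ estimate \eqref{5.16}) by the BMO-based Kozono--Taniuchi variant together with $\|\nabla u\|_{\rm BMO}\lesssim\|w\|_{\rm BMO}\lesssim\|w\|_{H^1}$; the two are interchangeable here, and your choice of stopping time $\Gamma_R$ in terms of $\int_0^t g\,ds$ is a correspondingly simpler way to verify the hypotheses of Lemma \ref{lem5.1}.
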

\begin{proof} In order to obtain the suitable estimates, let $w=\nabla^{\bot}\cdot u$ and $\eta=\nabla^{\bot}\theta$, where $\nabla^{\bot}=(-\partial_{2},\partial_{1})$ and then take the operator $\nabla^{\bot}$ on both sides of the system, yields,
\begin{eqnarray}\label{5.9}
\left\{\begin{array}{ll}
dw-\triangle w+u\cdot \nabla w=-\theta_{x_{1}}+ \nabla^{\bot}\cdot fd\mathcal{W},\\
d\eta+u\cdot \nabla\eta=\eta\cdot \nabla u.
\end{array}\right.
\end{eqnarray}
Note that, comparing to the three dimensional case, there is no vortex stretching term $w\cdot \nabla u$ appearing in (\ref{5.9}), which makes the global existence achievable. Here, different from the Euler equaiton, the diffusion term plays a key role in the later estimates due to the coupled construction. Multiplying the second equation in (\ref{5.9}) by $\eta|\eta|^{p-2}$ and integrating over $\mathbb{T}^{2}$ we obtain
\begin{equation*}
\frac{1}{p}\frac{d}{dt}\|\nabla\theta\|_{L^{p}}^{p}\leq \|\nabla u\|_{L^{\infty}}\|\nabla\theta\|_{L^{p}}^{p},
\end{equation*}
where we have used the cancellation property $( u\cdot \nabla v,v|v|^{p-2})=0$. Integrating with respect to time and letting $p\rightarrow\infty$, the above estimates give,
\begin{equation*}
\|\nabla\theta\|_{L^{\infty}}\leq \|\nabla\theta_{0}\|_{L^{\infty}}{\rm exp}\int_{0}^{t}\|\nabla u\|_{L^{\infty}}ds.
\end{equation*}
Moreover, the Sobolev embedding theorem and the Biot-Savart law, yield,
\begin{equation*}
\|\nabla u\|_{L^{\infty}}\leq C \|\nabla u\|_{L^{2}}+C\|\nabla w\|_{L^{4}},
\end{equation*}
where $C=C(\mathbb{T}^{2})$ is a positive constant. Therefore, the proof will be completed once we obtain the bound for $\|w\|_{L^{p}}$ for $p\geq 2$ and $\|\nabla w\|_{L^{4}}$. From the temperature equation, we easily have,
\begin{equation}\label{5.10}
\|\theta\|_{L^{p}}\leq \|\theta_{0}\|_{L^{p}}, ~~{\rm \forall} t\in [0,T],~~p\in[1,\infty].
\end{equation}
After applying the It\^{o} formula to the function $\|w\|_{L^{p}}^{p}$ for $p\geq 2$, and integrating by parts, we arrive at
\begin{eqnarray}\label{5.11}
&&\quad d\|w\|_{L^{p}}^{p}+p(p-1)\int_{\mathbb{T}^{2}}|\nabla w|^{2}|w|^{p-2}dxdt\nonumber\\
&&=-p( u\cdot \nabla w, w|w|^{p-2}) dt-p(\theta_{x_{1}}, w|w|^{p-2}) dt+\frac{p}{2}\int_{\mathbb{T}^{2}}|w|^{p-2}|\nabla^{\bot}\cdot f|^{2}dxdt\nonumber\\
&&\quad+\frac{p(p-2)}{2}\int_{\mathbb{T}^{2}}|w|^{p-4}(w\cdot(\nabla^{\bot}\cdot f))^{2}dxdt+p( w|w|^{p-2}, \nabla^{\bot}\cdot f) d\mathcal{W}\nonumber\\
&&\leq \frac{p(p-1)}{2}\int_{\mathbb{T}^{2}}|\nabla w|^{2}|w|^{p-2}dxdt+\frac{p(p-1)}{2}\int_{\mathbb{T}^{2}}\theta^{2}|w|^{p-2}dxdt\nonumber\\
&&\quad+\|w\|_{L^{p}}^{p-2}\|\nabla^{\bot}\cdot f\|_{\mathbb{W}^{0,p}}^{2}dt+p( w|w|^{p-2}, \nabla^{\bot}\cdot f) d\mathcal{W}.
\end{eqnarray}
Define the stopping time $\tau_{R}$ by
\begin{equation*}
\tau_{R}=\inf\{t\geq 0:\|w\|_{L^{p}}>R\}\wedge \xi.
\end{equation*}
From the definition of $\xi$ as the maximal time of existence of solution, it follows that $\tau_{R}\rightarrow \xi$ a.s. as $R\rightarrow \infty$. In addition, for every $T\geq 0$ and a.s. $\omega$, if $R$ is sufficiently large we have $\tau_{R}\wedge T=\xi \wedge T$.
For the stochastic term, using the Burkholder-Davis-Gundy inequality (\ref{2.4}),
\begin{eqnarray*}
&&\quad \mathbb{E}\left(\sup_{s\in[0,\tau_{R}\wedge T]}\left|\int_{0}^{s}( w|w|^{p-2}, \nabla^{\bot}\cdot f) d\mathcal{W}\right|\right)\\&& \leq \frac{1}{2}\mathbb{E}\left(\sup_{s\in[0,\tau_{R}\wedge T]}\|w\|_{L^{p}}^{p}\right)+C\mathbb{E}\int_{0}^{\tau_{R}\wedge T}\|w\|_{L^{p}}^{p}dt+C\mathbb{E}\int_{0}^{\tau_{R}\wedge T}\|\nabla^{\bot}\cdot f\|_{\mathbb{W}^{0,p}}^{p}dt.
\end{eqnarray*}
Taking the integral in time then taking the expectation in (\ref{5.11}), and using conditions (\ref{2.10}) and (\ref{5.10}), the H\"{o}lder inequality and the Gronwall inequality, we obtain,
\begin{eqnarray*}
&&\quad \mathbb{E}\left(\sup_{s\in[0,\tau_{R}\wedge T]}\|w\|_{L^{p}}^{p}\right)+\mathbb{E}\int_{0}^{\tau_{R}\wedge T}\int_{\mathbb{T}^{2}}|\nabla w|^{2}|w|^{p-2}dxdt\\
&&\leq C\mathbb{E}\|w_{0}\|_{L^{p}}^{p}+\mathbb{E}\int_{0}^{\tau_{R}\wedge T}(\|\theta\|_{L^{p}}^{p}+\|\nabla^{\bot}\cdot f\|_{\mathbb{W}^{0,p}}^{p})dt\leq C,
\end{eqnarray*}
where constant $C=C(\mathbb{T}^{2},T, \mathbb{E}\|w_{0},\theta_{0}\|_{L^{p}}^{p},p)$ is independent of $R$. We conclude that for all $R>0, p\geq 2$, $\sup_{t\in [0,\tau_{R}\wedge T]}\|w\|_{L^{p}}^{p}+\int_{0}^{\tau_{R}\wedge T}\int_{\mathbb{T}^{2}}|\nabla w|^{2}|w|^{p-2}dxdt<\infty$ a.s. Then we finally conclude that for a.s. $\omega$,
\begin{equation}\label{5.12}
\sup_{t\in [0,\xi\wedge T]}\|w\|_{L^{p}}^{p}+\int_{0}^{\xi\wedge T}\int_{\mathbb{T}^{2}}|\nabla w|^{2}|w|^{p-2}dxdt<\infty.
\end{equation}
Next, taking the operation $\nabla$ on the first equation in (\ref{5.9}), and applying the It\^{o} formula to the function $|\nabla w|^{4}$, then integrating by parts we obtain,
\begin{eqnarray*}
&&\quad d\|\nabla w\|_{L^{4}}^{4}+12\int_{\mathbb{T}^{2}}|\nabla^{2}w|^{2}|\nabla w|^{2}dxdt\\
&&=-4(\nabla(u\cdot \nabla u),\nabla w|\nabla w|^{2}) dt-4(\nabla\theta_{x_{1}},\nabla w|\nabla w|^{2}) dt
+2\int_{\mathbb{T}^{2}}|\nabla w|^{2}|\nabla\nabla^{\bot}\cdot f|^{2}dxdt\\&&\quad+4\int_{\mathbb{T}^{2}}(\nabla w\cdot (\nabla\nabla^{\bot}\cdot f))^{2}dxdt+4(\nabla \nabla^{\bot}\cdot f, \nabla w|\nabla w|^{2}) d\mathcal{W}\\
&&=(I_{1}+I_{2}+I_{3}+I_{4})dt+I_{5}d\mathcal{W}.
\end{eqnarray*}
For $I_{1}$ and $I_{2}$, after integration by parts and applying the Young inequality, we have,
\begin{eqnarray}\label{5.13}
&&~|I_{1}|\leq 3\int_{\mathbb{T}^{2}}|\nabla^{2}w|^{2}|\nabla w|^{2}dx+C\int_{\mathbb{T}^{2}}|u|^{2}|\nabla w|^{4}dx\nonumber\\&&\qquad\leq 3\int_{\mathbb{T}^{2}}|\nabla^{2}w|^{2}|\nabla w|^{2}dx+C\|u\|_{L^{\infty}}^{2}\|\nabla w\|_{L^{4}}^{4},
\end{eqnarray}
and
\begin{eqnarray}\label{5.14}
&&~|I_{2}|\leq 3\int_{\mathbb{T}^{2}}|\nabla^{2}w|^{2}|\nabla w|^{2}dx+C\int_{\mathbb{T}^{2}}|\nabla \theta|^{2}|\nabla w|^{2}dx\nonumber\\&&\qquad\leq
3\int_{\mathbb{T}^{2}}|\nabla^{2}w|^{2}|\nabla w|^{2}dx+C(\|\nabla w\|_{L^{4}}^{4}+\|\nabla\theta\|_{L^{4}}^{4}).
\end{eqnarray}
For $I_{3}$ and $I_{4}$, we can easily   get,
\begin{equation}\label{5.15}
|I_{3}+I_{4}|\leq 3(\|\nabla w\|_{L^{4}}^{4}+\|\nabla\nabla^{\bot}\cdot f\|_{\mathbb{W}^{0,4}}^{4}).
\end{equation}

In order to close the estimates, we also need a bound for $\|\nabla\theta\|_{L^{4}}^{4}$. Taking the inner product with $\eta|\eta|^{2}$, we deduce,
\begin{align}\label{5.16}
d\|\nabla\theta\|_{L^{4}}^{4}&=4(\nabla^{\bot} \theta|\nabla^{\bot} \theta|^{2},\nabla^{\bot} \theta\cdot \nabla u)dt\leq 4\|\nabla u\|_{L^{\infty}}\|\nabla\theta\|_{L^{4}}^{4}dt\nonumber\\
&\leq C(1+\|\nabla u\|_{L^{2}}+\|\nabla^{2}u\|_{L^{2}})(1+\log^{+}\|\nabla^{2}u\|_{L^{4}})\|\nabla \theta\|_{L^{4}}^{4}dt\nonumber\\
&\leq C(1+\|w\|_{L^{2}}+\|\nabla w\|_{L^{2}})(1+\log^{+}(\|\nabla w\|_{L^{4}}^{4}+\|\nabla \theta\|_{L^{4}}^{4}))\|\nabla \theta\|_{L^{4}}^{4}dt,
\end{align}
where again we have used the cancellation property $(u\cdot \nabla v,v|v|^{2})=0$ and applied the following form of the Brezis-Wainger inequality \cite{Engler},
\begin{equation*}
\|h\|_{L^{\infty}}\leq C(1+\|\nabla h\|_{L^{2}})(1+\log^{+}\|\nabla h\|_{L^{p}})^{\frac{1}{2}}+C\|h\|_{L^{2}},
\end{equation*}
for $h\in L^{2}(\mathbb{T}^{2})\cap H^{1,p}(\mathbb{T}^{2})$, which holds for $p> 2$.
Combining (\ref{5.13})-(\ref{5.16}), we have,
\begin{equation}\label{5.17}
X \leq 6\int_{\mathbb{T}^{2}}|\nabla^{2}w|^{2}|\nabla w|^{2}dx+Cg(t)(1+\log Y)Y+\sigma,
\end{equation}
where  $Y=1+\|\nabla w\|_{L^{4}}^{4}+\|\nabla\theta\|_{L^{4}}^{4}$, $g(t)=1+\|u\|_{L^{\infty}}^{2}+\|w\|_{L^{2}}+\|\nabla w\|_{L^{2}}$ and $\sigma=(1+\|\nabla\nabla^{\bot}\cdot f\|_{\mathbb{W}^{0,4}})^{4}$. For $Z=I_{5}=( \nabla\nabla^{\bot}\cdot f, \nabla w|\nabla w|^{2}) $, note that,
\begin{eqnarray}\label{5.18}
&&\|Z\|_{L_{2}}\leq |(\nabla\nabla^{\bot}\cdot f, \nabla w|\nabla w|^{2})|\leq \| \nabla\nabla^{\bot}\cdot f\|_{\mathbb{W}^{0,4}}\|\nabla w\|_{L^{4}}^{3}\nonumber\\&&\qquad\quad \leq (1+\| \nabla\nabla^{\bot}\cdot f\|_{\mathbb{W}^{0,4}})Y^{\frac{3}{4}}.
\end{eqnarray}
Define the stopping time $\Gamma_{R}$ by,
\begin{equation}\label{5.19}
\Gamma_{R}=\inf\left\{t\geq 0:\|w\|_{L^{2}}+\|w\|_{L^{4}}+\int_{0}^{t}\|\nabla w\|_{L^{2}}ds>R\right\}\wedge \xi.
\end{equation}
Obviously, $\Gamma_{R}$ is increasing in $R$ and $\mathbb{P}\left( \cap_{R}\{\Gamma_{R}< \xi
\wedge T\}\right)=0$ due to estimate (\ref{5.12}) for $p=2,4$. With (\ref{5.17})-(\ref{5.19}) established, $\sup_{t\in[0,\xi\wedge T]}(\|\nabla w\|_{L^{4}}^{4}+\|\nabla\theta\|_{L^{4}}^{4})<\infty$ follows from Lemma \ref{lem5.1} for every $T>0$. Then we have $\sup_{t\in[0,\xi\wedge T]}\|\nabla u,\nabla \theta\|_{L^{\infty}}<\infty$,
for each $T>0$, completing the proof.
\end{proof}

With Proposition \ref{pro5.2}, according to the definition of $\tau_{R}$ in Proposition 5.1, we have $\lim_{R\rightarrow \infty}\tau_{R}=\infty$. Then $\xi=\infty$, that is, $(u,\theta)$ is a global pathwise solution in the sense of Definition \ref{def2.2}. This completes the proof of Theorem \ref{the2.2}.

\bigskip

\section{Large deviation principle}\label{sec6}
\setcounter{equation}{0}

With the global existence and uniqueness of solution achieved in the previous section, we consider the large deviation principle via the weak convergence approach. Since there is no diffusion term in the temperature equation, the weak convergence is proved by a compactness argument, and we are only able to prove the large deviation principle in the nonoptimal space $\mathcal{X}$ which will be introduced later.

Define the class $\mathcal{A}$ as the set of $H_{0}$-valued predictable stochastic processes $h$ such that $\int_{0}^{T}\|h\|_{0}^{2}dt<\infty$ a.s. For any fixed $M>0$, recall the set
\begin{equation*}
S_{M}=\left\{h\in L^{2}(0,T;H_{0}):\int_{0}^{T}\|h\|_{0}^{2}dt\leq M\right\}.
\end{equation*}
The set $S_{M}$,  endowed with the weak topology $d(h,g)=\sum_{k\geq 1}\frac{1}{2^{k}}\left|\int_{0}^{T}\langle h(t)-g(t), e_{k}\rangle_{0}dt\right|$ with $\{e_{k}\}_{k\geq 1}$ being an orthonormal basis of $L^{2}(0,T;H_{0})$, is a Polish space. For $M>0$, define $\mathcal{A}_{M}=\{h\in \mathcal{A}:h(\omega)\in S_{M},a.s.\}$. For a Polish space $\mathcal{X}$, a function $I$: $\mathcal{X}\rightarrow [0,\infty]$ is called a rate function if $I$ is lower semicontinuous and is referred to as a good rate function if for each $M<\infty$, the level set $\{x\in \mathcal{X}:I(x)\leq M\}$ is compact. Then for a family $\{X^{\epsilon}\}_{\epsilon >0}$ in $\mathcal{X}$, we say that the large deviation principle (LDP) holds with rate function $I$ if the family obeys the following two conditions:\\
a. LDP lower bound: for every open set $U\subset \mathcal{X}$,
\begin{equation*}
-\inf_{x\in U} I(x) \leq \liminf_{\epsilon \rightarrow 0}\epsilon \log \mathbb{P}(X^{\epsilon} \in U),
\end{equation*}
b. LDP upper bound: for every closed set $C \subset \mathcal{X}$,
\begin{equation*}
\limsup_{\epsilon \rightarrow 0} \epsilon \log \mathbb{P}(X^{\epsilon} \in C) \leq -\inf_{x\in C}I(x).
\end{equation*}
Furthermore, $\{X^{\epsilon}\}_{\epsilon >0}$ satisfies the Laplace principle in $\mathcal{X}$ with rate function $I$ if for each real-valued, bounded and continuous function $f$, we have
\begin{equation*}
\lim_{\epsilon\rightarrow 0}\epsilon\log \mathbb{E}\left\{{\rm exp}\left[-\frac{1}{\epsilon}f(X^{\epsilon})\right]\right\}=-\inf_{x\in \mathcal{X}}\{f(x)+I(x)\}.
\end{equation*}

 For more background in this area of study we recommend \cite{Dembo, Ellis}. Since $\{X^{\epsilon}\}_{\epsilon >0}$ is a Polish space valued random process, the Laplace principle and the large deviation principle are equivalent, see \cite[Theorem 1.2.3]{Ellis}. To apply the weak convergence approach, we will use the following theorem given in \cite{Dupuis}. For examples of results on large deviations for stochastic PDEs by applying the theorem below see \cite{Duan,Millet,Sundar,Chueshov}.
\begin{theorem}{\rm \cite[Theorem 5]{Dupuis}} \label{the6.1} For Polish spaces $\mathcal{X},\mathcal{Y}$ and each $\epsilon>0$, let $\mathcal{G}^{\epsilon}:\mathcal{Y}\rightarrow \mathcal{X}$ be a measurable map and define $U^{\epsilon}:=\mathcal{G}^{\epsilon}(\sqrt{\epsilon}\mathcal{W})$ where $\mathcal{W}$ is a $Q$-Wiener process. If there is a measurable map $\mathcal{G}^{0}:\mathcal{Y}\rightarrow \mathcal{X}$ such that the following conditions hold,\\
(i) For $M<\infty$, if $h_{\epsilon}$ converges in distribution to $h$ as $S_{M}$-valued random elements, then,
\begin{eqnarray*}
\mathcal{G}^{\epsilon}\left(\sqrt{\epsilon}\mathcal{W}+\int_{0}^{\cdot}h_{\epsilon}(t)dt\right)\rightarrow \mathcal{G}^{0}\left(\int_{0}^{\cdot}hdt\right)
\end{eqnarray*}
as $\epsilon\rightarrow 0$ in distribution $\mathcal{X}$.\\
(ii) For every $M<\infty$, the set
\begin{eqnarray*}
K_{M}=\left\{\mathcal{G}^{0}\left(\int_{0}^{\cdot}hdt\right):h\in S_{M}\right\}
\end{eqnarray*}
is a compact subset of $\mathcal{X}$.
Then, family $\{U^{\epsilon}\}_{\epsilon >0}$ satisfies the large deviation principle with the rate function
\begin{eqnarray*}
I(U)=\inf_{\{h\in L^{2}(0,T;H_{0}):U=\mathcal{G}^{0}(\int_{0}^{\cdot}h(t)dt)\}}\left\{\frac{1}{2}\int_{0}^{T}\|h\|_{0}^{2}dt\right\}.
\end{eqnarray*}
\end{theorem}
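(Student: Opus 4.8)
The plan is to deduce the large deviation principle from the equivalent Laplace principle (as recalled above, on the Polish space $\mathcal{X}$ the two are equivalent), and to establish the Laplace principle through the variational representation for positive functionals of the Wiener process from \cite{Budhiraja,Dupuis}. Concretely, for a bounded continuous $f:\mathcal{X}\rightarrow\mathbb{R}$ and $U^{\epsilon}=\mathcal{G}^{\epsilon}(\sqrt{\epsilon}\mathcal{W})$, the representation gives
\[
-\epsilon\log\mathbb{E}\Big[\exp\Big(-\tfrac{1}{\epsilon}f(U^{\epsilon})\Big)\Big]
=\inf_{h\in\mathcal{A}}\mathbb{E}\left[\frac{1}{2}\int_{0}^{T}\|h(s)\|_{0}^{2}ds
+f\Big(\mathcal{G}^{\epsilon}\Big(\sqrt{\epsilon}\mathcal{W}+\int_{0}^{\cdot}h(s)ds\Big)\Big)\right].
\]
Writing $L^{\epsilon}$ for the left-hand side and $L^{0}=\inf_{x\in\mathcal{X}}\{f(x)+I(x)\}$, I would prove $L^{\epsilon}\rightarrow L^{0}$ by two one-sided bounds, and separately check that $I$ is a good rate function.

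For the inequality $\limsup_{\epsilon\rightarrow0}L^{\epsilon}\le L^{0}$, fix $\delta>0$ and choose $x_{0}$ with $f(x_{0})+I(x_{0})\le L^{0}+\delta$; by the definition of $I$ there is a \emph{deterministic} $h\in L^{2}(0,T;H_{0})$ with $\mathcal{G}^{0}(\int_{0}^{\cdot}h\,ds)=x_{0}$ and $\tfrac{1}{2}\int_{0}^{T}\|h\|_{0}^{2}ds\le I(x_{0})+\delta$. Using this fixed $h$ (which lies in some $S_{M}$ and trivially converges in distribution to itself) as an admissible control in the representation, condition (i) yields $\mathcal{G}^{\epsilon}(\sqrt{\epsilon}\mathcal{W}+\int_{0}^{\cdot}h\,ds)\rightarrow x_{0}$ in distribution; boundedness and continuity of $f$ then give $\limsup_{\epsilon}L^{\epsilon}\le\tfrac{1}{2}\int_{0}^{T}\|h\|_{0}^{2}ds+f(x_{0})\le L^{0}+2\delta$, and $\delta\downarrow0$ finishes this direction.

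The reverse inequality $\liminf_{\epsilon\rightarrow0}L^{\epsilon}\ge L^{0}$ is the heart of the argument. For each $\epsilon$ I would pick near-optimal controls $h^{\epsilon}\in\mathcal{A}$ realizing the infimum up to $\epsilon$. Since $f$ is bounded, the nonnegative energy term forces $\sup_{\epsilon}\mathbb{E}\int_{0}^{T}\|h^{\epsilon}\|_{0}^{2}ds<\infty$, so after a standard truncation the $h^{\epsilon}$ may be taken in some $\mathcal{A}_{M}$; by compactness of $S_{M}$ in the weak topology the family $\{h^{\epsilon}\}$ is tight. Passing to a subsequence and invoking the Skorokhod representation together with condition (i), one has $\mathcal{G}^{\epsilon}(\sqrt{\epsilon}\mathcal{W}+\int_{0}^{\cdot}h^{\epsilon}ds)\rightarrow\mathcal{G}^{0}(\int_{0}^{\cdot}h\,ds)$ in distribution for a limiting control $h$. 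Weak lower semicontinuity of $h\mapsto\int_{0}^{T}\|h\|_{0}^{2}ds$ (Fatou) and continuity of $f$ give $\liminf_{\epsilon}L^{\epsilon}\ge\mathbb{E}[\tfrac{1}{2}\int_{0}^{T}\|h\|_{0}^{2}ds+f(\mathcal{G}^{0}(\int_{0}^{\cdot}h\,ds))]$, and since $\tfrac{1}{2}\int_{0}^{T}\|h\|_{0}^{2}ds\ge I(\mathcal{G}^{0}(\int_{0}^{\cdot}h\,ds))$ by the very definition of $I$, the integrand is $\ge L^{0}$ pointwise, yielding $\liminf_{\epsilon}L^{\epsilon}\ge L^{0}$. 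Finally, condition (ii) shows that each level set $\{I\le M\}$, being contained in the compact set $K_{M}$ and closed by the lower semicontinuity just used, is compact, so $I$ is indeed a good rate function.

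I expect the main obstacle to be the reverse inequality: one must argue that the near-optimal \emph{random} controls can be confined to a fixed ball $S_{M}$, verify their tightness, and then pass to the limit through the nonlinear solution map $\mathcal{G}^{\epsilon}$ — precisely where hypothesis (i) (weak convergence under the controlled perturbation) and the Skorokhod construction must be applied with care, with the uniform energy bound and the weak lower semicontinuity of the cost serving as the technical linchpins. In the concrete setting of \eqref{2.12}, the remaining work is then to construct $\mathcal{G}^{\epsilon}$ and $\mathcal{G}^{0}$ and to verify (i) and (ii) for the Boussinesq dynamics, which is carried out by the tightness and compactness arguments developed in the previous sections.
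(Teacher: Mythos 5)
The paper does not prove this statement at all: Theorem 6.1 is imported verbatim, with attribution, from \cite[Theorem 5]{Dupuis}, and the paper's own contribution in Section 6 consists solely of verifying its hypotheses (i) and (ii) for the Boussinesq system (the construction of $\mathcal{G}^{\epsilon}$, $\mathcal{G}^{0}$ and Propositions 6.1--6.2). So there is no internal proof to compare against; what you have written is, in outline, the proof of the cited theorem itself, namely the Budhiraja--Dupuis weak convergence argument: equivalence of the LDP and the Laplace principle on Polish spaces, the variational representation of \cite{Budhiraja} for $-\epsilon\log\mathbb{E}[\exp(-\tfrac{1}{\epsilon}f(U^{\epsilon}))]$ (correctly stated after rescaling the control by $\sqrt{\epsilon}$), the Laplace upper bound via a deterministic near-optimal control combined with hypothesis (i), the lower bound via near-optimal random controls with the uniform energy bound forced by boundedness of $f$, truncation into $\mathcal{A}_{M}$, automatic tightness from compactness of $S_{M}$ in the weak topology, and lower semicontinuity of the cost, and finally goodness of $I$ from hypothesis (ii). This is essentially correct, with two small repairs worth recording. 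First, in the lower bound no joint convergence of $(h^{\epsilon},\mathcal{G}^{\epsilon}(\cdot))$ is needed: along the subsequence where $h^{\epsilon}\rightarrow h$ in distribution, the $f$-term genuinely converges by (i), so the $\liminf$ of the sum splits into $\liminf$ of the cost plus the limit of the $f$-term. Second, your closedness argument for the level sets of $I$ (``closed by the lower semicontinuity just used'') is circular as written: weak lower semicontinuity of the $L^{2}$-cost alone does not close the level set, since one would also need continuity of $\mathcal{G}^{0}$ along weakly convergent deterministic controls, which is not among the hypotheses. The clean fix is the identity $\{I\leq M\}=\bigcap_{\delta>0}K_{2(M+\delta)}$ (with the paper's normalization $S_{N}=\{h:\int_{0}^{T}\|h\|_{0}^{2}ds\leq N\}$ and the factor $\tfrac{1}{2}$ in $I$), which exhibits each level set as an intersection of the compact sets supplied by hypothesis (ii); compactness of the level sets, and hence also lower semicontinuity of $I$, then follow with no further argument.
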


We consider the Polish space,
\begin{eqnarray*}
&&\mathcal{X}=[\mathcal{C}([0,T];X^{s-1})\cap L^{2}(0,T;X^{s})]\times  \mathcal{C}([0,T];H^{s-1}),
\end{eqnarray*}
and let $\mathcal{B}(\mathcal{X})$ be the Borel $\sigma$-field of the Polish space $\mathcal{X}$.

Recall the stochastic Boussinesq equations given by,
\begin{eqnarray}\label{6.2}
\left\{\begin{array}{ll}
du^{\epsilon}+Au^{\epsilon}dt+P(u^{\epsilon}\cdot\nabla)u^{\epsilon}dt=P\theta^{\epsilon} e_{2}dt+\sqrt{\epsilon}Pf d\mathcal{W},\\
d\theta^{\epsilon}+(u^{\epsilon}\cdot\nabla)\theta^{\epsilon}dt=0,
\end{array}\right.
\end{eqnarray}
with initial data $U_{0}=(u_{0},\theta_{0})$. By the previous section, there exists a strong pathwise solution $U^{\epsilon}$ of system \eqref{6.2} with values in $[\mathcal{C}([0,T];X^{s})\cap L^{2}(0,T;X^{s+1})]\times \mathcal{C}([0,T];H^{s})$ for all $T>0$, and it is pathwise unique in $\mathcal{X}$. It follows that there exists a Borel-measurable function $\mathcal{G}^{\epsilon}:\mathcal{C}([0,T];H)\rightarrow \mathcal{X}$ such that $\mathcal{G}^{\epsilon}(\mathcal{W(\cdot)})=U^{\epsilon}(\cdot)$, $\mathbb{P}$ a.s.  We consider the large deviation principle for $\{U^{\epsilon}\}_{\epsilon >0}$ as $\epsilon\rightarrow 0$.
\begin{lemma}\label{lem6.1}For any $h\in \mathcal{A}_{M}$, let $\mathcal{G}^{\epsilon}\left(\sqrt{\epsilon}\mathcal{W}+\int_{0}^{\cdot}h(t)dt\right)$ be denoted by $U_{h}^{\epsilon}$. Then $U_{h}^{\epsilon}$ is the unique strong pathwise solution of
\begin{eqnarray}\label{6.3}
\left\{\begin{array}{ll}
du^{\epsilon}+Au^{\epsilon}dt+P(u^{\epsilon}\cdot\nabla)u^{\epsilon}dt=P\theta^{\epsilon} e_{2}dt+\sqrt{\epsilon}Pf d\mathcal{W}+Pf hdt,\\
d\theta^{\epsilon}+(u^{\epsilon}\cdot\nabla)\theta^{\epsilon}dt=0.
\end{array}\right.
\end{eqnarray}
with the initial data $U_{0}=(u_{0},\theta_{0})$.
\end{lemma}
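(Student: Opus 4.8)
The plan is to obtain both the existence and the uniqueness claimed in the statement by a Girsanov transformation, exploiting the global well-posedness of the uncontrolled system \eqref{6.2} already furnished by Theorem \ref{the2.2}. Fix $\epsilon\in(0,1]$ and $h\in\mathcal{A}_{M}$. Since $h(\omega)\in S_{M}$ almost surely, one has $\int_{0}^{T}\|h\|_{0}^{2}\,ds\leq M$ a.s., so the exponential
\begin{equation*}
\mathcal{E}_{t}:=\exp\left(-\frac{1}{\sqrt{\epsilon}}\int_{0}^{t}\langle h,d\mathcal{W}\rangle_{0}-\frac{1}{2\epsilon}\int_{0}^{t}\|h\|_{0}^{2}\,ds\right)
\end{equation*}
satisfies Novikov's condition trivially, its quadratic exponent being bounded by $M/(2\epsilon)$, and therefore $(\mathcal{E}_{t})$ is a genuine $\mathbb{P}$-martingale. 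I would then define the measure $d\tilde{\mathbb{P}}=\mathcal{E}_{T}\,d\mathbb{P}$, which is equivalent to $\mathbb{P}$ since $\mathcal{E}_{T}>0$ a.s. and $\mathbb{E}\mathcal{E}_{T}=1$.

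By Girsanov's theorem, under $\tilde{\mathbb{P}}$ the shifted process $\tilde{\mathcal{W}}:=\mathcal{W}+\frac{1}{\sqrt{\epsilon}}\int_{0}^{\cdot}h(s)\,ds$ is a $Q$-Wiener process relative to the same filtration. The next step is to apply the globally defined, Borel-measurable solution map $\mathcal{G}^{\epsilon}$ to $\sqrt{\epsilon}\tilde{\mathcal{W}}$: since $\tilde{\mathcal{W}}$ is a Wiener process on the stochastic basis $(\Omega,\mathcal{F},\{\mathcal{F}_{t}\},\tilde{\mathbb{P}})$, the process $\mathcal{G}^{\epsilon}(\sqrt{\epsilon}\tilde{\mathcal{W}})$ is, $\tilde{\mathbb{P}}$-a.s., the unique strong pathwise solution of \eqref{6.2} with $\mathcal{W}$ replaced by $\tilde{\mathcal{W}}$. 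Substituting $\sqrt{\epsilon}\,d\tilde{\mathcal{W}}=\sqrt{\epsilon}\,d\mathcal{W}+h\,dt$ into the momentum equation converts the noise term $\sqrt{\epsilon}Pf\,d\tilde{\mathcal{W}}$ into $\sqrt{\epsilon}Pf\,d\mathcal{W}+Pfh\,dt$, which is precisely the controlled system \eqref{6.3}. Because $\sqrt{\epsilon}\tilde{\mathcal{W}}=\sqrt{\epsilon}\mathcal{W}+\int_{0}^{\cdot}h\,ds$, this identifies $\mathcal{G}^{\epsilon}(\sqrt{\epsilon}\tilde{\mathcal{W}})=U_{h}^{\epsilon}$, and since $\tilde{\mathbb{P}}\sim\mathbb{P}$ the equation \eqref{6.3} then holds $\mathbb{P}$-a.s. as well.

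For uniqueness I would argue directly, mimicking Proposition \ref{pro3.3}: if $(u_{1},\theta_{1})$ and $(u_{2},\theta_{2})$ solve \eqref{6.3} with the same data, the difference $v=u_{1}-u_{2}$, $\eta=\theta_{1}-\theta_{2}$ now picks up the extra drift $P(f(u_{1},\theta_{1})-f(u_{2},\theta_{2}))h$. Testing with $\partial^{\alpha}v$ and using the Lipschitz bound \eqref{2.7} together with the Cauchy-Schwarz inequality, this term is controlled by $C\|h\|_{0}\|v,\eta\|_{s}^{2}$; since $\int_{0}^{T}\|h\|_{0}^{2}\,ds<\infty$, the coefficient $\|h\|_{0}$ is integrable in time, so the Gronwall inequality still closes the estimate and forces $v\equiv0$, $\eta\equiv0$. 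Alternatively, pathwise uniqueness transfers immediately from \eqref{6.2} through the equivalence $\tilde{\mathbb{P}}\sim\mathbb{P}$. The main obstacle here is one of care rather than of hard analysis: one must justify that the almost-surely-defined map $\mathcal{G}^{\epsilon}$ may legitimately be evaluated at the shifted path and that the It\^{o} integrals transform consistently under the change of basis, which is exactly the content supplied by Girsanov's theorem. I would also flag that, although $\mathcal{E}_{T}$ is finite for each fixed $\epsilon$, it is not bounded uniformly as $\epsilon\to0$, so this argument yields well-posedness of \eqref{6.3} only and must be supplemented by the direct a priori estimates of Section \ref{sec5} whenever uniform-in-$\epsilon$ bounds are required in the subsequent tightness argument.
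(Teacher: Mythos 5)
Your proposal is correct and follows exactly the route the paper takes: the paper's proof of Lemma \ref{lem6.1} consists of invoking ``the Girsanov transformation argument'' and citing Theorem 10 of \cite{Dupuis} and Lemma 4.1 of \cite{Sundar} for details, and your write-up is precisely that argument spelled out (Novikov via $h\in\mathcal{A}_M$, change of measure, transfer through $\tilde{\mathbb{P}}\sim\mathbb{P}$), with the added remark on the non-uniformity of the density as $\epsilon\to0$ matching the paper's own comment immediately after the lemma. The only cosmetic point is that since the noise in \eqref{6.3} is additive, the term $P(f(u_{1},\theta_{1})-f(u_{2},\theta_{2}))h$ in your uniqueness estimate vanishes identically, so that step is even simpler than you describe.
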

\begin{proof} The proof can be easily achieved using the Girsanov transformation argument. For details see Theorem 10 of \cite{Dupuis} or Lemma 4.1 of \cite{Sundar}.
\end{proof}

Although we obtain the well-posedness of the stochastic controlled equation note that the Girsanov density, ${{\rm exp}(\frac{1}{\epsilon}\int_{0}^{t}\|h\|_{0}^{2}dr)}$, is not uniformly bounded in $L^{2}$ as $\epsilon\rightarrow 0$. Therefore, the uniform a priori estimates which is important in the proof of weak convergence result, cannot be deduced from the corresponding ones for stochastic Boussinesq equations.

Next, we shows that the solution
\begin{eqnarray*}
U^{\epsilon}_{h}\in [L^2(\Omega;\mathcal{C}([0,T];X^{s})\cap L^{2}(0,T;X^{s+1}))]\times L^2(\Omega;\mathcal{C}([0,T];H^{s}))
\end{eqnarray*}
of system (\ref{6.3}) is bounded uniformly in $\epsilon$. The proof is quite similar to those of Propositions \ref{pro5.1} and \ref{pro5.2}. To simplify the notation, we replace $U_{h}^{\epsilon}:=(u^{\epsilon},\theta^{\epsilon})$ by $(u,\theta)$. First, we have,
\begin{eqnarray*}
 &&\quad\mathbb{E}\left(\sup_{t\in[0,T]}\|u,\theta\|_{s}^{2}\right)+\mathbb{E}\int_{0}^{T}\|u\|_{s+1}^{2}dt\\&&\leq \mathbb{E}\|U_{0}\|_{s}^{2}
 +\mathbb{E}\int_{0}^{T}(1+\|\nabla u\|_{L^{\infty}}+\|\nabla \theta\|_{L^{\infty}})\|u,\theta\|_{s}^{2}dt\\
&&\quad+C\mathbb{E}\int_{0}^{T}\|f\|_{L_{2}(H;H^{s})}^{2}dt+\mathbb{E}\int_{0}^{T}(fh,u)_{s}dt.
\end{eqnarray*}
The H\"{o}lder inequality yields,
\begin{equation}\label{6.4}
|(fh,u)_{s}|\leq \|u\|_{s}\|f\|_{L_{2}(H,H^{s})}\|h\|_{H_{0}}\leq C(\|u\|_{s}^{2}\|h\|_{H_{0}}+\|f\|_{L_{2}(H,H^{s})}^{2}\|h\|_{H_{0}}).
\end{equation}
By (6.3), we arrive at,
\begin{eqnarray*}
&&\quad\mathbb{E}\left(\sup_{t\in[0,T]}\|u,\theta\|_{s}^{2}\right)+\mathbb{E}\int_{0}^{T}\|u\|_{s+1}^{2}dt\\
&&\leq \mathbb{E}\|U_{0}\|_{s}^{2}+\mathbb{E}\int_{0}^{T}(1+\|\nabla u\|_{L^{\infty}}+\|\nabla \theta\|_{L^{\infty}}+\|h\|_{H_{0}})\|u,\theta\|_{s}^{2}dt\\
&&\quad+C\mathbb{E}\int_{0}^{T}\|f\|_{L_{2}(H;H^{s})}^{2}(1+\|h\|_{H_{0}})dt.
\end{eqnarray*}
Therefore, the result will follow if we obtain $\sup_{\epsilon\in (0,1]}\sup_{t\in[0,T]}\|\nabla u, \nabla\theta\|_{L^{\infty}}< \infty$ for any $T>0$. Like Proposition \ref{5.2}, we need to estimate $\sup_{\epsilon\in (0,1]}\sup_{t\in[0,T]}\|w\|_{L^{p}}^{p}<\infty$ for $p>2$ and $\sup_{\epsilon\in (0,1]}\sup_{t\in[0,T]}(\|\nabla w\|_{L^{4}}^{4}+\|\nabla\theta\|_{L^{4}}^{4})<\infty$, where $w:= \nabla^{\bot}\cdot u$. There is only one additional term, $(\nabla\nabla^{\bot}\cdot fh, \nabla w|\nabla w|^{2})$, and is bounded as follows,
\begin{eqnarray*}
&&|(\nabla\nabla^{\bot}\cdot fh, \nabla w|\nabla w|^{2})|\leq \|\nabla w\|^{3}_{L^{4}}\|\nabla\nabla^{\bot}\cdot f\|_{\mathbb{W}^{0,4}}\|h\|_{H_{0}}\\ &&~\qquad\qquad\qquad\qquad\qquad\leq  C(\|\nabla w\|^{4}_{L^{4}}\|h\|_{H_{0}}+\|\nabla\nabla^{\bot}\cdot f\|_{\mathbb{W}^{0,4}}^{4}\|h\|_{H_{0}}).
\end{eqnarray*}
We take $Y=1+\|\nabla w\|_{L^{4}}^{4}+\|\nabla\theta\|_{L^{4}}^{4}$, $g(t)=1+\|u\|_{L^{\infty}}^{2}+\|w\|_{L^{2}}+\|\nabla w\|_{L^{2}}+\|h\|_{H_{0}}$, $\sigma=(1+\|\nabla\nabla^{\bot}\cdot f\|_{\mathbb{W}^{0,4}}+\|\nabla\nabla^{\bot}\cdot f\|_{\mathbb{W}^{0,4}}\|h\|_{H_{0}}^{\frac{1}{4}})^{4}$ and
\begin{equation*}
\Gamma_{R}=\inf\left\{t\geq 0;\|w\|_{L^{2}}+\|w\|_{L^{4}}+\int_{0}^{t}\|\nabla w\|_{L^{2}}dr+\int_{0}^{t}\|h\|_{H_{0}}dr>R\right\}.
\end{equation*}
By one more application of Lemma \ref{lem5.1}, we have $\sup_{\epsilon\in (0,1]}\sup_{t\in[0,T]}\|\nabla u, \nabla\theta\|_{L^{\infty}}< \infty$ for any $T>0$.

Next, we give the well-posedness result to deterministic controlled equation.
\begin{lemma}\label{lem6.3} Let the initial data $(u_{0},\theta_{0}) \in X^{s}\times H^{s}$ with integer $s>2$, $h\in \mathcal{A}_{M}$. Then, for any $T>0$, $ U_{h}^{0}\in [\mathcal{C}([0,T];X^{s})\cap L^{2}(0,T;X^{s+1})]\times \mathcal{C}([0,T];H^{s})$ is the unique global solution of
\begin{eqnarray}\label{6.5}
\left\{\begin{array}{ll}
du+Audt+P(u\cdot\nabla)udt=P\theta e_{2}dt+Pf hdt,\\
d\theta+(u\cdot\nabla)\theta dt=0,
\end{array}\right.
\end{eqnarray}
with the initial data $U_{0}=(u_{0},\theta_{0})$.
\end{lemma}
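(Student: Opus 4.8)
The plan is to treat \eqref{6.5} as the deterministic (noiseless) counterpart of the controlled system \eqref{6.3}, so that the same two-stage strategy used in Sections 3--5 applies: first produce a local solution, then upgrade it to a global one by means of the a priori bounds already assembled for $U_h^\epsilon$ in the paragraphs preceding the statement. Since no stochastic integral is present and $h$ is a fixed element of $L^2(0,T;H_0)$, the analysis is in fact simpler than in the stochastic case; the forcing $Pfh$ plays the role previously played by the noise, but it enters only through the deterministic term $\int_0^{\cdot} Pfh\,ds$.

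For the local theory I would mimic the construction of Section 3: introduce the cut-off $\varphi_R(\|\nabla u\|_{L^\infty})$, approximate the momentum equation by the Galerkin projections $P_n$, and solve the transport equation for $\theta$ by the method of characteristics relative to each $u_n$, so that \eqref{3.2} again controls $\|\nabla\theta\|_{L^\infty}$ by the initial data and $\|\nabla u\|_{L^\infty}$. Because the problem is now deterministic, compactness of the approximating sequence is furnished by the classical Aubin--Lions lemma rather than by stochastic compactness and the Skorokhod theorem, and there is no need for the Gy\"ongy--Krylov argument. Uniqueness is obtained exactly as in Proposition \ref{pro3.3}: differencing two solutions $v=u_1-u_2$, $\eta=\theta_1-\theta_2$, applying $\partial^\alpha$ for $|\alpha|\le s$, using the mean value theorem on $\varphi_R$ together with Lemma \ref{lem2.2}, and closing with Gronwall. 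The density and stability argument of Section 4 then lowers the regularity threshold to the stated range $s>2$.

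The substantive point is the global extension, and here I would invoke the estimates derived just above the statement. The non-blow-up criterion of Proposition \ref{pro5.1} reduces globality to showing $\sup_{t\in[0,T]}\|\nabla u,\nabla\theta\|_{L^\infty}<\infty$; as in the proof of Proposition \ref{pro5.2}, I would pass to the vorticity $w=\nabla^{\bot}\cdot u$ and $\eta=\nabla^{\bot}\theta$, obtain $L^p$ bounds on $w$ by energy estimates on \eqref{5.11} with the stochastic terms simply absent, and then bound $\|\nabla w\|_{L^4}$ and $\|\nabla\theta\|_{L^4}$ through the logarithmic Gronwall Lemma \ref{lem5.1} applied with $Z=0$, using the Biot--Savart law and the Brezis--Wainger inequality. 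The hard part, exactly as in Proposition \ref{pro5.2}, is controlling $\|\nabla u,\nabla\theta\|_{L^\infty}$ globally in the absence of any diffusion in the temperature equation; the only genuinely new contribution relative to Section 5 is the control forcing, namely the terms $(fh,u)_s$ and $(\nabla\nabla^{\bot}\cdot fh,\nabla w|\nabla w|^2)$, and these are absorbed precisely by \eqref{6.4} and the displayed bound preceding the statement, the required integrability coming from $h\in L^2(0,T;H_0)$ together with $f\in L^4(\Omega,\mathcal{C}([0,T];\mathbb{W}^{2,4}))$. I therefore expect the proof to be brief: once the vorticity machinery and Lemma \ref{lem5.1} are in place, it amounts to noting that the uniform-in-$\epsilon$ estimates already established for $U_h^\epsilon$ specialize, after deleting the stochastic terms, to \eqref{6.5}, yielding the unique global solution in $[\mathcal{C}([0,T];X^{s})\cap L^{2}(0,T;X^{s+1})]\times\mathcal{C}([0,T];H^{s})$.
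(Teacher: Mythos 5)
Your proposal is correct in substance and shares the paper's overall skeleton (Galerkin approximation plus compactness for local existence, an energy--Gronwall argument for uniqueness, and a vorticity-based $L^4$ gradient bound for the global extension), but at several points you import stochastic machinery that the paper deliberately drops, and the paper's route is leaner. For uniqueness, the paper does not redo Proposition \ref{pro3.3}: since both solutions lie in $\mathcal{C}([0,T];X^{s})\times\mathcal{C}([0,T];H^{s})$ with $s>2$, it simply tests the difference system \eqref{6.6} against $(v,\eta)$ in $L^{2}$, uses $\|\nabla u_{1}\|_{L^{\infty}}+\|\nabla\theta_{1}\|_{L^{\infty}}\leq C\|u_{1},\theta_{1}\|_{s}$ by Sobolev embedding, and closes with the classical Gronwall inequality --- no cut-off $\varphi_{R}$, no mean value theorem on $\varphi_{R}$, and no $H^{s}$-level differencing are needed, because the cut-off does not even appear in \eqref{6.5}. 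For existence, the paper likewise avoids the two-tier $s'=s+1$ construction and the Section 4 density/stability argument you propose: in the deterministic setting the uniform bound \eqref{6.7} is obtained directly at regularity $s$ for the Galerkin solutions, and the Aubin--Lions lemma (Simon's compactness result) together with the bounds on $\|u^{n}\|_{W^{1,2}(0,T;X^{s-1})}$ and $\|\frac{d}{dt}\theta^{n}\|_{\mathcal{C}([0,T];H^{s-1})}$ yields a convergent subsequence; the cut-off was only ever needed to close expectation estimates past the stopping-time difficulties of the stochastic case. Finally, for globality the paper invokes the deterministic continuation criterion of Chae (Theorem 2.1 of \cite{Chae}), i.e.\ $\int_{0}^{T}\|\nabla u,\nabla\theta\|_{L^{\infty}}dt<\infty$, reduced to $\sup_{t\in[0,T]}\|\nabla w\|_{L^{4}}<\infty$ via the classical Gronwall inequality, rather than routing through the stochastic non-blow-up Proposition \ref{pro5.1} and Lemma \ref{lem5.1} with $Z=0$ as you suggest; your detour is valid (Lemma \ref{lem5.1} with zero noise degenerates to a deterministic logarithmic Gronwall statement) but is heavier than necessary. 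In short, your proof would go through, and it has the virtue of reusing the paper's lemmas verbatim, but the paper's proof exploits the absence of noise to shortcut every one of these steps.
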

\begin{proof} The proof is actually easier that the corresponding stochastic controlled system \eqref{6.3}, we omit the details.
\end{proof}

Let $\mathcal{D}=\left\{\int_{0}^{\cdot}h(t)dt:h\in L^{2}(0,T;H_{0})\right\}\subset \mathcal{C}([0,T];H_{0})$ and define the measurable map $\mathcal{G}^{0}: \mathcal{C}([0,T];H_{0})\rightarrow \mathcal{X}$ by $\mathcal{G}^{0}(g)=U_{h}^{0}$, where $g=\int_{0}^{\cdot}h(t)dt\in \mathcal{D}$ and $U_{h}^{0}$ is the solution to system (\ref{6.5}) and $\mathcal{G}^{0}(g)=0$ otherwise. Let $U_{h_{\epsilon}}^{\epsilon}$ be the solution of system (\ref{6.3}) with $h_{\epsilon}$ in place of $h$. Since pathwise uniqueness of strong solution holds in space $\mathcal{X}$, the Borel-measurable function $\mathcal{G}^{\epsilon}$ satisfies $\mathcal{G}^{\epsilon}\left(\sqrt{\epsilon}\mathcal{W}+\int_{0}^{\cdot}h_{\epsilon}(t)dt\right)=U_{h_{\epsilon}}^{\epsilon}$. Next we establish the weak convergence of the family $\{U^{\epsilon}_{h_{\epsilon}}\}_{\epsilon\in(0,1]}$ as $\epsilon \rightarrow 0$. Its proof uses similar ideas as in the proof of Proposition 5.3 in \cite{Millet}.
\begin{proposition}\label{pro6.1} For every $M<\infty$, let $h_{\epsilon}$ converge to $h$ in distribution as random elements taking values in $\mathcal{A}_{M}$. Then, as $\epsilon\rightarrow 0$ the solution $U_{h_{\epsilon}}^{\epsilon}$ of system (\ref{6.3}) converges in distribution in $\mathcal{X}$ to the solution $U_{h}^{0}$ of system (\ref{6.5}).
\end{proposition}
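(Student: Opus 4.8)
The plan is to argue by tightness, Skorohod representation and identification of the limit, in the spirit of \cite[Proposition~5.3]{Millet}, taking the $\epsilon$-uniform bounds just established as the starting point. Writing $(u^\epsilon,\theta^\epsilon)=U_{h_\epsilon}^\epsilon$, I would study the joint laws of the tuple $(u^\epsilon,\theta^\epsilon,h_\epsilon,\sqrt{\epsilon}\mathcal{W})$ on the path space
\[
\mathcal{Z}:=\big[\mathcal{C}([0,T];X^{s-1})\cap L^{2}(0,T;X^{s})\big]\times \mathcal{C}([0,T];H^{s-1})\times S_{M}\times \mathcal{C}([0,T];H),
\]
where $S_{M}$ carries its weak topology. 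The goal is to show this family is tight, extract an a.s.\ convergent version on a new probability space, pass to the limit in \eqref{6.3}, and recognize the limit as the unique solution of \eqref{6.5}.

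First I would prove tightness. For the velocity, Lemma \ref{lem2.3} furnishes the compact embeddings $L^{2}(0,T;X^{s+1})\cap W^{\frac14,2}(0,T;X^{s-1})\hookrightarrow\hookrightarrow L^{2}(0,T;X^{s})$ and $\mathcal{C}([0,T];X^{s})\cap \mathcal{C}^{\alpha}([0,T];X^{s-1})\hookrightarrow\hookrightarrow \mathcal{C}([0,T];X^{s-1})$. The uniform bound on $\mathbb{E}\int_{0}^{T}\|u^\epsilon\|_{s+1}^{2}dt$ controls the $L^2(X^{s+1})$ part, while the integral form of $u^\epsilon$ yields the fractional/H\"older time regularity: the deterministic drift lies in $W^{1,2}(0,T;X^{s-1})$ uniformly, since $\|Au^\epsilon\|_{s-1}\le C\|u^\epsilon\|_{s+1}$, $\|P(u^\epsilon\cdot\nabla)u^\epsilon\|_{s-1}\le C\|u^\epsilon\|_{s}^{2}$ by Lemma \ref{lem2.2}, and $\|Pf h_\epsilon\|_{s-1}\le C\|h_\epsilon\|_{0}$ with $\int_{0}^{T}\|h_\epsilon\|_{0}^{2}dt\le M$; and the stochastic part $\sqrt{\epsilon}\int_{0}^{\cdot}Pf\,d\mathcal{W}$ is $O(\epsilon)$ in both $W^{\frac14,2}(0,T;X^{s-1})$ and $\mathcal{C}^{\alpha}([0,T];X^{s-1})$ by the Burkholder--Davis--Gundy inequality \eqref{2.4}. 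For the temperature, the transport equation in \eqref{6.3} is a random ODE, so $\tfrac{d}{dt}\theta^\epsilon=-(u^\epsilon\cdot\nabla)\theta^\epsilon$ is bounded in $\mathcal{C}([0,T];H^{s-1})$ uniformly in $\epsilon$; together with the uniform $\mathcal{C}([0,T];H^{s})$ bound and Lemma \ref{lem2.3} this gives tightness of $\{\theta^\epsilon\}$ in $\mathcal{C}([0,T];H^{s-1})$. Finally $\{h_\epsilon\}$ is automatically tight, as $S_{M}$ is compact in the weak topology, and the laws of $\sqrt{\epsilon}\mathcal{W}$ concentrate at $0$.

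By Prokhorov's theorem and the Skorohod representation theorem, along a subsequence there exist on some $(\tilde{\Omega},\tilde{\mathcal{F}},\tilde{\mathbb{P}})$ random elements $(\tilde{u}^\epsilon,\tilde{\theta}^\epsilon,\tilde{h}_\epsilon,\sqrt{\epsilon}\tilde{\mathcal{W}}_\epsilon)$ with the same laws as $(u^\epsilon,\theta^\epsilon,h_\epsilon,\sqrt{\epsilon}\mathcal{W})$, converging $\tilde{\mathbb{P}}$-a.s.\ in $\mathcal{Z}$ to a limit $(\tilde{u},\tilde{\theta},\tilde{h},0)$ with $\tilde{h}$ distributed as $h$. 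Arguing as for Proposition \ref{pro3.1}(iv) via the martingale identification of \cite{Breit,DWang}, the tilded variables satisfy \eqref{6.3} with $\tilde{h}_\epsilon$ and $\tilde{\mathcal{W}}_\epsilon$, and I would then pass to the limit term by term: the a.s.\ strong convergence $\tilde{u}^\epsilon\to\tilde{u}$ in $L^{2}(0,T;X^{s})$ and in $\mathcal{C}([0,T];X^{s-1})$, with Lemma \ref{lem2.2}, handles $A\tilde{u}^\epsilon$ and the nonlinearities $P(\tilde{u}^\epsilon\cdot\nabla)\tilde{u}^\epsilon$ and $(\tilde{u}^\epsilon\cdot\nabla)\tilde{\theta}^\epsilon$; the coupling $P\tilde{\theta}^\epsilon e_{2}$ passes by the convergence of $\tilde{\theta}^\epsilon$; and the noise term vanishes since $\tilde{\mathbb{E}}\,\|\sqrt{\epsilon}\int_{0}^{\cdot}Pf\,d\tilde{\mathcal{W}}_\epsilon\|^{2}=O(\epsilon)$.

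The main obstacle is the control term $\int_{0}^{\cdot}Pf\tilde{h}_\epsilon\,ds$, because $\tilde{h}_\epsilon\to\tilde{h}$ holds only in the \emph{weak} topology of $S_{M}$, so it cannot be inserted naively. Here the regularity of $f$ is decisive: since $f$ maps $H_{0}$ into $H^{s}$ by condition \eqref{2.13}, the primitives $G_\epsilon:=\int_{0}^{\cdot}Pf\tilde{h}_\epsilon\,ds$ are bounded in $\mathcal{C}([0,T];X^{s})$ and equi-H\"older-$\tfrac12$ continuous into $X^{s-1}$ (indeed $\|\int_{r}^{t}Pf\tilde{h}_\epsilon\,ds\|_{s-1}\le C_{f}M^{1/2}|t-r|^{1/2}$), so by the compact embedding $X^{s}\hookrightarrow\hookrightarrow X^{s-1}$ and Arzel\`a--Ascoli they lie in a compact subset of $\mathcal{C}([0,T];X^{s-1})$; testing against $\phi\in(X^{s-1})^{\ast}$ and using weak convergence identifies every limit point as $G:=\int_{0}^{\cdot}Pf\tilde{h}\,ds$, whence $G_\epsilon\to G$ strongly in $\mathcal{C}([0,T];X^{s-1})$. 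Consequently $(\tilde{u},\tilde{\theta})$ solves \eqref{6.5} with control $\tilde{h}$, i.e.\ $(\tilde{u},\tilde{\theta})=\mathcal{G}^{0}(\int_{0}^{\cdot}\tilde{h}\,ds)=U_{\tilde{h}}^{0}$, which is the unique solution by Lemma \ref{lem6.3}. Since $\tilde{h}$ has the law of $h$, the limit has the law of $U_{h}^{0}$; as every subsequence admits a further subsequence with this same limiting law, the whole family $U_{h_\epsilon}^{\epsilon}$ converges in distribution in $\mathcal{X}$ to $U_{h}^{0}$, which completes the proof.
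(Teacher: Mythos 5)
Your proposal is correct and follows essentially the same route as the paper: $\epsilon$-uniform energy and time-regularity estimates, tightness on a product path space in which the controls carry the weak topology (the paper augments with $\int_0^\cdot h_\epsilon\,ds$ in $W^{1,2}(0,T;H_0)_w$ and invokes the Jakubowski--Skorohod theorem, you keep $h_\epsilon$ in the compact Polish space $S_M$ and use classical Skorohod), followed by term-by-term identification of the limit, uniqueness of the controlled solution, and the subsequence principle. The only substantive variation is your treatment of the control term, where you upgrade the a.s.\ weak convergence $\tilde h_\epsilon\rightharpoonup\tilde h$ to strong convergence of the primitives $\int_0^\cdot Pf\tilde h_\epsilon\,ds$ in $\mathcal{C}([0,T];X^{s-1})$ via equicontinuity and Arzel\`a--Ascoli, whereas the paper simply tests against $\phi$ and passes to the limit by weak convergence plus dominated convergence; both are valid and of comparable effort.
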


\begin{proof} To acquire the tightness of the probability measures, we can show that for $\alpha\in [0,\frac{1}{2})$,
\begin{eqnarray}\label{6.7*}
&&\mathbb{E}\|\theta^{\epsilon}\|^2_{W^{1,2}(0,T;H^{s-1})}\leq C,\quad \mathbb{E}\left\|\sqrt{\epsilon}\int_{0}^{t}f d\mathcal{W}\right\|^p_{C^{\alpha}([0,T];H^{s-1})}\leq C,\nonumber\\
 &&\text{and } \mathbb{E}\left\|u^{\epsilon}-\sqrt{\epsilon}\int_{0}^{t}f d\mathcal{W}\right\|^2_{W^{1,2}(0,T;H^{s-1})}\leq C,
\end{eqnarray}
where $C=C(p,M,T,s,\mathbb{T}^{2})$ is a constant independent of $\epsilon$. The eatimates can be achievable using the same argument as Lemma \ref{lem3.1} and condition (\ref{2.13}).

 Also note that $H_{0}\hookrightarrow \hookrightarrow H$ implies $W^{1,2}(0,T;H_{0}) \hookrightarrow \hookrightarrow \mathcal{C}([0,T];H)$ and $h_{\epsilon} \rightarrow h$ in distribution in $L^{2}(0,T;H_{0})$ with the weak topology implies $\int_{0}^{\cdot} h_{\epsilon}dt \rightarrow \int_{0}^{\cdot} hdt$ in distribution in $W^{1,2}(0,T;H_{0})$ with the weak topology denoted by $W^{1,2}(0,T;H_{0})_{w}$. Furthermore, by Lemma \ref{2.3}, we have
\begin{eqnarray}\label{6.13}
\left\{\begin{array}{ll}
\mathcal{C}([0,T];X^{s})\cap \mathcal{C}^\alpha([0,T];X^{s-1})\hookrightarrow\hookrightarrow \mathcal{C}([0,T];X^{s-1}), \\
W^{\alpha,2}(0,T;H^{s-1})\cap L^{2}(0,T;H^{s+1})\hookrightarrow\hookrightarrow L^{2}(0,T;H^{s}),~~{\rm for~ any}~ \alpha\in(0,1).
\end{array}\right.
\end{eqnarray}
Now using the estimates (\ref{6.7*}) and compact embedding \eqref{6.13}, a similar argument as in the proof of Lemma \ref{lem3.2} may be implemented to show the tightness of the family of distributions, $\{\mu^{\epsilon}(\int_{0}^{\cdot}h_{\epsilon}dt, u^{\epsilon},\theta^{\epsilon})\}_{\epsilon\in (0,1]}$ in
\begin{equation*}
\bar{\mathcal{X}}=[W^{1,2}(0,T;H_{0})_{w}\cap \mathcal{C}([0,T];H)]\times [\mathcal{C}([0,T];X^{s-1})\cap L^{2}(0,T;X^{s})]\times \mathcal{C}([0,T];H^{s-1}).
\end{equation*}
Thus, if $\{\epsilon_{n}\}_{n\geq 1}$ is such that $\epsilon_{n} \rightarrow 0$ then for every sequence $(\int_{0}^{\cdot}h_{\epsilon_{n}}dt, u^{\epsilon_{n}}, \theta^{\epsilon_{n}})$, there is a subsequence which we still denote as $(\int_{0}^{\cdot}h_{\epsilon_{n}}dt, u^{\epsilon_{n}}, \theta^{\epsilon_{n}})$ that converges in distribution to $(\int_{0}^{\cdot}hdt, u, \theta)$ in $\bar{\mathcal{X}}$ as $n$ approaches infinity. It remains to confirm that $(u,\theta)$ is the solution of system \eqref{6.5}. For better presentation, we denote $F^{\epsilon}(t):= \int_{0}^{t} h_{\epsilon}dr$ and $F(t):= \int_{0}^{t}hdr$.

Note that $\bar{\mathcal{X}}$ is a quasi-Polish space and hence the Jakubowski-Skorohod representation theorem is invoked to obtain a stochastic basis $(\tilde{\Omega}, \tilde{\mathcal{F}}, \tilde{\mathbb{P}})$ and $\bar{\mathcal{X}}$-valued random variables $(\tilde{F}, \tilde{u},\tilde{\theta})$, and $(\tilde{F}^{\epsilon_{n}}, \tilde{u}^{\epsilon_{n}},\tilde{\theta}^{\epsilon_{n}})$ such that in $\bar{\mathcal{X}}$, $(\tilde{F}, \tilde{u},\tilde{\theta})$ has the same distribution as $(F,u,\theta)$ and $(\tilde{F}^{\epsilon_{n}}, \tilde{u}^{\epsilon_{n}},\tilde{\theta}^{\epsilon_{n}})$ has the same distribution as $(F^{\epsilon_{n}}, u^{\epsilon_{n}}, \theta^{\epsilon_{n}})$, and $(\tilde{F}^{\epsilon_{n}}, \tilde{u}^{\epsilon_{n}},\tilde{\theta}^{\epsilon_{n}}) \rightarrow (\tilde{F}, \tilde{u},\tilde{\theta})$ $\tilde{\mathbb{P}}$ a.s. Therefore, the sequence $(\tilde{u}^{\epsilon_{n}},\tilde{\theta}^{\epsilon_{n}})$ shares the same estimates with $(u^{\epsilon_n}, \theta^{\epsilon_n})$. Namely, there exists a constant $C$  such that
\begin{equation}\label{6.14}
\tilde{\mathbb{E}}\left(\sup_{t\in[0,T]}\|\tilde{u}^{\epsilon_{n}},\tilde{\theta}^{\epsilon_{n}}\|_{s}^{2}\right)+\tilde{\mathbb{E}}\int_{0}^{T}\|\tilde{u}^{\epsilon_{n}}\|_{s+1}^2dt\leq C.
\end{equation}
By the same argument as in subsection 3.2, we obtain
\begin{eqnarray*}
\tilde{u}\in L^{2}(\tilde{\Omega};\mathcal{C}([0,T];X^{s})\cap L^{2}(0,T;X^{s+1})),
\end{eqnarray*}
and
\begin{eqnarray*}
\tilde{\theta}\in L^{2}(\tilde{\Omega};\mathcal{C}([0,T];H^{s})).
\end{eqnarray*}
We next prove that $(\tilde{u},\tilde{\theta})$ is a solution of the following system,
\begin{eqnarray}\label{6.15}
\left\{\begin{array}{ll}
d\tilde{u}+A\tilde{u}dt+P(\tilde{u}\cdot\nabla )\tilde{u}dt=P\tilde{\theta} e_{2}dt+Pf \tilde{h}dt,\\
d\tilde{\theta}+(\tilde{u}\cdot\nabla)\tilde{\theta}dt=0.
\end{array}\right.
\end{eqnarray}
Observe that for any $\phi=(\phi_1,\phi_2)\in L_{{\rm div}}^{2}(\mathbb{T}^{2})\times L^{2}(\mathbb{T}^{2})$,
\begin{align*}
&\quad\left(\tilde{u}^{\epsilon_{n}}(t)-\int_0^t-A\tilde{u}-P(\tilde u\cdot\nabla )\tilde u+P\tilde \theta e_2+Pf\tilde h dr,\phi_1\right)\\&= -\int_{0}^{t} (A\tilde{u}^{\epsilon_{n}}-A\tilde{u},\phi_1)dr - \int_{0}^{t} ((\tilde{u}^{\epsilon_{n}}\cdot \nabla)\tilde{u}^{\epsilon_{n}} - (\tilde{u}\cdot \nabla)\tilde{u}, \phi_1)dr\\
&\quad+ \int_{0}^{t} (\tilde{\theta}^{\epsilon_{n}}e_{2}-\tilde{\theta}e_{2}, \phi_1)dr + \int_{0}^{t} (f(\tilde{h}_{\epsilon_{n}}-\tilde{h}),\phi_1)dr + \sqrt{\epsilon_n}\int_{0}^{t} (f,\phi_1)d\mathcal{W}\\
&= J_{1}+J_{2}+J_{3}+J_{4}+J_{5},\\
&\quad\left(\tilde{\theta}^{\epsilon_{n}}(t)-\int_0^t(\tilde{u}\cdot \nabla)\tilde{\theta}dr,\phi_2\right)=\int_{0}^{t} ((\tilde{u}^{\epsilon_{n}} \cdot \nabla)\tilde{\theta}^{\epsilon_{n}}-(\tilde{u}\cdot \nabla)\tilde{\theta}, \phi_2)dr=I.
\end{align*}
For $J_{1}$, observe that,
\begin{equation*}
\tilde{\mathbb{E}}|J_{1}|\leq \tilde{\mathbb{E}}\int_{0}^{t}\|A\tilde{u}^{\epsilon_{n}}-A\tilde{u}\|_{L^{2}}\|\phi_1\|_{L^{2}}dr\leq \sqrt{T}\|\phi_1\|_{L^{2}}\tilde{\mathbb{E}}\left(\int_{0}^{t}\|\tilde{u}^{\epsilon_{n}}-\tilde{u}\|_{s}^{2}dr\right)^{\frac{1}{2}}.
\end{equation*}
For both $J_{2}$ and $I$, we apply the Sobolev embedding and the Cauchy-Schwarz inequality as follows,
\begin{align*}
\tilde{\mathbb{E}}|J_{2}|&\leq \tilde{\mathbb{E}}\left|\int_{0}^{t}(((\tilde{u}^{\epsilon_{n}}-\tilde{u})\cdot \nabla) \tilde{u}^{\epsilon_{n}}+(\tilde{u}\cdot\nabla)(\tilde{u}^{\epsilon_{n}}-\tilde{u}), \phi_1)dr\right|\nonumber \\
&\leq \|\phi_1\|_{L^{2}}\tilde{\mathbb{E}}\int_{0}^{t}(\|\tilde{u}^{\epsilon_{n}}\|_{s}+\|\tilde{u}\|_{s})\|\tilde{u}^{\epsilon_{n}}
-\tilde{u}\|_{s}dr\nonumber\\
&\leq\sqrt{T}\|\phi_1\|_{L^{2}}\left(\tilde{\mathbb{E}}\sup_{t\in [0,T]}(\|\tilde{u}^{\epsilon_{n}}\|_{s}^{2}
+\|\tilde{u}\|_{s}^{2})\right)^{\frac{1}{2}}\left(\tilde{\mathbb{E}}\int_{0}^{T}\|\tilde{u}^{\epsilon_{n}}-\tilde{u}\|_{s}^{2}dt
\right)^{\frac{1}{2}},
\end{align*}
and
\begin{eqnarray*}
&&\tilde{\mathbb{E}}|I|\leq \tilde{\mathbb{E}}\left|\int_{0}^{t}(((\tilde{u}^{\epsilon_{n}}-\tilde{u})\cdot \nabla) \tilde{\theta}^{\epsilon_{n}}+(\tilde{u}\cdot\nabla)(\tilde{\theta}^{\epsilon_{n}}-\tilde{\theta}), \phi_2)dr\right|\nonumber \\
&&\qquad\leq \|\phi_2\|_{L^{2}}\tilde{\mathbb{E}}\int_{0}^{t}\|\tilde{\theta}^{\epsilon_{n}}\|_{s}\|\tilde{u}^{\epsilon_{n}}-\tilde{u}\|_{s}
+\|\tilde{u}\|_{L^{\infty}}\|\tilde{\theta}^{{\epsilon}_{n}}-\tilde{\theta}\|_{s-2}dr\nonumber\\
&&\qquad\leq T\|\phi_2\|_{L^{2}}\left(\tilde{\mathbb{E}}\sup_{t\in [0,T]}
\|(\tilde{\theta}^{\epsilon_{n}},\tilde{u})\|_{s}^{2}\right)^{\frac{1}{2}}\nonumber\\ &&\qquad\quad\times
\left[\left(\tilde{\mathbb{E}}\int_{0}^{T}\|\tilde{u}^{\epsilon_{n}}-\tilde{u}\|_{s}^{2}dt\right)^{\frac{1}{2}}
+\tilde{\mathbb{E}}\left(\sup_{t\in[0,T]}
\|\tilde{\theta}^{{\epsilon}_{n}}-\tilde{\theta}\|_{s-2}^{2}\right)^{\frac{1}{2}}\right].
\end{eqnarray*}
By assumption, $\tilde{h}_{\epsilon_{n}}\rightarrow\tilde{h}$ as $n\rightarrow \infty$ in $L^{2}(0,T;H_{0})_{w}$ $\tilde{\mathbb{P}}$ a.s. Hence, $\int_{0}^{t}(\tilde{h}_{\epsilon_{n}}-\tilde{h},f^{\ast}\phi)dr\rightarrow 0$ as $n\rightarrow \infty$ and the dominated convergence theorem implies $\tilde{\mathbb{E}}|J_{4}|\rightarrow 0$ as $n\rightarrow \infty$.\
The It\^{o} isometry, the Cauchy-Schwarz inequality, and condition (\ref{2.13}) yield, as $n\rightarrow\infty$
\begin{equation*}
\tilde{\mathbb{E}}|J_{5}|\leq \sqrt{\epsilon_n}\tilde{\mathbb{E}}\left(\int_{0}^{t}
\|f\|_{L_{Q}(H_{0},H)}^{2}\|\phi_1\|_{L^{2}}^{2}dr\right)^{\frac{1}{2}}\leq \sqrt{\epsilon_n}C(T)\|\phi_1\|_{L^{2}}\rightarrow 0.
\end{equation*}
Notice that by bound (\ref{6.14}), $\int_{0}^{T}\|\tilde{u}^{\epsilon_{n}}-\tilde{u}\|_{s}^{2}dt$ and $\sup_{t\in[0,T]}\|\tilde{\theta}^{{\epsilon}_{n}}-\tilde{\theta}\|_{s-2}^{2}$ are bounded in $L^{2}(\tilde{\Omega})$, and hence are uniformly integrable. Therefore, by the Vitali convergence theorem, we have $\tilde{\mathbb{E}}|J_{i}|\rightarrow 0$ and $\tilde{\mathbb{E}}|I|\rightarrow 0$ as $n\rightarrow \infty$ for $i=1,2,3$.

Combining all these estimates, we get as $n\rightarrow \infty$,
\begin{eqnarray*}
\tilde{\mathbb{E}}\left(\tilde{u}^{\epsilon_{n}}(t)-\int_0^t-A\tilde{u}-(\tilde u\cdot\nabla )\tilde u+P\tilde \theta e_2+Pf\tilde h dr, \phi_1\right)\rightarrow 0,
\end{eqnarray*}
and
\begin{eqnarray*}
\tilde{\mathbb{E}}\left(\tilde{\theta}^{\epsilon_{n}}(t)-\int_0^t(\tilde{u}\cdot \nabla)\tilde{\theta}dr,\phi_2\right)\rightarrow 0.
\end{eqnarray*}

In addition, by bound (\ref{6.14}), the Banach-Alaoglu theorem implies,
\begin{eqnarray*}
\tilde{\mathbb{E}}\left[\sup_{t\in [0,T]}((\tilde{u}^{\epsilon_{n}}-\tilde{u}, \tilde{\theta}^{\epsilon_{n}}-\tilde\theta), \phi)\right]\rightarrow 0,~ {\rm as}~ n\rightarrow \infty.
\end{eqnarray*}
Then, we infer that $(\tilde{u},\tilde{\theta})$ is a solution of system (\ref{6.15}), $\tilde{\mathbb{P}}$ a.s. and due to the uniqueness of solutions, $(\tilde{u}, \tilde{\theta})= \tilde{U}_{h}^{0}$, $\tilde{\mathbb{P}}$ a.s. Now $(\tilde{u},\tilde{\theta})$ and $(u,\theta)$ having the same distribution in $\bar{\mathcal{X}}$ implies that $(u,\theta)$ is the solution of system (\ref{6.5}). By Lemma \ref{lem6.3}, we have $(u,\theta)\in [\mathcal{C}([0,T];X^{s})\cap L^{2}(0,T;X^{s+1})]\times \mathcal{C}([0,T];H^{s})$. Thus, for any sequence $(u^{\epsilon_{n}}, \theta^{\epsilon_{n}})$ we may extract a subsequence that converges to $(u,\theta)=U_{h}^{0}$ in distribution in $\mathcal{X}$. This implies that the family $(u^{\epsilon}, \theta^{\epsilon})$ converges to $(u,\theta)=U_{h}^{0}$ in distribution in $\mathcal{X}$ by the sub-subsequence argument.
\end{proof}

The following compactness result is another important factor which allows us to establish the large deviation principle for $U^{\epsilon}$.
\begin{proposition}\label{pro6.2} For every $M<\infty$, let $K_{M}=\{U_{h}^{0}:h\in S_{M}\}$ where $U_{h}^{0}$ is the unique solution in $\mathcal{X}$ of the system (\ref{6.5}). Then, $K_{M}$ is a compact set of $\mathcal{X}$.
\end{proposition}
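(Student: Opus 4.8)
The plan is to prove that $K_{M}$ is \emph{sequentially} compact in the metric space $\mathcal{X}$, which is equivalent to compactness. I start from an arbitrary sequence $\{U^{0}_{h_{n}}\}_{n\geq 1}\subset K_{M}$ with $h_{n}\in S_{M}$, and seek a subsequence converging in $\mathcal{X}$ to some $U^{0}_{h}\in K_{M}$. Since $S_{M}$ endowed with the weak topology of $L^{2}(0,T;H_{0})$ is a compact metrizable space, I first extract a subsequence (not relabeled) with $h_{n}\rightharpoonup h$ weakly in $L^{2}(0,T;H_{0})$; the weak lower semicontinuity of the norm guarantees $h\in S_{M}$. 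Writing $(u_{n},\theta_{n})=U^{0}_{h_{n}}$ and using $\int_{0}^{T}\|h_{n}\|_{0}^{2}\,ds\leq M$, the a priori bound \eqref{6.7} established in Lemma \ref{lem6.3} holds \emph{uniformly} in $n$:
\[
\sup_{t\in[0,T]}\|u_{n},\theta_{n}\|_{s}^{2}+\int_{0}^{T}\|u_{n}\|_{s+1}^{2}\,dt\leq C,
\]
with $C=C(s,T,\mathbb{T}^{2},\|u_{0},\theta_{0}\|_{s},M,\|f\|_{\mathcal{C}([0,T];L_{Q}(H_{0};H^{s}))})$ independent of $n$.

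Next I would establish uniform time-regularity estimates exactly as in the existence part of Lemma \ref{lem6.3}: bounding $\|u_{n}\|_{W^{1,2}(0,T;X^{s-1})}$ and $\|\tfrac{d}{dt}\theta_{n}\|_{\mathcal{C}([0,T];H^{s-1})}$ via Lemma \ref{lem2.2}, where the only new contribution is the forcing term $Pfh_{n}$, controlled by $\|f\|\,\|h_{n}\|_{0}\in L^{2}(0,T)$ uniformly through $M$. Combining these with the Aubin–Lions lemma \cite[Corollary 5]{Simon} and Lemma \ref{lem2.3}, the uniform bounds imply that $\{u_{n}\}$ is relatively compact in $\mathcal{C}([0,T];X^{s-1})\cap L^{2}(0,T;X^{s})$ and $\{\theta_{n}\}$ is relatively compact in $\mathcal{C}([0,T];H^{s-1})$. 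Passing to a further subsequence, I obtain $u_{n}\to u$ in $\mathcal{C}([0,T];X^{s-1})\cap L^{2}(0,T;X^{s})$ and $\theta_{n}\to\theta$ in $\mathcal{C}([0,T];H^{s-1})$.

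It then remains to identify the limit. I would pass to the limit in the integrated weak formulation of \eqref{6.5} tested against $\phi$: the nonlinear terms $P(u_{n}\cdot\nabla)u_{n}$ and $(u_{n}\cdot\nabla)\theta_{n}$ converge by the strong convergence just obtained (this is the deterministic counterpart of the estimates for $J_{2}$ and $I$ in the proof of Proposition \ref{pro6.1}), while the linear forcing satisfies $Pfh_{n}\rightharpoonup Pfh$ because $h\mapsto fh$ is a bounded linear, hence weakly continuous, map. Consequently $(u,\theta)$ solves \eqref{6.5} with control $h$, and the uniqueness asserted in Lemma \ref{lem6.3} forces $(u,\theta)=U^{0}_{h}\in K_{M}$. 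Since the convergence took place in the topology of $\mathcal{X}$, we conclude $U^{0}_{h_{n}}\to U^{0}_{h}$ in $\mathcal{X}$ along the subsequence, which proves the claim.

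The main obstacle will be reconciling the merely weak convergence of the controls $h_{n}$ with the strong convergence of the solutions needed to treat the quadratic nonlinearities. Strong convergence of $(u_{n},\theta_{n})$ is available only through the uniform $H^{s}$ and time-derivative bounds fed into Aubin–Lions, and the \emph{absence of diffusion} in the temperature equation means the compactness of $\{\theta_{n}\}$ can be read off only at the level $H^{s-1}$ — which is precisely why $\mathcal{X}$ is the ``nonoptimal'' space. The two points demanding genuine care are the uniform time-derivative bound for $\theta_{n}$ (where the transport structure and the loss of one derivative must be handled) and confirming that the weak limit $h$ indeed remains in $S_{M}$.
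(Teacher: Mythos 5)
Your proposal is correct and follows essentially the same route as the paper: extract a weakly convergent subsequence $h_{n}\rightharpoonup h$ in $S_{M}$, combine the uniform $H^{s}$ a priori bounds with uniform $W^{1,2}(0,T;H^{s-1})$-type time-regularity bounds (the forcing $Pfh_{n}$ controlled through $M$), invoke the compact embeddings of Lemma \ref{lem2.3}/Aubin--Lions to get strong convergence in $\mathcal{X}$, and identify the limit as $U_{h}^{0}$ by the deterministic limit-passage of Proposition \ref{pro6.1} together with the uniqueness in Lemma \ref{lem6.3}. Your explicit appeal to uniqueness to place the limit back in $K_{M}$ is left implicit in the paper but is the same logic, so no substantive difference exists.
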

\begin{proof}By Lemma \ref{lem6.3}, we have $K_{M}\subset \mathcal{X}$. Let $(u^{n},\theta^{n})$ be a sequence of solutions of system \eqref{6.5} in $K_{M}$ corresponding to controls $\{h_{n}\}_{n\geq 1}$ in $S_{M}$ given as follows,
\begin{eqnarray}\label{6.16}
\left\{\begin{array}{ll}
du^{n}+ Au^{n}dt+P(u^{n}\cdot\nabla)u^{n} dt=P\theta^{n} e_{2}dt+Pf h_{n}dt,\\
d\theta^{n}+(u^{n}\cdot\nabla) \theta^{n} dt=0.
\end{array}\right.
\end{eqnarray}
Since $S_{M}$ is a closed and bounded subset of $L^{2}(0,T;H_{0})$, then $\{h_{n}\}_{n\geq 1}$ has a subsequence, which we still denote as $\{h_{n}\}_{n\geq 1}$, that converges weakly to an element $h\in S_{M}$. Similar estimates as \eqref{6.7*} and the bound
\begin{align*}
\bigg\|\int_{0}^{t}Pf h_{n}dr\bigg\|_{W^{1,2}(0,T;H^{s-1})}^{2}&\leq C(T)\int_{0}^{T}\|f h_{n}\|_{s-1}^{2}dt\\
&\leq C(T)\int_{0}^{T}\|f\|_{L_{Q}(H_{0};H^{s-1})}^{2}\|h_{n}\|_{H_{0}}^{2}dt\leq C(T,M),
\end{align*}
imply that $(u^{n},\theta^{n})$ is bounded in $W^{1,2}(0,T;H^{s-1})$ and thus by the compact embedding \eqref{6.13}, there exists a subsequence still denoted by $(u^{n},\theta^{n})$, which converges in $\mathcal{X}$ to some element $(u,\theta)$. By a similar reasoning as in the proof of Proposition \ref{pro6.2}, it can be verified that $(u,\theta)$ is a solution to system \eqref{6.5}.
\end{proof}
With Propositions \ref{pro6.1} and \ref{pro6.2} established, Theorem \ref{the2.3} follows.

\bigskip

\section*{Acknowledgments}
Z. Qiu's research was supported by the CSC under grant No.201806160015. Y. Tang's research was supported in part by the National Natural Science Foundation under grants No.11471129. 

\bigskip

\end{document}